\pgfplotsset{compat=1.18}
\numberwithin{equation}{section}
\newtheorem{theo}{Theorem}[section]
\newtheorem{ass}[theo]{Assumption}
\newtheorem{prop}[theo]{Proposition}
\newtheorem{lem}[theo]{Lemma} 
\newtheorem{exam}[theo]{Example}
\newtheorem{defi}[theo]{Definition}
\newtheorem{rem}[theo]{Remark} 
\numberwithin{figure}{section}
\def\thm@space@setup{%
  \thm@preskip=\parskip \thm@postskip=0pt
}
\date{}
\let\originalleft\left
\let\originalright\right
\renewcommand{\left}{\mathopen{}\mathclose\bgroup\originalleft}
\renewcommand{\right}{\aftergroup\egroup\originalright}
\def\B{\mathcal{B}}
\def\C{\mathbb{C}}
\def\Cc{\mathcal{C}}
\def\D{\mathcal{D}}
\def\E{\mathcal{E}}
\def\eps{\varepsilon}
\def\F{\mathcal{F}}
\def\G{\mathbb{G}}
\def\H{\mathcal{H}}
\def\Hh{\mathscr{H}}
\def\K{\mathcal{K}}
\def\M{\mathcal{M}}
\def\N{\mathbb{N}}
\def\P{\mathcal{P}}
\def\Qq{\mathcal{Q}}
\def\R{\mathbb{R}}
\def\S{\mathcal{S}}
\def\T{\mathbb{T}}
\def\Tau{\mathcal{T}}
\def\Z{\mathbb{Z}}
\DeclareMathOperator\supp{supp}
\DeclareMathOperator{\ess}{ess}
\DeclareMathOperator{\ac}{\sigma_{ac}}
\DeclareMathOperator{\sic}{\sigma_{sc}}
\DeclareMathOperator{\Rp}{Re}
\DeclareMathOperator{\Ip}{Im}
\newcommand\con[1]{\overline{#1}}
\newcommand\innp[2]{\left\langle {#1}, {#2} \right\rangle}
\newcommand{\bra}{\langle}
\newcommand{\ket}{\rangle }
\newcommand\at[2]{\left.#1\right|_{#2}}
\DeclarePairedDelimiter\floor{\lfloor}{\rfloor}
\title{On the spectral properties of long-range perturbations of a class of block finite difference operators}
\author{Olivier Bourget\footnote{Partially supported by Fondecyt grant 1211576}  \and Angela Vargas-Mancipe \footnote{Supported by ANID Doctorado Nacional 21201703}}
\begin{document}

\maketitle

\vspace*{-0.5cm}
\noindent
Facultad de Matem\'aticas, Pontificia Universidad Cat\'olica
 de Chile, Vicu\~na Mackenna 4860, Santiago, Chile \\

\begin{abstract}

We analyze spectral properties of a family of self-adjoint first-order finite difference operators acting on $\ell^2(\Z; \C^2)$ or  $\ell^2(\Z_+; \C^2)$.
Applying the conjugate operator method, we prove the existence of limiting absorption principles and the absence of singular continuous spectrum for these operators. Our results cover classes of admissible long-range perturbations that have not been previously addressed. As illustrative examples, one-dimensional discrete Dirac operators and the Su–Schrieffer–Heeger (SSH) model are considered.

\end{abstract}

\tableofcontents

\section{Introduction}\label{intro}

The spectral analysis of block finite difference operators has attracted significant interest due to its relevance in both theoretical and applied contexts. For example, a massless two-dimensional discrete Dirac operator on a hexagonal lattice can be used to model electron transport in graphene at low excitation energies \cite{cn-g-per,n-7}. A simpler model is obtained by considering a discrete Schrödinger operator on the same lattice, see e.g., \cite{T}. The spectral properties of discrete Dirac operators have been extensively studied, see \cite{ales, b-s, b-mf-t, c-d-p2, c-i-k-s,  c-g-j,  hul, k-t,Nakamura2024} and references therein. In particular, \cite{Nakamura2024} discusses convergence of discrete Dirac operators to their continuous counterparts in $\R^n$, as the mesh size tends to zero.

Block Toeplitz and Jacobi operators provide another framework for the analysis of models arising from physics,  such as the Su-Schrieffer-Heeger (SSH) model, which is commonly used to describe polyacetylene chains \cite{SSH79}. Details on the spectral properties of these block operators can be found in \cite{B-Sah2, P.C, P.C2,  P.C-sah, sahahah} and references therein.

In scattering regimes,  the above operators do not exhibit singular continuous spectrum, while their absolutely continuous and pure point components can be effectively controlled. The natural question that arises is whether these spectral properties are stable under compact self-adjoint perturbations. A standard approach to this problem is to control the point spectrum and obtain a Limiting Absorption Principle (LAP) on appropriate subsets of the spectrum of the operator under consideration. This can be accomplished by positive commutator methods, such as the Mourre theory  \cite{abmg,EM}. This theory was applied to block Jacobi operators in  \cite{sahahah}. Later, \cite{P.C-sah} implemented the same method for block Toeplitz operators, generalizing previous results for the scalar case \cite{o-toe, B-Sah2}. For one-dimensional discrete Dirac operators, a LAP was obtained in \cite{g-h, kopylova, k-t2,  k-t} by different techniques.

In this paper, we apply the Mourre theory to examine the stability of spectral properties of certain first-order block finite difference operators under suitable compact self-adjoint perturbations. Our analysis focuses on the treatment of long-range perturbations, extending previous results for these operators. Although some of our results naturally extend to block finite difference operators on $\ell^2(\Z^d; \C^N)$ or $\ell^2(\Z^d_+; \C^N)$, we restrict our attention to the case $d=1$ and $N=2$. The corresponding extensions will be addressed in a forthcoming work. As applications, we consider the SSH model and the one-dimensional Dirac operators for both the massive and the massless cases. 

This article is organized as follows. In Section \ref{models}, we introduce the basic model and present its fundamental spectral properties. In Section \ref{Perturbations}, we introduce certain subspaces of matrix-valued sequences, which are needed to formulate our main results in Theorems   \ref{main1} and \ref{main2}. In Section \ref{connection}, we discuss the connection between our results and previous works. In particular, Proposition \ref{counterex} states that there exist long-range perturbations of the one-dimensional discrete Dirac operator that satisfy the hypotheses of our main theorems, but do not meet the conditions introduced in \cite{ g-h}. Section \ref{ReviewMourre} reviews the main features of Mourre theory. In Section~\ref{MourreEstimatesFreeModel} we construct a sequence of conjugate operators for the unperturbed operator, in the sense of the Mourre theory, based on ideas presented in \cite{g-m-2}. Furthermore, in Section~\ref{A0Gapless} we define an alternative conjugate operator in the absence of spectral gaps, which allows us to handle conical intersection of the spectral band functions. In Section \ref{admissiblesboth}, we study the classes of admissible perturbations with respect to these conjugate operators. The proofs of Theorems~\ref{main1} and \ref{main2} are developed in Section \ref{proofss}.

{\bf Notation.} Throughout this paper, ${\mathbb Z}$, ${\mathbb Z}_+$ and  ${\mathbb Z}_-$ denote the sets of integers, non-negative integers, and negative integers, respectively. If $x \in \R$, then $\floor{x}$ denotes the greatest integer less than or equal to $x$.  We also let $ \T$ be the one-dimensional torus $\T := \R /2 \pi\Z$, and $M_d(\C)$ states for the set of all $d \times d$ complex matrices. If   $T \in M_d(\C)$, then $T^*$ is its conjugate transpose.  

Given a separable complex Hilbert space ${\Hh}$, we denote by $\mathcal{B}(\Hh)$ the $C^*$-algebra of bounded linear operators acting on $\Hh$ and by $\K(\Hh)$ the ideal of compact operators. For any self-adjoint operator $H \in \mathcal{B}(\Hh)$, we write ${\mathcal E}_{\textit{p}} (H)$ for the set of its eigenvalues.  We set $E_I(H)$ for the spectral measure of $H$ on the Borel set $I \subseteq \R$. The spectrum of $H$ is denoted by $\sigma(H)$, and its essential spectrum by $\sigma_{\ess}(H)$. The sets $\ac(H)$ and $\sic(H)$ refer to the absolutely continuous and singular continuous components of $\sigma(H)$, respectively.

If $A$ is a self-adjoint operator on ${\Hh}$ with domain $\D(A)$,  we define $\bra A \ket:= \sqrt{A^2+1}$, and for $s \in \R$  we denote by $\Hh_s(A)$ the Sobolev space associated to $A$. We will identify $\Hh$ with its antidual $\Hh^*$ via the Riesz isomorphism. So, for $s \geq 0$,  $\Hh_s(A)$ is $\D( \bra A \ket ^s)$ equipped with the norm $\| \psi \|_s := \| \bra A \ket^s \psi\|$, and $\Hh_{-s}(A)$ is the completion of $\Hh$ with respect to the norm $\| \psi \|_{-s} := \| \bra A \ket^{-s} \psi\|$. If $0 \leq s \leq t$, then $\Hh_{-s}(A)$ can be identified with  $( \Hh_s(A))^{*}$ and we have the continuous embeddings
$$ \Hh_{ t}(A) \subseteq \Hh_{ s}(A) \subseteq  \Hh \subseteq \Hh_{-s}(A) \subseteq \Hh_{-t}(A),$$
and
$$ \B(\Hh) \subseteq \B \left(\Hh_s(A), \Hh_{-s}(A)\right)  \subseteq  \B \left(\Hh_t(A), \Hh_{-t}(A) \right).$$ 

We refer to \cite{abmg} for more properties of these spaces.


\section{General framework and main results}\label{models}

The aim of this section is to introduce the main results of this paper, namely Theorems \ref{main1} and \ref{main2}, and to discuss the relation with previous results in the literature (see Section \ref{connection}). We start by describing the unperturbed operator.

\subsection{The unperturbed model} \label{FreeModel}

In this section, we introduce the unperturbed operator, which plays a central role in the subsequent analysis. Let $\G \in \{\Z, \Z_{+}\}$.  We consider the Hilbert space $\H_\G := \ell^2(\G; \C^2)$, equipped with the standard inner product
\begin{align*}
\innp{u}{v}_\G& :=  \sum_{n\in \G} \bra u(n), v(n) \ket_{{\mathbb C}^2}, \quad u,v \in \H_\G.
\end{align*}
Furthermore, we define $\ell^2_{c}(\G; \C^2)$ as the subspace of $ \H_\G$ consisting of sequences with compact support.

Let $\alpha \in \R$ and $ a,b \in \C \setminus\{0\}$. The first-order finite difference operator $H_0^{(\G)}=H_0^{(\G)}(\alpha,a,b)$ on $\H_{\G}$,  determined by $\alpha, a$ and $b$ is defined as
\begin{equation}\label{DefH0}
    H_0^{(\G)}:=  \left(
\begin{array}{cc}
\alpha & \bar{a}+\bar{b}S^* \\
a+bS & -\alpha
\end{array} \right),
\end{equation}
where $S \in \B\left(\ell^2(\G)\right)$ is the right-shift operator on $\ell^2(\G) $, that is,
$$(Su)(n) := \left\{ 
\begin{array}{ll}
    0 & \mbox{if }  \G= \Z_+ \mbox{ and } n=0, \\
     u(n-1) & \mbox{otherwise},    \end{array}
\right. $$
for $u \in \ell^2(\G)$ and $n \in \G$. Clearly, $H_0^{(\G)}$ is a bounded self-adjoint operator on $\H_{\G}$.

\begin{rem}\label{rmkH1}
\begin{itemize}
    \item[(i)] The spaces $\H_{\Z_{+}}$ and $\H_{\Z_{-}}:= \ell^2(\Z_{-}; \C^2)$ are identified as subspaces of $\H_{\Z}$ such that $\H_{\Z} = \H_{\Z_{-}} \oplus \H_{\Z_{+}} $. We let $P: \H_{\Z} \to \H_{\Z_{+}}$ be the orthogonal projection of $ \H_{\Z}$ onto $ \H_{\Z_{+}}$. Consequently, $H_0^{(\Z_+)}$ can be identified with $\at{P H_0^{(\Z)} P}{\H_{\Z_{+}}} :\H_{\Z_{+}} \to \H_{\Z_{+}}  $. 
    \item[(ii)]  We could consider the first-order finite difference operator on $\H_{\G}$ given by
    $$  \left(
\begin{array}{cc}
c & \bar{a}+\bar{b}S^* \\
a+bS & d 
\end{array} \right),$$
where $a,b \in \C \setminus \{0\}$ and $c,d \in \R$.
This operator agrees with $\mu + H_0^{(\G)}(\alpha,a,b)$ with $\mu:= (c+d)/2$ and $\alpha:=(c-d)/2$. 
\end{itemize}
\end{rem}

For suitable choices of the parameters $\alpha, a$ and $b$, the operator $H_0^{(\G)}$ is of interest in quantum mechanics,  as shown by the examples below.

\begin{exam}\label{ExDirac} Let $m \geq 0$. The free one-dimensional discrete Dirac operator with mass $m$ on $\H_\G$ is defined by $D_m^{(\G)} := H_0^{(\G)}(m,1,-1).$
\end{exam}

\begin{exam}\label{ExampleTwoJacobi} 
If $\alpha=0$, $a,b>0$ and $a \neq b$, then the operator $H_0^{(\Z)}(0,a,b)$ is unitarily equivalent to the Su-Schriefer-Heeger (SSH) model introduced in \cite{SSH79}. The latter is a periodic Jacobi operator with period two on $\ell^2(\Z)$. Specifically, using the notation from \eqref{TwoPeriodic} below,
the SSH model corresponds to the operator $J_1^{(\Z)}(a,b, 0, 0)$.
\end{exam}
 
In the literature, the operators $H_0^{(\Z)}$ and $H_0^{(\Z_+)}$ are known respectively as the Laurent and Toeplitz operators associated with the symbol 
\begin{equation}\label{symbol}
    h(\theta): = \begin{pmatrix}
        \alpha & \bar{a}+\bar{b}e^{-i \theta} \\
{a}+{b}e^{i \theta} & -\alpha
    \end{pmatrix}, \quad \theta \in \T.
\end{equation}
The connection between $H_0^{(\Z)}$ and the symbol $h$  is performed by the Fourier transform, specifically the unitary operator ${\mathcal F}: \H_{\Z} \rightarrow L^2 ({\mathbb T}; \C^2 )$ defined by
\begin{equation}\label{FourierT}
({\mathcal F}u) (\theta ) := \sum_{n\in {\mathbb Z}} u(n) e^{in \theta}, \quad u \in \H_{\Z}, \ \theta \in  {\mathbb T}.
\end{equation}
Indeed, $H_0^{(\Z)} = \F^{-1} \widehat{H}_0 \F$, where $\widehat{H}_0$ is the multiplication operator on $ L^2 ({\mathbb T} ; \C^2 ) $ given by
\begin{equation}\label{MultOperator}
    \left(\widehat{H}_0f\right)( \theta) = h(\theta) f(\theta), \quad f \in  L^2 ({\mathbb T} ; \C^2 ), \ \theta \in \T.
\end{equation}
Let $\varphi_1,  \varphi_2 \in (-\pi, \pi]$ be such that  $a=|a|e^{i \varphi_1}$ and $b= |b|e^{i \varphi_2}$. We set $\varphi := \varphi_2 - \varphi_1$. For a fixed value of $\theta$, the eigenvalues of $ h(\theta)$ are given by $\pm \lambda(\theta)$, where $\lambda(\theta)>0 $ and
\begin{equation}\label{EigvSymbol}
    \lambda^2(\theta) = \alpha^2+ |a|^2 + |b|^2 +2|a||b| \cos(\theta + \varphi), \quad \theta \in \T.
\end{equation}
As  Proposition \ref{Prop1} below suggests, the nature of the spectrum of $H_0^{(\G)}$ is related to the properties of the function $\lambda$, which is non-constant because $a$ and $b$ are nonzero. One sees that $\pm \lambda(\T) = \con{I_{\pm}}$, where 
\begin{equation}\label{leftband}
     I_{\pm} : = \pm \left( \sqrt{\alpha^2+(|a|-|b|)^2}, \sqrt{\alpha^2+(|a|+|b|)^2}  \right).
\end{equation}
Furthermore, the set $ \kappa_1(\lambda):= \{ \lambda(\theta) : \theta \in \T, \lambda'(\theta)=0 \mbox{ or } \lambda'(\theta) \mbox{ does not exist}\}$ is finite and
\begin{equation}\label{kappa1}
    \kappa_1(\lambda) =  \left\{ \pm \sqrt{\alpha^2+(|a|-|b|)^2}, \pm \sqrt{\alpha^2+(|a|+|b|)^2} \right\}.
\end{equation}

Actually, $\lambda$ is not differentiable at $\theta \in \T$, (respectively, $\lambda'(\theta)=0$), if and only if $ \lambda(\theta)=0$ (resp. $\lambda$ attains  a nonzero extremum at $\theta$); see Figure \ref{fig1} for an illustration.

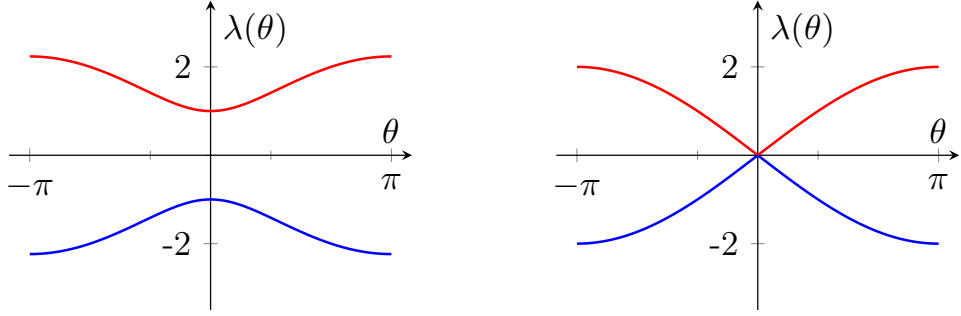
\begin{figure}[h]
    \centering
   	\begin{tikzpicture}[scale=1.2]
		
		\begin{axis}[
			width=6cm, height=5cm,
			xmin=-3.5, xmax=3.5,
			ymin=-3.5, ymax=3.5,
			axis lines=middle,
			xlabel={$\theta$}, ylabel={$\lambda(\theta)$},
			xtick={-3.1416,3.1416}, 
			xticklabels={$-\pi$,$\pi$},
			ytick={-2,2},           
			yticklabels={-2,2},
			minor x tick num=2,      
			minor y tick num=1,      
			grid=none                 
			]
			
			\addplot[red, thick, domain=-pi:pi, samples=200] 
			{sqrt(1^2 + 1^2 + 1^2 + 2*1*1*cos(deg(x + pi)))};
			
			\addplot[blue, thick, domain=-pi:pi, samples=200] 
			{-sqrt(1^2 + 1^2 + 1^2 + 2*1*1*cos(deg(x + pi)))};
			
		\end{axis}
		
		\begin{axis}[
			width=6cm, height=5cm,
			xmin=-3.5, xmax=3.5,
			ymin=-3.5, ymax=3.5,
			axis lines=middle,
			xlabel={$\theta$}, ylabel={$\lambda(\theta)$},
			xtick={-3.1416,3.1416}, 
			xticklabels={$-\pi$,$\pi$},
			ytick={-2,2},           
			yticklabels={-2,2},
			minor x tick num=2,
			minor y tick num=1,
			xshift=6cm,
			grid=none
			]
			
			\addplot[red, thick, domain=-pi:pi, samples=200] 
			{sqrt(0^2 + 1^2 + 1^2 + 2*1*1*cos(deg(x + pi)))};
			
			\addplot[blue, thick, domain=-pi:pi, samples=200] 
			{-sqrt(0^2 + 1^2 + 1^2 + 2*1*1*cos(deg(x + pi)))};
			
		\end{axis}
		
	\end{tikzpicture}

    \caption{Plot of $\theta \mapsto \lambda(\theta)$  (in red) and $\theta \mapsto -\lambda(\theta)$ (in blue) for $m=a=-b=1$ (left) and $m=0$,  $a=-b=1$ (right).}
    \label{fig1}
\end{figure}

\begin{prop}\label{Prop1} Let $ I_{\pm} $ be the intervals defined in \eqref{leftband}. Then
\begin{itemize}
\item[(i)] $ \sigma_{\ess}\left(H_0^{(\G)}\right) = \con{I_{-}} \cup \con{I_{+}}.$ There is a gap between the spectral bands $\con{I_{-}}$ and $\con{I_{+}}$ if and only if $\alpha \neq 0$ or $|a|\neq |b|$.
    \item[(ii)]  $H_0^{(\Z)}$ has purely absolutely continuous spectrum and $\sigma \left(H_0^{(\Z)} \right)= \ac \left(H_0^{(\Z)} \right)=\sigma_{\ess}\left(H_0^{(\Z)} \right) $.
    \item[(iii)] $H_0^{(\Z_+)}$ has no singular continuous spectrum. Furthermore, $\E_p\left(H_0^{(\Z_+)} \right)= \{-\alpha\}$ if   $|b|>|a|>0$, otherwise this set is empty.
\end{itemize}
\end{prop}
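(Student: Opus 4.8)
The three parts split according to whether we work on the full line, where the Fourier transform \eqref{FourierT} gives a direct-integral picture, or on the half line, where the boundary at $n=0$ must be handled. For part (ii) and the $\G=\Z$ case of (i), I would pass to $\widehat H_0 = \F H_0^{(\Z)} \F^{-1}$, the multiplication operator by $h(\theta)$ on $L^2(\T;\C^2)$ from \eqref{MultOperator}. The spectrum of a multiplication operator by a continuous self-adjoint matrix symbol is the closure of the union of its pointwise eigenvalues, so $\sigma(H_0^{(\Z)}) = \con{\lambda(\T)} \cup \con{-\lambda(\T)} = \con{I_+}\cup\con{I_-}$ directly from \eqref{EigvSymbol} and \eqref{leftband}. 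For the spectral type I would diagonalize $h(\theta)$: on the open cofinite set where $\lambda(\theta)>0$ the two eigenvalues are simple, so there is a smooth unitary $U(\theta)$ with $U(\theta)^*h(\theta)U(\theta)=\mathrm{diag}(\lambda(\theta),-\lambda(\theta))$, the finitely many degeneracy points $\lambda=0$ (possible only if $\alpha=0$ and $|a|=|b|$) being Lebesgue-null. This reduces $\widehat H_0$ to the orthogonal sum of the scalar multiplications by $\lambda$ and by $-\lambda$. Since $\lambda^2$ is a nonconstant trigonometric polynomial, $\lambda$ is real-analytic and nonconstant with finitely many critical points, so the spectral measure of any $\psi$ is the pushforward of $|\psi(\theta)|^2\,d\theta$ under $\lambda$, which has an $L^1$ density by the change-of-variables formula and is therefore purely absolutely continuous. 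This gives (ii), and the absence of eigenvalues forces $\sigma_{\ess}(H_0^{(\Z)})=\sigma(H_0^{(\Z)})$.

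For the $\G=\Z_+$ case of (i), I would use Remark \ref{rmkH1}(i) to write $H_0^{(\Z)} = \big(H_0^{(\Z_-)}\oplus H_0^{(\Z_+)}\big)+K$, where $K$ couples only the sites $n=-1,0$ through $S$ and hence is finite rank; Weyl's theorem then gives $\sigma_{\ess}(H_0^{(\Z_+)})\subseteq \sigma_{\ess}(H_0^{(\Z)})=\con{I_-}\cup\con{I_+}$. For the reverse inclusion, since $H_0^{(\Z)}$ is translation invariant and of finite range, any $E\in\con{I_-}\cup\con{I_+}=\sigma(H_0^{(\Z)})$ admits a Weyl sequence that can be translated deep into $\Z_+$ and truncated, producing approximate eigenvectors for $H_0^{(\Z_+)}$, so $E\in\sigma_{\ess}(H_0^{(\Z_+)})$. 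The gap statement is immediate from \eqref{leftband}: the bands $\con{I_-}$ and $\con{I_+}$ are disjoint exactly when $\inf\lambda=\sqrt{\alpha^2+(|a|-|b|)^2}>0$, i.e. when $\alpha\neq 0$ or $|a|\neq|b|$.

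The eigenvalue count in (iii) I would obtain by solving $H_0^{(\Z_+)}u=Eu$ row by row: $(\bar a+\bar b S^*)u_2=(E-\alpha)u_1$ and $(a+bS)u_1=(E+\alpha)u_2$. For $E\neq-\alpha$, eliminating $u_2$ and using $S^*S=I$ yields the constant-coefficient Jacobi equation $\big[(|a|^2+|b|^2)I+\bar a b\,S+a\bar b\,S^*\big]u_1=(E^2-\alpha^2)u_1$ together with the boundary condition $u_1(-1)=0$. Its characteristic roots satisfy $r_1r_2=\bar a b/(a\bar b)$, whence $|r_1||r_2|=1$; an $\ell^2(\Z_+)$ solution can retain only a root of modulus $<1$, but the boundary condition then forces $u_1\equiv 0$, so no eigenvalue occurs for any $E\neq-\alpha$. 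For $E=-\alpha$ the second row gives $(a+bS)u_1=0$, so $u_1=0$ since $a+bS$ is injective, and the first row becomes $(\bar a+\bar b S^*)u_2=0$, whose solution $u_2(n)=(-\bar a/\bar b)^n u_2(0)$ lies in $\ell^2(\Z_+)$ exactly when $|a|<|b|$. Hence $\E_p(H_0^{(\Z_+)})=\{-\alpha\}$ when $|b|>|a|>0$ and is empty otherwise; note that then $-\alpha$ sits in the open gap, since $|\alpha|<\sqrt{\alpha^2+(|a|-|b|)^2}$.

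For the absence of singular continuous spectrum, the cleanest route I see is to square the operator. Using $S^*S=I$ and $SS^*=I-P_0$, where $P_0$ is the projection onto $e_0$, a direct computation gives the block-diagonal form $\big(H_0^{(\Z_+)}\big)^2=\mathrm{diag}\big(\alpha^2+T,\ \alpha^2+T-|b|^2P_0\big)$, with $T:=(|a|^2+|b|^2)I+\bar a b\,S+a\bar b\,S^*$ a free half-line Jacobi operator. Free Jacobi operators have purely absolutely continuous spectrum, and the rank-one perturbation $-|b|^2P_0$ can only add eigenvalues (Aronszajn--Donoghue theory), so $\big(H_0^{(\Z_+)}\big)^2$ has no singular continuous part. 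Transferring back through $\lambda\mapsto\lambda^2$, which is a local diffeomorphism away from $0$, the spectral measures of $H_0^{(\Z_+)}$ push forward to those of its square, so a nontrivial singular continuous component of $H_0^{(\Z_+)}$ would produce one for the square; hence $H_0^{(\Z_+)}$ has none. The points I expect to require the most care are the boundary bookkeeping at $n=0$ in the eigenvalue equation, which is exactly what rules out embedded or resonant eigenvalues, and the justification, through the critical point of $\lambda\mapsto\lambda^2$, that absence of singular continuous spectrum passes from the square back to $H_0^{(\Z_+)}$; the measurable diagonalization of $h(\theta)$ across the degeneracy set in (ii) is a minor technical point worth stating carefully.
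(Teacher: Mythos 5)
Your treatment of parts (i), (ii) and of the eigenvalue statement in (iii) is correct and essentially the paper's own route: the paper also passes to the multiplication operator $\widehat H_0$ and uses finiteness of the critical set of $\lambda$ for (ii), handles $\G=\Z_+$ via the finite-rank decoupling $H_0^{(\Z_-)}\oplus H_0^{(\Z_+)}$ plus the Weyl criterion (Lemma \ref{FuncH0}), and disposes of the point spectrum by the transfer-matrix computation you carry out explicitly (the paper defers it to a citation). Your added details (translated Weyl sequences for the reverse inclusion, the row-by-row elimination with boundary condition $u_1(-1)=0$) are sound.

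Where you genuinely diverge from the paper is the absence of singular continuous spectrum for $H_0^{(\Z_+)}$. The paper obtains this by applying Theorems \ref{main1} and \ref{main2} with $V=0$, i.e.\ by the Mourre machinery developed later; your squaring argument, giving $\left(H_0^{(\Z_+)}\right)^2=\operatorname{diag}\left(\alpha^2+T,\ \alpha^2+T-|b|^2P_0\right)$ with $T$ the free half-line Jacobi operator, is a self-contained alternative that avoids the paper's forward reference, and your transfer of spectral types through $x\mapsto x^2$ is correct (pushforwards of a.c.\ measures stay a.c.\ since preimages of null sets under $x\mapsto x^2$ are null, and pushforwards of singular continuous measures stay singular continuous).

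However, the key step of that alternative is wrong as stated. Aronszajn--Donoghue theory does \emph{not} say that a rank-one perturbation of an operator with purely absolutely continuous spectrum ``can only add eigenvalues.'' It says that the singular parts at different couplings are mutually singular and are supported on the set where the Borel transform $F(x+i0)$ of the unperturbed spectral measure is real and equals $-1/t$; whether that singular part is atomic or continuous depends on whether $\int (x-y)^{-2}\,d\mu_0(y)$ is finite there. In general, rank-one perturbations of purely a.c.\ operators \emph{can} produce singular continuous spectrum (del Rio--Makarov--Simon constructed such examples, with the a.c.\ spectrum supported on a fat Cantor set). So your appeal to a general principle fails. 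The conclusion is nevertheless true in your concrete situation, and the repair stays within your strategy: $T-|b|^2P_0$ is a Jacobi matrix differing from the free one in a single diagonal entry, so for energies in the open band all solutions of the difference equation coincide with free solutions for $n\geq 1$, are bounded, and none is subordinate; subordinacy theory then gives purely a.c.\ spectrum in the open band, while outside the closed band only finitely many discrete eigenvalues can occur, and the two band edges are a null set for any continuous measure. Equivalently, one can note that the Borel transform of the free spectral measure (a semicircle-type weight) has real boundary values only off the open band, where it is finite and strictly monotone on each component, so the set $\{F(x+i0)=-1/t\}$ is finite and cannot support a continuous singular measure. With either repair, your proof of (iii) is complete and independent of the paper's Sections \ref{ReviewMourre}--\ref{proofss}.
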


\begin{proof} We first prove (i) for $\G= \Z$ and (ii). Since $H_0^{(\Z)}$ is unitarily equivalent to the multiplication operator $\widehat{H}_0$ defined in \eqref{MultOperator}, and $\kappa_1(\lambda)$ is finite, the spectrum of $H_0^{(\Z)} $ is purely absolutely continuous and
   $$\sigma\left(H_0^{(\Z)}\right)= \ac\left(H_0^{(\Z)}\right)=\sigma_{\ess}\left(H_0^{(\Z)} \right) = -\lambda(\T) \cup \lambda(\T) = \con{I_{-}} \cup \con{I_{+}}.$$
Now, by Remark \ref{rmkH1} and Lemma \ref{FuncH0} below, $H_0^{(\Z_-)} \oplus H_0^{(\Z_+)}$ differs from $H_0^{(\Z)}$ by a finite-rank operator. Thus, the proof of (i) for $\G = \Z_{+}$ follows from the Weyl criterion.

Finally, we prove (iii). By a transfer matrix argument, we can show that $ H_0^{(\Z_+)}$ admits eigenvalues if and only if $|b|>|a|>0$, and in this situation the only eigenvalue is $-\alpha$ (see also \cite{P.C2}). The absence of singular continuous spectrum of $H_0^{(\Z_+)}$ follows from  Theorems \ref{main1} and \ref{main2}  below (see also \cite{rod}). This concludes the proof.
\end{proof}

\begin{rem}
In the SSH model, there is always a gap between $I_{-}$ and $I_{+}$. For the discrete one-dimensional  Dirac operator, a spectral gap exists as long as $m > 0$.
\end{rem}

In Section \ref{MainTheorems}, we present our main results regarding the spectral properties of the perturbed operator $ H^{(\G)}:=H_0^{(\G)}+V$, under the assumption that $V$ satisfies conditions related to the subspaces of bounded matrix-valued sequences on $\G$ introduced in the next section.

\subsection{Some subspaces of \texorpdfstring{$\ell^\infty(\G; M_d(\C))$}{l-infty(G; M	extunderscore d(C))}}\label{Perturbations}

In the sequel, for $d \in \{1,2\}$ we let $\| \cdot \|$ denote a matrix norm on the $\C$-vector space $M_d(\C)$, and we identify any  sequence $W: \G \to  M_d(\C)$ with the multiplication operator on $\ell^2(\G; M_d(\C))$ defined by
\begin{align*}
    \D(W) &:= \left\{ u \in \ell^2(\G; M_d(\C)) : (W(n)u(n))_{n \in \G} \in \ell^2(\G; M_d(\C)) \right\}\\
    (Wu)(n)&:=W(n)u(n), \quad u \in  \D(W).
\end{align*}
Furthermore, for $p \in \Z_+ \setminus \{0\}$, we define the sequences $\tau^p W$ and $\tau^{-p} W$ by
$$
(\tau^p W)(n) := \left\{ 
\begin{array}{ll}
0     & \mbox{if } \G= \Z_{+} \mbox{ and } n<p,\\
W(n-p) & \mbox{otherwise},
\end{array}
\right.
$$
and $(\tau^{-p} W)(n) := W(n+p)$, for $ n \in \G$. If $W \in  \ell^\infty(\G;M_2(\C))$, we write $W= (W^{ij})_{i,j=1,2}$, and $W^*$  is the sequence $((W(n))^*)_{n \in \G}$.

Now, let $k \in \Z_+ \setminus \{0\}$ and $m \in \{1,2\}$. We consider the seminorms on $\ell^\infty(\G;M_2(\C))$ given by
\begin{align*}
    q_{k,1} (W) &:= \sup_{n \in \G} \|n ( W - \tau^k W)(n) \|, \\
    q_{k,2} (W) &:= \sup_{n \in \G} \|n^2 ( W - 2\tau^k W + \tau^{2k} W)(n) \|.
\end{align*} 
If $W \in \ell^\infty(\G;M_2(\C))$, we let $q_0( W) \in \ell^\infty(\G)$ be the sequence
\begin{equation}\label{seqW0}
    { q_0(W)}(n)  = \|W^{12}(n)\| + \|W^{21}(n) \| + \|  (W^{11}-W^{22})(n)\| + \| (\tau W^{22}-W^{11})(n)\|.
\end{equation} 
Within this framework, we define five linear subspaces of   $\ell^\infty\left(\G; M_2(\C)\right)$, starting with 
\begin{align}
\Qq_{0,1}(\G)&:= \left\{ W \in \ell^\infty\left(\G; M_2(\C)\right):  \sup_{n \in \G} |n|(q_0(W))(n)< \infty \right\}, \label{Q0set} \\
\Qq_{k,m}(\G)&:= \left\{ W \in \ell^\infty(\G; M_2(\C)): \sum_{j=1}^m q_{k,j}(W)< \infty  \right\}.\label{Qkset}
\end{align}
Next, for $0<\beta <\gamma <  \infty$ fixed, we also define 
\begin{align}
   \S(\G) & := \left\{ W \in \ell^\infty(\G; M_2(\C)):  \int_{1}^\infty \sup_{ \beta r < |n| <  \gamma r} \| W(n)\| \, dr < \infty  \right\}, \label{short}\\
    \M_0(\G) &:=  \left\{ W \in \ell^\infty(\G; M_2(\C)):  \int_{\frac{1}{\gamma - \beta}}^\infty \sup_{ \beta r< |n| < \gamma r} q_0(W)(n) \, dr < \infty\right\}, \label{medium0} \\
    \M_{k}(\G) & :=  \left\{ W \in \ell^\infty(\G; M_2(\C)):  \int_{1}^\infty \sup_{ \beta r< |n| < \gamma r} \| (\tau^k W-W)(n)\|  \, dr < \infty\right\}. \label{mediumk} 
\end{align}
Here, we adopt the convention that the supremum over the empty set is zero. 

\begin{rem}
    By Lemma~\ref{AppendixLemma}, the definition of the sets $\S(\G) $ and $ \M_{k}(\G)$, for $k \geq 0$, does not depend on the specific choice of  $\beta$ and $\gamma$.
\end{rem}

\subsection{Main results}\label{MainTheorems}
In this section, we specify the assumptions on the perturbation $V$ of $H_0^{(\G)}$. In our framework, we always assume that $V  \in \B(\H_\G)$ has the form
\begin{equation}\label{GeneralPotential}
V := V_0+ \sum_{j=1}^N \left(S^jV_j + V_j^*S^{*j}\right),  
\end{equation}
with $ V_j \in \ell^\infty(\G ;M_2(\C))$, $V_0 = V_0^*$ and $\lim_{|n| \to \infty} \| V_j(n)\|=0$ for $j=0,1, \ldots, N$. This means that $V$ is self-adjoint and $V \in \K(\H_{\G})$. 

With the notation introduced in Section \ref{Perturbations}, we formulate a first assumption on $V$.

\begin{ass}\label{A1}  $V \in \K(\H_\G)$ is a self-adjoint operator as in \eqref{GeneralPotential}, and  there exists $k \in \Z_{+}$, with $k>0$, such that  
    $V_j \in \S(\G) + \M_k(\G)  + \Qq_{k,2}(\G)$  for each $j=0,1, \ldots, N.$
\end{ass}

For the remainder of this paper, we let $X^{(\G)}$ be the position operator  on $\H_{\G}$ defined by
\begin{equation}\label{PositionOpt}
    \left(X^{(\G)}u \right)(n) := nu(n) \mbox{ for } u \in \D\left(X^{(\G)}\right) \mbox{ and } n \in \G,
\end{equation}
where $ \D\left(X^{(\G)}\right):= \left\{ u \in \H_\G : (n u(n))_{n \in \G} \in \H_\G \right\}.$ Furthermore, for each $s \in \R$, we write $\mathscr{H}_s\left(X^{(\G)} \right)$ for the Sobolev space associated with
$X^{(\G)}$, so that  $\mathscr{H} := \H_\G$. Our first result reads as follows.

\begin{theo}\label{main1} Suppose that $V$ satisfies Assumption \ref{A1} for some $k>0$. Let $H^{(\G)}:=H_0^{(\G)}+V$ and consider the sets 
\begin{align}\label{mukThm2.1}
    \mu_k\left(H_0^{(\G)}\right)&:=(I_+ \cup I_{-}) \Big \backslash \kappa_k\left(H_0^{(\G)}\right), \\
    \tilde{\kappa}_k\left(H_0^{(\G), V}\right) &:= \kappa_k\left(H_0^{(\G)}\right) \cup \E_p\left( H_0^{(\G)}\right) \cup \E_p\left(H^{(\G)}\right),
\end{align}
  where
\begin{equation}\label{CriPointsAk}
     \kappa_k\left(H_0^{(\G)}\right):= \left\{ \pm 
     \sqrt{\alpha^2+|a|^2+|b|^2+2|a| |b| \cos\left( \frac{\pi j}{k} \right)} : j=0,1,\ldots,k\right\}.
\end{equation}
Then $ \sigma_{\ess} \left(H^{(\G)}\right) = \sigma_{\ess} \left(H_0^{(\G)}\right)$ and the following assertions hold:
\begin{enumerate}
    \item[1.] For every compact subset $I$ of $\R$ with $I  \subseteq \mu_k\left(H_0^{(\G)}\right)$, the set  
     $\E_p\left(H^{(\G)}\right) \cap I$ is finite, and each of these eigenvalues has finite multiplicity. The possible accumulation points of $ \E_p\left(H^{(\G)}\right)$ belong to $ \kappa_k\left(H_0^{(\G)}\right)$. 
    \item[2.]  For all  $s>1/2$ and $\K := \mathscr{H}_s \left(X^{(\G)} \right)$ the following LAP is satisfied: the holomorphic map $ \C_{\pm} \ni z \mapsto \left(H^{(\G)}-z\right)^{-1} \in \B\left( \K, \K^* \right)$ extends to a weak$^*$-continuous map on  $\R \setminus \tilde{\kappa}_k\left(H^{(\G)}\right)$. In particular,  $\sic\left(H^{(\G)}\right) = \emptyset$.
\end{enumerate}

\end{theo}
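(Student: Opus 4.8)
The plan is to verify the hypotheses of the abstract Mourre theory (reviewed in Section~\ref{ReviewMourre}) for the pair $\bigl(H^{(\G)}, A\bigr)$, where $A$ is a suitable conjugate operator constructed from the position operator $X^{(\G)}$. The statement $\sigma_{\ess}\bigl(H^{(\G)}\bigr)=\sigma_{\ess}\bigl(H_0^{(\G)}\bigr)$ is the cheapest part: since $V\in\K(\H_\G)$ by \eqref{GeneralPotential}, Weyl's theorem on stability of the essential spectrum under compact perturbations gives it immediately, and then Proposition~\ref{Prop1}(i) identifies this set with $\con{I_-}\cup\con{I_+}$. The substance of the theorem is the local Mourre estimate on $\mu_k\bigl(H_0^{(\G)}\bigr)$, together with the requisite regularity of $H^{(\G)}$ with respect to $A$.

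First I would fix a compact $I\subseteq\mu_k\bigl(H_0^{(\G)}\bigr)$. On such an $I$, away from the critical set $\kappa_k\bigl(H_0^{(\G)}\bigr)$ of \eqref{CriPointsAk}, the band function $\lambda$ has nonvanishing derivative, so the free operator $H_0^{(\G)}$ admits a strict Mourre estimate $E_I(H_0^{(\G)})\,i[H_0^{(\G)},A]\,E_I(H_0^{(\G)})\ge c\,E_I(H_0^{(\G)})$ for some $c>0$ with respect to the conjugate operator $A$ built in Section~\ref{MourreEstimatesFreeModel} (the index $k$ precisely encodes which refinement of the conjugate operator is in play, hence why the excluded set depends on $k$). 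The next step is to control the perturbation: I would show that under Assumption~\ref{A1}, with $V_j\in\S(\G)+\M_k(\G)+\Qq_{k,2}(\G)$, the commutators $i[V,A]$ and $i[[V,A],A]$ make sense as bounded (indeed, for the leading part, locally compact) operators between the relevant Sobolev scales $\Hh_s(A)$. This is exactly what the decomposition into short-range ($\S$), medium-range ($\M_k$), and long-range ($\Qq_{k,2}$) pieces is designed to deliver — the $\S$ and $\M_k$ parts contribute a relatively compact commutator that can be absorbed up to an $E_I$-small error, while the long-range part $\Qq_{k,2}$ is handled so that $V\in C^{1,1}(A)$ (or $C^2(A)$) yet still yields a commutator of the correct regularity. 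Combining the free estimate with the compactness of the perturbation's commutator yields, after shrinking $I$ around any point not in $\tilde{\kappa}_k$, the full Mourre estimate for $H^{(\G)}$.

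With the $C^{1,1}(A)$ regularity of $H^{(\G)}$ and the Mourre estimate in hand, the two conclusions follow from the standard machinery. The finiteness of eigenvalues in any compact $I\subseteq\mu_k\bigl(H_0^{(\G)}\bigr)$ with finite multiplicity, and the confinement of accumulation points of $\E_p\bigl(H^{(\G)}\bigr)$ to $\kappa_k\bigl(H_0^{(\G)}\bigr)$, are the virial-theorem consequences of a local strict Mourre estimate. The limiting absorption principle on $\R\setminus\tilde{\kappa}_k$ in $\B(\K,\K^*)$ with $\K=\Hh_s(X^{(\G)})$, $s>1/2$, is the conclusion of the $C^{1,1}(A)$ version of Mourre's theorem, and the vanishing of $\sic\bigl(H^{(\G)}\bigr)$ is an immediate corollary of the LAP. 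A technical point to address is that the conjugate operator $A$ is adapted to $X^{(\G)}$, so one must reconcile the Sobolev scale $\Hh_s(A)$ appearing in the abstract theorem with the scale $\Hh_s(X^{(\G)})$ in which the LAP is stated; I would argue that on the relevant spectral subspaces these scales are comparable, so the weighted resolvent bounds transfer.

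I expect the principal obstacle to be the perturbation regularity step for the long-range class $\Qq_{k,2}(\G)$: verifying that sequences $V_j$ controlled only by the discrete second-difference seminorms $q_{k,1},q_{k,2}$ yield iterated commutators with $A$ of the right order is delicate, since the commutator of a long-range multiplication operator with the shift-type conjugate operator $A$ produces discrete-derivative expressions whose decay must be read off precisely from membership in $\Qq_{k,2}(\G)$. Establishing the $C^{1,1}(A)$ membership of $H^{(\G)}$ through these estimates — deferred to Section~\ref{admissiblesboth} — is where the real work lies; the abstract Mourre conclusions are then essentially a black box.
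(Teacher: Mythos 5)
Your proposal is correct and follows essentially the same route as the paper's proof: Weyl's theorem for the essential spectrum, the Mourre estimates of Section \ref{MourreEstimatesFreeModel} for $H_0^{(\G)}$ with respect to the conjugate operators $A_k^{(\G)}$, the verification (Lemmas \ref{ClasC1Gap} and \ref{AdmissibleGap2}) that Assumption \ref{A1} makes $V$ an admissible perturbation in the sense of Definition \ref{DefCritandAdmiss}, the abstract virial/LAP machinery of Proposition \ref{MainMourrePerturbed}, and the transfer from the $\left\bra A_k^{(\G)}\right\ket$-Sobolev scale to the $\left\bra X^{(\G)}\right\ket$-scale via Lemma \ref{AtoX}. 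The only discrepancy is one of emphasis: the paper does not absorb the $\S(\G)$ and $\M_k(\G)$ commutator contributions ``up to an $E_I$-small error'' as you sketch, but instead shows that each of the three classes lies in $\Cc^{1,1}\left(A_k^{(\G)}\right)$, after which the compactness of $\left[iA_k^{(\G)},V\right]$ and the stability of the Mourre estimate under the perturbation come for free from Proposition \ref{MainMourrePerturbed}.
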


The following example provides concrete long-range perturbations satisfying the hypotheses of Theorem \ref{main1}.  Further examples can be constructed based on \cite[Section 10]{g-m-2}. 
 
\begin{exam}\label{ExLongRange}
 Let $x \in \R$ and $\bra x \ket := \sqrt{x^2+1}$. For any integer $p \geq 2$ and $r \in \R$, let $\ln_p^r(x):= (\ln_p(x))^r$, where $\ln_p(x)$ is defined recursively by
 $$ \ln_0(x):=1 , \ \ln_1(x):= \ln(1+x),  \  \ln_p(x):= \ln(1+ \ln_{p-1}(x)).$$
In addition, we define
       \begin{equation}\label{omega}
           \omega_{l}^{ r }(x) := \ln_{l+1}^r(\bra x \ket ) \displaystyle\prod_{p=0}^l \ln_{p}(\bra x \ket ), \quad  l \in \Z_+,  \ r, x \in \R.
       \end{equation}
Suppose that  $k \in \Z_{+} \setminus \{0\}$ and $V \in \K\left(\H_\G\right)$ is a self-adjoint operator as in \eqref{GeneralPotential}. If for all $j=0,1,\ldots, N$, there exist $l \in \Z_{+}$ and $ r>1$ such that 
 $(V_j- \tau^k V_j)(n)$ (respectively, $V_j(n)$) is $ O(|n|^{-1} (\omega^{r}_l(n))^{-1})$ as $|n| \to \infty$, then $V \in \M_k(\G)$ (respectively, $V \in \S(\G)$).
\end{exam}

In the gapless case, that is, when $\alpha=0$ and $|a|=|b|>0$, we have $\con{I}_{-} \cap \con{I}_{+} = \{ 0\}$ and $0$ is known as a conical point of the function $\lambda$ defined in \eqref{EigvSymbol} (see Figure \ref{fig1}). Furthermore, for each $k>0$ it holds $0 \in \kappa_k\left(H_0^{(\G)}\right)$, so that $0$ may be an accumulation point of $ \E_p\left(H_0^{(\G)} +V\right)$ for some perturbations $V$ satisfying Assumption \ref{A1}. We can avoid this kind of situation by introducing an alternative hypothesis on $V$.
 
\begin{ass}\label{A2}
  $V \in \K(H_\G)$ is a self-adjoint operator as in \eqref{GeneralPotential} and for each $j=0,1, \ldots, N,$ it is true that $V_j \in \S(\G) + \M_0(\G)$. 
\end{ass}

This leads us to our second main result. 

\begin{theo}\label{main2}
    Suppose that  $\alpha=0$, $|a| = |b|$ and $V$ satisfies Assumption \ref{A2}. Consider the operator $H^{(\G)}:=H_0^{(\G)}+V$ and the sets 
    \begin{align}
    \mu_0\left(H_0^{(\G)}\right)&:= (-2|a|,2|a|), \\
     \kappa_0\left(H_0^{(\G)}\right) &:= \{\pm 2|a|\},\\
    \tilde{\kappa}_0\left(H_0^{(\G)}, V \right) &:= \kappa_0\left(H_0^{(\G)}\right) \cup \E_p\left(H^{(\G)}\right).
    \end{align}
  Then $ \sigma_{\ess} \left(H^{(\G)}\right) = \sigma_{\ess} \left(H_0^{(\G)}\right)$ and the following assertions hold:

\begin{enumerate}
    \item[1.] For every compact subset $I$ of $\R$ with $I  \subseteq \mu_k\left(H_0^{(\G)}\right)$, the set $\E_p\left(H^{(\G)}\right) \cap I$ is finite, and each of these eigenvalues has finite multiplicity. The possible accumulation points of $ \E_p\left(H^{(\G)}\right)$ belong to $ \kappa_0\left(H_0^{(\G)}\right)$.
    \item[2.] For all  $s>1/2$ and $\K := \mathscr{H}_s \left(X^{(\G)} \right)$ the following LAP is satisfied: the holomorphic map $ \C_{\pm} \ni z \mapsto \left(H^{(\G)}-z\right)^{-1} \in \B\left( \K, \K^* \right)$ extends to a weak$^*$-continuous map on   $ \R \setminus \tilde{\kappa}_0\left(H^{(\G)}\right)$. In particular,  $\sic\left(H^{(\G)}\right) = \emptyset$.
\end{enumerate}  
\end{theo}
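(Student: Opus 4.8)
The plan is to derive both assertions from the abstract conjugate-operator theory recalled in Section~\ref{ReviewMourre}, applied with the single alternative conjugate operator $A_0$ constructed in Section~\ref{A0Gapless}, in place of the sequence $\{A_k\}$ used for Theorem~\ref{main1}. Under the gapless hypothesis $\alpha=0$, $|a|=|b|$, formula \eqref{EigvSymbol} gives $\lambda(\theta)=2|a|\,\lvert\cos((\theta+\varphi)/2)\rvert$, so $\sigma\!\left(H_0^{(\G)}\right)=[-2|a|,2|a|]$, with the two bands $\con{I}_{\pm}$ meeting at the conical point $0$; crucially, $\lambda'$ vanishes only at the band edges $\pm 2|a|$ and stays bounded away from $0$ elsewhere, the point $0$ included. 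The operator $A_0$ is tailored to this conical crossing precisely so as to produce a \emph{strict} Mourre estimate on all of $\mu_0\!\left(H_0^{(\G)}\right)=(-2|a|,2|a|)$ --- with $0$ now a regular point rather than a possible accumulation threshold as in Theorem~\ref{main1} --- leaving only the genuine critical values $\kappa_0=\{\pm 2|a|\}$.

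I would first record the free estimate. For every compact $I\subset\mu_0\!\left(H_0^{(\G)}\right)$, Section~\ref{A0Gapless} should supply a constant $c>0$ and a compact operator $K$ such that
\begin{equation*}
E_I\!\left(H_0^{(\G)}\right)\,\bigl[\,iH_0^{(\G)},A_0\,\bigr]\,E_I\!\left(H_0^{(\G)}\right)\ \geq\ c\,E_I\!\left(H_0^{(\G)}\right)+K,
\end{equation*}
together with the requisite regularity $H_0^{(\G)}\in C^{1,1}(A_0)$. Stability of the essential spectrum $\sigma_{\ess}\!\left(H^{(\G)}\right)=\sigma_{\ess}\!\left(H_0^{(\G)}\right)$ is immediate from Weyl's theorem, since $V\in\K(\H_\G)$ by the form \eqref{GeneralPotential}.

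Next I would transfer the estimate to $H^{(\G)}=H_0^{(\G)}+V$. Under Assumption~\ref{A2} each $V_j$ lies in $\S(\G)+\M_0(\G)$, and these are exactly the classes established in Section~\ref{admissiblesboth} to be admissible for $A_0$: they should ensure $V\in C^{1,1}(A_0)$ and that $\bigl[V,A_0\bigr]$ is compact, the short-range part $\S(\G)$ contributing a compact potential with compact commutator, while the long-range part $\M_0(\G)$, governed by the seminorm $q_0$ of \eqref{seqW0}, controls the slowly varying diagonal and off-diagonal structure. Hence $\bigl[iH^{(\G)},A_0\bigr]$ differs from $\bigl[iH_0^{(\G)},A_0\bigr]$ by a compact term, and the localized Mourre estimate persists for $H^{(\G)}$ on every compact $I\subset\mu_0\!\left(H_0^{(\G)}\right)$ after enlarging $K$. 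With a strict Mourre estimate and $C^{1,1}(A_0)$ regularity in hand, the abstract results of Section~\ref{ReviewMourre} then deliver the two conclusions: the Virial theorem yields local finiteness of $\E_p\!\left(H^{(\G)}\right)$ with finite multiplicities on compact subsets of $\mu_0$, accumulating only at $\kappa_0$; and the abstract limiting absorption principle produces the weak$^*$-continuous boundary values of $\left(H^{(\G)}-z\right)^{-1}$ on $\R\setminus\tilde\kappa_0\!\left(H^{(\G)}\right)$, whence $\sic\!\left(H^{(\G)}\right)=\emptyset$.

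One technical point is that the theorem phrases the LAP in the scale $\Hh_s\!\left(X^{(\G)}\right)$, whereas Mourre theory produces it in $\Hh_s(A_0)$; closing this gap requires comparing the two Sobolev scales on the spectral region of interest, which should follow from the explicit form of $A_0$ in Section~\ref{A0Gapless}, where $A_0$ is built from the position operator $X^{(\G)}$. I expect the genuine obstacle, however, to be the admissibility verification $\S(\G)+\M_0(\G)\subset C^{1,1}(A_0)$ with compact commutator. The specific combination $\|W^{12}\|+\|W^{21}\|+\|W^{11}-W^{22}\|+\|\tau W^{22}-W^{11}\|$ defining $q_0$ is surely chosen to match exactly the output of $\bigl[V,A_0\bigr]$ for the conical-point operator; establishing this matching, and extracting from it both the $C^{1,1}(A_0)$ membership and the compactness, is the step where the fine structure of $A_0$ enters in an essential way, and it is the most delicate part of the argument.
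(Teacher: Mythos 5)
Your proposal is correct and follows essentially the same route as the paper: the conjugate operator $A_0$ of Section~\ref{A0Gapless} (Theorems~\ref{MainA0} and \ref{MainA0Unilateral} supply the regularity and the identifications $\mu_{A_0^{(\G)}}\left(H_0^{(\G)}\right)=(-2|a|,2|a|)$ and $\kappa_{A_0^{(\G)}}\left(H_0^{(\G)}\right)=\{\pm 2|a|\}$), admissibility of $V$ under Assumption~\ref{A2} via Lemma~\ref{AdmissibleGap2}, the abstract conclusions via Proposition~\ref{MainMourrePerturbed}, and the transfer of the LAP from the $\Hh_s\left(A_0^{(\G)}\right)$ scale to $\Hh_s\left(X^{(\G)}\right)$ via Lemma~\ref{AtoX}. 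The two technical points you single out as delicate are precisely the ones the paper isolates in Lemmas~\ref{AdmissibleGap2} and \ref{AtoX}, so your outline matches the paper's proof step for step.
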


\begin{rem}\label{RmkLAP1}

In  Theorems \ref{main1} and  \ref{main2} the existence of a LAP is equivalent to the fact that for any $f,g \in \K:= \mathscr{H}_s \left(X^{(\G)} \right) $, the limits 
    $$\innp{f}{\left(H^{(\G)}-x \pm i 0 \right)^{-1} g}:=  \lim_{\eps \to 0^+} \innp{f}{\left(H^{(\G)}-x \pm i \eps \right)^{-1}g} $$ 
    exist locally uniformly in  $ \R  \backslash \tilde{\kappa}_k\left(H^{(\G)}\right)$ for $k>0$ and $k=0$, respectively. In particular, the limit functions are continuous and for $s > 1/2$ and  each compact set $I \subseteq \R \backslash \tilde{\kappa}_k\left(H^{(\G)}\right)$ it holds 
    $$ 
    \sup_{\Rp(z) \in I, \, \Ip(z) \neq 0} \, \left\| \left\bra X^{(\G)}\right\ket^{-s} \left(H^{(\G)}-z \right)^{-1} \left\bra X^{(\G)}\right\ket^{-s} \right\| < \infty.
    $$
    We point out that under hypothesis of Theorem \ref{main1} or \ref{main2} a LAP also holds in the Banach space $\left(D\left(X^{(\G)}\right),\H_{\G} \right)_{1/2,1}$,  obtained via real interpolation \cite{abmg}.
\end{rem}

\begin{rem}
    Consider the case in which $H_0^{(\G)}$ has no spectral gap, and let $V \in \K\left(H_\G \right)$ be a self-adjoint operator as in \eqref{GeneralPotential}. If   $V_j \in \S(\G)$ for all $j=0,1, \ldots,N$, both Theorems \ref{main1} and \ref{main2} apply, but Theorem \ref{main2} says that $0$ can not be an accumulation point of $ \E_p\left(H_0^{(\G)} +V\right)$. Observe also that in this situation, $\M_0(\G)$  is strictly contained in  $ \M_1(\G)$.  
\end{rem}

\subsection{Connection with previous works}\label{connection}

This section aims to contrast our main theorems with previous results from \cite{P.C-sah, sahahah} on block Jacobi operators, and from \cite{g-h, kopylova, k-t2,  k-t} on the one-dimensional discrete Dirac operator.

\subsubsection{Block Jacobi operators}
In order to compare our results with \cite{P.C-sah, sahahah}, we first recall the definition of Jacobi operators acting on $\H_\G$.

\begin{defi} Let $d \in \Z_{+} \setminus \{0\}$, and  $(A_n)_{n \in \G}$ and $(B_n)_{n \in \G}$ be two bounded sequences in $M_2(\C^d)$ such that $B_n= B_n^*$ for all $n \in \G$. If $d=1$, we also assume that $A_n>0$ for all $n \in \G$. The Jacobi operator $J_d^{(\G)}= J_d^{(\G)}\left((A_n)_{n \in \G}, (B_n)_{n \in \G} \right)$ associated with these sequences acts in $\ell^2(\Z; \C^d)$ by
\begin{equation}\label{TwoPeriodic}
    \left( J_d^{(\G)} u\right)(n) := \left\{ \begin{array}{ll}
       B_n u(n) + A_n u(n+1)   & \mbox{if } \G= \Z_+ \mbox{ and } n=0,  \\
       A_{n-1}^* u(n-1) + B_n u(n) + A_nu(n+1)  & \mbox{otherwise},
    \end{array}\right.
\end{equation}
for $u \in \ell^2(\Z; \C^d)$ and $n \in \G$. Furthermore,
\begin{itemize}
    \item[(i)] Given $A, B \in M_d(\C)$, we define  $J_d^{(\G)}(A,B) := J_d^{(\G)}((A)_{n \in \N},(B)_{n \in \N})$.
  \item[(ii)]  If $d=1$, and there is $N \in \Z_{+}$ with $N>0$ such that $A_{n+N}= A_n$ and $B_{n+N}= B_n$ for all $n \in \G$, we say that $J_1^{(\G)}$ is  a periodic Jacobi operator on $\ell^2(\G)$ with period $N$.
\end{itemize}
\end{defi}

Note that the map $((A_n)_{n \in \N},(B_n)_{n \in \N}) \mapsto J_d^{(\G)}((A_n)_{n \in \N},(B_n)_{n \in \N}) \in \B(\ell^2(\G; \C^2))$ is linear. In what follows, we focus on the case $d=2$ and assume that there exist $A, B \in M_2(\C)$ such that 
\begin{equation} \label{ConvergenceAnBn}
    \lim_{|n| \to \infty} \|A_n-A \| + \| B_n-B\|=0.
\end{equation}
Then  $J_2^{(\G)} = J_2^{(\G)}(A,B) +V,$ where  $V:= J_2^{(\G)}\left((A_n-A)_{n \in \G}, (B_n-B)_{n \in \G} \right)$ is a compact self-adjoint operator on $\H_{\G}$. The nature of $\sigma \left(J_2^{(\G)} \right)$ depends on the rate of convergence in \eqref{ConvergenceAnBn}. For example, the following two assertions were shown in \cite{sahahah} (respectively, \cite{ P.C-sah}) when $\G=\Z$ (resp. $\G=\Z_+$):

\begin{enumerate}
    \item Let $\lambda_1, \lambda_2: \T \to \R$ be analytic functions such that for all $\theta \in \T$, $\{\lambda_1(\theta), \lambda_2(\theta) \}$ are the repeated eigenvalues of the symbol $ e^{-i \theta}A  + B+ A^*e^{-i \theta}$. Define the set 
    \begin{equation}\label{criticalSah}
        \kappa \left( J_2^{(\G)}(A,B) \right):= \{ \lambda_1(\theta) : \lambda_1'(\theta)=0 \} \cup \{ \lambda_2(\theta) : \lambda_2'(\theta)=0 \}.
    \end{equation}
    Suppose that
     \begin{equation}\label{limm0}
    \lim_{|n| \to \infty} |n|(\|A_{n}-A \| + \|B_{n}-B\|)=0.
\end{equation}
    Then all the eigenvalues of $J_2^{(\G)}$ outside $\kappa\left(J_2^{(\G)}(A,B) \right)$ have finite multiplicity, and their possible accumulation points are contained in $\kappa\left(J_2^{(\G)}(A,B)\right)$.

\item The conclusions of Theorem \ref{main1} remain true for $J_2^{(\G)}$ and $\kappa \left( J_2^{(\G)}(A,B) \right)$ instead of $ H^{(\G)}$ and $\kappa_k\left(H_0^{(\G)}\right)$, respectively, provided that
\begin{equation}\label{limm1}
    \int_1^\infty  \sup_{r < |n| < 2r}(\|A_{n}-A \| + \|B_{n}-B\|)  \, dr < \infty.
\end{equation}
\end{enumerate}

We now assume further that there are $\alpha \in \R$ and $a,b \in \C \setminus \{0\}$ such that
\begin{equation}\label{eqChoiceAandB}
    A := \begin{pmatrix}
        0 & \con{b} \\
        0 & 0
    \end{pmatrix} \quad \mbox{and}  \quad B := \begin{pmatrix}
        \alpha & \con{a}\\
        a & -\alpha
    \end{pmatrix}.
\end{equation}
Then  $ J_2^{(\G)}(A,B)= H_0^{(\G)}(\alpha, a,b)$, where $H_0^{(\G)}$ is given by \eqref{DefH0}. Furthermore, for the unitary operator  $U: \ell^2(\G) \to  \ell^2(\G; \C^2)$ defined by  \begin{equation}\label{EqUnitaryJacobi}
    (U\psi)(n) = \begin{pmatrix}
    \psi(2n+1) \\
    \psi(2n)
\end{pmatrix}, \quad \psi \in  \ell^2(\G), \, n \in \G,
\end{equation}
we have that
\begin{equation}
     U^{-1} H_0^{(\G)}(\alpha, a,b)U =: J_1^{(\G)}((a_n)_{n \in \G}, (b_n)_{n \in \G})
\end{equation}
is the periodic Jacobi operator with period $2$ associated with the sequences $(a_n)_{n \in \G} $ and $(b_n)_{n \in \G}$ given by 
$$a_{2n}:= a, \, a_{2n+1}:= b, \, b_{2n}:= -\alpha, \, b_{2n+1}:=\alpha   , \quad n \in \G. $$
Observe that the sets  $\kappa\left(J_2^{(\G)}(A,B) \right)$ and  $\kappa_1\left(H_0^{(\G)}\right)$ given in \eqref{criticalSah}  and \eqref{kappa1}, respectively, are the same provided that $ \alpha \neq 0$ or $|a| \neq |b|$.  In this case, our main results allow us to treat the larger class of perturbations $V$ such that 
\begin{equation}\label{Lim1}
    \lim_{|n| \to \infty} |n|(\|A_{n+k}-A_{n} \| + \|B_{n+k}-B_{n}\|)=0,
\end{equation}
and
\begin{equation}\label{limm11}
    \int_1^\infty  \sup_{r < |n| < 2r}(\|A_{n+k}-A_n \| + \|B_{n+k}-B_n\|)  \, dr < \infty,
\end{equation}
for some $k > 0$. Indeed, with the notation introduced in Section \ref{MainTheorems}, \eqref{Lim1} and \eqref{limm11} means that the components of $V$ belong to $ \Qq_{k,1}(\G)$  and $\M_k(\G)$, respectively. Thus, Assumption~\ref{A1} is fulfilled by $V$ and Theorem \ref{main1} holds for $J^{(\G)}$ instead of $ H^{(\G)}$. 
We also point out that  $V \in \Qq_{k,2}(\G)$ provided that
\begin{equation}\label{Lim2}
    \lim_{|n| \to \infty} n^2(\|A_{n}-2A_{n+k}+ A_{n+2k}\| + \|B_{n}-2B_{n+k}+ B_{n+2k}\|)=0.
\end{equation}

\begin{rem} Condition \eqref{limm0} implies \eqref{Lim1}, but the converse does not hold. To see this,  consider  $A_n= A + (\omega_0^1(n))^{-1}$ for $n \in \G$ and $\omega_0^1$ defined as in \eqref{omega}. From this basic example, one can also construct potentials $V$ for which \eqref{Lim1} is true for some $k>1$ but fails for $k=1$. 
\end{rem}

\begin{rem}\label{remDiscreteLaplacian}
    If $|a|=|b|$ and $\alpha=0$, we have that $H_0^{(\Z)}(\alpha, a,b)$ is also unitarily equivalent to the scalar periodic Jacobi operator $2 |a| \Delta^{(\Z)}$, where $\Delta^{(\Z)}$ is the discrete Laplacian on $\ell^2(\Z)$ defined by
    $$ (\Delta^{(\Z)} \psi)(n)= \psi(n-1)+\psi(n+1), \quad  \psi \in \ell^2(\Z), \, n \in \N.$$
For a detailed analysis of certain long-range perturbations of this operator, we refer to \cite{g-m-2}. An alternative approach is discussed in Section \ref{A0Gapless}, which serves as the starting point for the proof of  Theorem \ref{main2}.
\end{rem}

\subsubsection{One-dimensional discrete Dirac operators}
As mentioned before, limiting absorption principles for one-dimensional discrete Dirac operators have been established in \cite{g-h, kopylova, k-t2,  k-t} for the massive case. Recall that the corresponding unperturbed operator $D_m^{(\G)}$ was introduced in Example \ref{ExDirac}. The first advantage of our work is that some massless discrete Dirac operators are covered by Theorem~\ref{main2}. In this section, we assume that $m>0$ and exhibit examples of perturbed operators $D_m^{(\Z)}+V$ that are not included in the mentioned references, but for which the conclusions of Theorem~\ref{main1} are true if we take $D_m^{(\Z)}+V$ instead of $H^{(\Z)}$.

It is known that the spectrum of $D_m^{(\G)}$ is purely absolutely continuous (see Proposition \ref{Prop1} or \cite{c-d-p}). Furthermore,  $\sigma\left(D_m^{(\G)} \right) = \overline{\Gamma}$, where 
\begin{equation}\label{gamma}
    \Gamma:= \left(-\sqrt{m^2+4}, -m \right) \cup \left(m, \sqrt{m^2+4} \right).
\end{equation}
In \cite{kopylova} it is assumed that the matrix-valued potential $V:=V_0 \in \ell^\infty(\Z ;M_2(\C))$ satisfies 
\begin{equation}\label{EEE}
   V_0^{12}(n) \neq -1 \quad \mbox{and} \quad |V_0^{ij}(n)| \leq  C (1+|n|)^{-\rho}, \quad i,j \in \{1,2\}, \ n \in \Z,
\end{equation}
for some $\rho>1$. Under these conditions, \cite[Theorem 4.1]{kopylova} states that $\sigma_{\ess}\left(D_m^{(\Z)}+V\right) = \con{\Gamma}$ and for all $s>1/2$ and $\K:= \mathscr{H}_s\left( X^{(\Z)} \right)$ the following LAP holds: the holomorphic map $ \C_{\pm} \ni z \mapsto \left(D_m^{(\Z)}+V-z\right)^{-1} \in \B\left( \K, \K^* \right)$ extends to a weak$^*$-continuous map on $\C_\pm  \cup \Gamma$. In particular,    there are no eigenvalues of the perturbed operator embedded in  $\Gamma$. The same kind of conclusions are obtained in \cite{k-t2,  k-t} by assuming that $ V:=V_0 \in \ell^\infty(\Z ;M_2(\R))$ is a real potential such that 
\begin{equation}\label{EEE2}
    V_0^{12}(n) \neq -1 \quad \mbox{and} \quad V_0^{ij} \in \ell^1(\Z), \quad    i,j \in \{1,2\}, n \in \Z .
\end{equation}
The next example gives a potential that satisfies 
Assumption \ref{A1}, but neither  \eqref{EEE} nor \eqref{EEE2}.

\begin{exam}\label{ExampleKopylov}
Let $V:=V_0 \in \ell^\infty(\Z ;M_2(\C))$ be the sequence defined by
$V_0 (n) := (\omega_0^1(n))^{-1} I_2$ for $n \in \Z$ , where $I_2 \in M_2(\C)$ is the identity matrix and $\omega_0^1$ is defined as in \eqref{omega}. Then $V  \in \K\left(\H_\Z\right) \cap \Qq_{1,2}(\Z)$. By Theorem \ref{main1},  $ D_m^{(\Z)} + V$ has no singular continuous spectrum, and the possible accumulation points of $\E_p\left( D_m^{(\Z)} + V \right)$ belong to  $\left\{ \pm m , \pm \sqrt{m^2+4} \right\} $. However, $V_0$ fails to meet $\eqref{EEE}$ for any $\rho>1$, and  \eqref{EEE2} and . 
\end{exam}

To facilitate comparison with \cite{g-h}, we specify two assumptions on $V$.

\begin{itemize}
    \item[(\bf A1)]  $V:=V_0 + SV_1+V_1^*S^* $ is a compact self-adjoint operator on the space $\H_{\G}$ such that $V_0, V_1 \in \ell^\infty(\G ;M_2(\C))$,  $ V_1^{11}= V_1^{12}=V_1^{22}=0 $ and   
\begin{equation*}
          V_0^{12}(n) \neq -1 \mbox{ and } V_1^{21}(n) \neq 1 \quad \mbox{for all } n \in \G.
    \end{equation*}

    \item[(\bf A2)] There exist $p_1, p_2 \in \Z_+ \setminus \{0\}$ such that 
    \begin{equation*}
        \at{V_0^{jj}}{\Z_+} - \tau^{p_1} \at{V_0^{jj}}{\Z_+} \in \ell^1(\G; \R) \mbox{ and } 
        \at{V_l^{21}}{\Z_+} - \tau^{p_2} \at{V_l^{21}}{\Z_+} \in \ell^1(\G; \C),
    \end{equation*}
    for $j=1,2$ and $l=0,1$.
\end{itemize}

\begin{rem} By Lemma \ref{AppendixLemma},  if $V_1 \in \M_{p_2}(\Z_{+})$ and $V_0 \in \M_{p}(\Z_{+})$ for some common factor $p$ of $p_1$ and $p_2$, then assumption $(\bf{A2})$ is fulfilled.
\end{rem}

According to  \cite[Theorems 3.1 and 3.3]{g-h}, if the assumptions $(\bf{A1})$ and $(\bf{A2})$ are true, then the spectrum of $D_m^{(\G)}+V$ is purely absolutely continuous on the set $\Gamma$ defined in \eqref{gamma}. Proposition~\ref{counterex} below shows the existence of potentials of the form indicated in $(\bf{A1})$ that do not satisfy   $(\bf{A2})$, but are nevertheless covered by our analysis. Before stating it, we introduce a definition and prove a preliminary lemma.

\begin{defi}
    Let $(\P_n)_{n \in \Z_{+}}$ be a partition of $\Z_{+}$ such that $|\P_{n}|<|\P_{n+1}|$ for all  $n \geq 1$, and 
    \begin{equation}\label{MinmaxPn}
        \alpha_n:= \min \P_n < \max \P_n =: \beta_n = \alpha_{n+1}-1, \quad n \in \Z_{+}.
    \end{equation}
    A sequence of functions $(f_n)_{n \in \Z_{+}}$ on $ \Z_{+}$ is called  subordinate to the partition $(\P_n)_{n \in \Z_{+}}$ if $f_n \geq 0$, $\supp(f_n) \subseteq \P_n$ and $\| f_n\|_1:=  \sum_{j \in \P_n} f_n(j) >0$
    for each $n \in \Z_{+}$.
\end{defi}

\begin{lem}\label{SpecialSequence}
Let $(f_n)_{n \in \Z_{+}}$ be a sequence of functions subordinate to a partition $(\P_n)_{n \in \Z_{+}}$ of $\Z_{+}$, and let $(a_n)_{n \in \Z_{+}}$ be a sequence of positive real numbers such that the series $\sum_{n=0}^\infty (-1)^na_n$ is conditionally convergent. For each $n \in \Z_{+}$ and $j \in \P_n$, define
\begin{equation}\label{bnsequence}
    b_j:= \frac{(-1)^n}{  \| f_n\|_1 }a_n f_n(j).
\end{equation}
Then 
\begin{itemize}
\item[\textnormal{(i)}] The series $\sum_{j=0}^\infty b_j$ converges.
\item[\textnormal{(ii)}] For any $p \in \Z_{+} \setminus \{ 0 \}$, the series $ \sum_{j=0}^\infty |b_j+b_{j+1}+ \cdots+ b_{j+p-1}| $ diverges.
    \end{itemize}
In addition, let  $\alpha_n$ and $\beta_n$ be as in \eqref{MinmaxPn} and  assume that $f_n(\alpha_n)=0$ for  all $n \in \Z_{+}$,
\begin{equation}\label{BoundL1}
   L_1:= \sup_{n \in \Z_+} \frac{\beta_n a_n\| f_n\|_{\infty}}{ \| f_n\|_1}  := \sup_{n \in \Z_+} \frac{\beta_n a_n}{ \| f_n\|_1} \left( \max_{j \in \P_n} f_n(j) \right) \in \R,
\end{equation}
and
\begin{equation}\label{BoundL2}
    L_2:= \sup_{n \in \Z_+}  \frac{\beta_n^2 a_n}{ \| f_n\|_1} \max M_n \in \R,
\end{equation}
where $M_n:= \left\{  | f_n(j)-f_n(j+1)| : \alpha_n \leq j < \beta_n \right\} \cup 
 \{f_n(\beta_n)\}$ for each $n \in \Z_{+}$. Then 
\begin{itemize}
     \item[\textnormal{(iii)}] The sequences $(j b_j)_{j \in \Z_{+}}$ and $(j^2(b_j -b_{j+1}))_{j \in \Z_{+}}$ are bounded.
\end{itemize}
\end{lem}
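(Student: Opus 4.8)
The plan rests on the elementary identity obtained by summing \eqref{bnsequence} over a complete block,
\[
\sum_{j\in\P_n} b_j = \frac{(-1)^n a_n}{\|f_n\|_1}\sum_{j\in\P_n} f_n(j) = (-1)^n a_n,
\]
together with the fact that $(-1)^n b_j\ge 0$ for every $j\in\P_n$ (since $f_n\ge 0$ and $a_n>0$). For (i), let $T_n := \sum_{m=0}^n (-1)^m a_m$ denote the partial sums of the given conditionally convergent series, with limit $L$. If $N$ lies in $\P_n$, I would split $\sum_{j=0}^N b_j = T_{n-1} + \sum_{j=\alpha_n}^N b_j$; the trailing partial block-sum is monotone and trapped between $0$ and $(-1)^n a_n$, so $\sum_{j=0}^N b_j$ lies in the closed interval with endpoints $T_{n-1}$ and $T_n$. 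Since $N\to\infty$ forces $n\to\infty$ and $T_{n-1},T_n\to L$, the series converges to $L$.

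For (ii), the same block computation with absolute values gives $\sum_{j\in\P_n}|b_j| = a_n$, and the conditional (non-absolute) convergence of $\sum(-1)^n a_n$ forces $\sum_n a_n=\infty$. Fix $p\ge 1$. Since $|\P_n|$ is strictly increasing, $|\P_n|\ge p$ for all large $n$. Discarding from $\sum_j |b_j+\cdots+b_{j+p-1}|$ every window $[j,j+p-1]$ not contained in a single block only lowers the sum, the discarded terms being nonnegative. Within a block the constant-sign property turns each surviving window sum into $\sum_{i=0}^{p-1}|b_{j+i}|$; writing the resulting block contribution as $\sum_{k\in\P_n} c_k|b_k|$, where $c_k$ counts the admissible windows containing $k$, I would observe that $c_k\ge 1$ for every $k\in\P_n$ as soon as $|\P_n|\ge p$. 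Hence block $\P_n$ contributes at least $\sum_{k\in\P_n}|b_k| = a_n$, and summing over the cofinite set of $n$ with $|\P_n|\ge p$ yields divergence.

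For (iii), the bound on $(jb_j)$ is direct: for $j\in\P_n$ one has $j\le\beta_n$ and $f_n(j)\le\|f_n\|_\infty$, whence $|jb_j| \le \beta_n a_n\|f_n\|_\infty/\|f_n\|_1\le L_1$ by \eqref{BoundL1}. For $(j^2(b_j-b_{j+1}))$ I would distinguish two cases. If $j$ and $j+1$ lie in the same block $\P_n$ (so $\alpha_n\le j<\beta_n$), then $b_j-b_{j+1} = \tfrac{(-1)^n a_n}{\|f_n\|_1}(f_n(j)-f_n(j+1))$ with $|f_n(j)-f_n(j+1)|\le\max M_n$, giving $|j^2(b_j-b_{j+1})|\le \beta_n^2 a_n\max M_n/\|f_n\|_1\le L_2$ by \eqref{BoundL2}. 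If instead $j=\beta_n$ is a block boundary, then $j+1=\alpha_{n+1}$ and the hypothesis $f_{n+1}(\alpha_{n+1})=0$ forces $b_{j+1}=0$, so $b_j-b_{j+1}=b_{\beta_n}$ with $|b_{\beta_n}| = a_n f_n(\beta_n)/\|f_n\|_1$; since $f_n(\beta_n)\in M_n$, the bound $L_2$ again applies.

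The main obstacle is part (ii): the crucial point is the counting observation that, once a block is longer than $p$, every index inside it is covered by at least one length-$p$ window contained in that block, which is exactly what lets me recover the full block mass $a_n$ and then exploit $\sum a_n=\infty$. I also expect the bookkeeping at block boundaries in (iii) to require care; there the two structural hypotheses, namely $f_n(\alpha_n)=0$ and the inclusion of $f_n(\beta_n)$ in $M_n$, are precisely what make the across-boundary differences controllable by $L_2$.
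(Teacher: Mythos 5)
Your proposal is correct and follows essentially the same route as the paper's proof: block-sum telescoping and squeezing between consecutive partial sums $T_{n-1},T_n$ for (i), restricting to length-$p$ windows contained in a single block (where signs are constant) to recover the full block mass $a_n$ for (ii), and the same two-case analysis at block boundaries using $f_{n+1}(\alpha_{n+1})=0$ and $f_n(\beta_n)\in M_n$ for (iii). The only cosmetic difference is that you spell out the window-counting bound $c_k\ge 1$ explicitly, which the paper leaves implicit in the inequality $\sum_{j=\alpha_n}^{\beta_n-p+1}(f_n(j)+\cdots+f_n(j+p-1))\ge\|f_n\|_1$.
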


\begin{proof} We first show (i). Let $(S_n)_{n \in \N}$ be the sequence of partial sums of the series  $\sum_{j=0}^\infty b_j$. Observe that for every $n \in \Z_{+}$, we have
       $$
            S_{\beta_n}:=  \sum_{j=0}^{\beta_n} b_j= \sum_{k=0}^n \sum_{j \in \P_k} b_j  = \sum_{k=0}^n  (-1)^ka_k. 
       $$
    Thus, $\left(S_{\beta_n}\right)_{n \in \Z_{+}}$ converges to the same limit of the series $\sum_{n=0}^\infty (-1)^na_n$. Furthermore, for $ \beta_n < l < \beta_{n+1}$, we have $S_{\beta_n} \leq S_l \leq S_{\beta_{n+1}}$ when $n$ is odd, and $S_{\beta_n} \geq S_l \geq S_{\beta_{n+1}}$ when $n$ is even. This implies (i).
    
To show (ii), let $p \in  \Z_{+} \setminus \{0\} $. Then there is $n_p \in \Z_{+}$ such that $ p < |\P_{n_p}|$. Thus,  $ p < \beta_{n_p}$ and for each $N>{n_p}$ we obtain
        \begin{align*}
            \sum_{j=0}^{\beta_N} |b_j+b_{j+1} + \cdots + b_{j+p-1}| & \geq 
            \sum_{n=n_p}^{N} \sum_{j = \alpha_n}^{\beta_n-p+1} |b_j+b_{j+1} + \cdots +  b_{j+p-1}| \\
            & = \sum_{n=n_p}^{N} \sum_{j = \alpha_n}^{\beta_n-p+1} \frac{a_n}{\|f_n\|_1  }(f_n(j)+ \cdots + f_n(j+p-1)) \\
            & \geq \sum_{n=n_p}^{N} \frac{a_n}{\|f_n\|_1 } \|f_n\|_1 = \sum_{n= n_p}^{N} a_n.
        \end{align*}
From this, we deduce that the series given in (ii) diverges. 

It remains to prove (iii). For all $n \in \Z_+$ and $j \in \P_n$ we have
        \begin{align*}
            |jb_j| &= \frac{j a_n}{\|f_n\|_1 } f_n(j ) \leq  \frac{\beta_n a_n \| f_n\|_\infty}{\|f_n\|_1}  \leq  L_1.
        \end{align*}
This means that $(j b_j)_{j \in \Z_+}$ is bounded. To show  that the sequence $(j^2(b_j -b_{j+1}))_{j \in \Z_+}$ is also bounded, we consider two cases. If $j \in \P_n \setminus \{ \beta_n\}$, then
\begin{align*}
    |j^2(b_j-b_{j+1})| &\leq  \frac{\beta_n^2 a_n}{\|f_n\|_1} |f_n(j) -f_n(j+1)| \leq    L_2.
\end{align*}
While for $j= \beta_n$ we have
\begin{align*}
    |j^2(b_j-b_{j+1})| & = \frac{\beta_n^2a_n}{\|f_n\|_1}  f_n(\beta_n)  \leq  L_2.
\end{align*} 
 
This concludes the proof.
 \qedhere
\end{proof}
As an illustration, we provide a concrete example of a sequence of the type described in Lemma~\ref{SpecialSequence}.  
\begin{exam} Let $\P_0:=\{0,1\}$ and $\P_n:=\{ j \in \Z_{+}: 2^{n} \leq j < 2^{n+1}\}$ for each $n \geq 1$. We let $f_0(0):=0$, $f_0(1):=1$, and $f_0(j):=0$ if $j>1$.  For $n  \geq 1$, we define  $f_n: \Z_+ \to [0, \infty)$ by 
$$ f_n(j):= \left\{ \begin{array}{ll}
  \min \{ j -2^{n} , 2^{n+1}-j\}   &  \mbox{ if } j \in \P_n,\\
   0  & \mbox{ otherwise.} 
\end{array} \right.
$$
Then $\|f_n\|_1 = 2^{2(n-1)} $ and $\| f_n \|_\infty =2^{n-1}$ for $n \geq 1$, and the sequence $(f_n)_{n \in \Z_+}$ is subordinate to the partition $(\P_n)_{n \in \Z_+}$ of $ \Z_+$.  Furthermore,  for any sequence $(a_n)_{n \in \Z_+}$ of positive real numbers for which the series $\sum_{n=0}^\infty (-1)^na_n$ is conditionally convergent, the conditions \eqref{BoundL1} and \eqref{BoundL2} hold, and so the sequence $(b_j)_{j \in \Z_+}$ defined by \eqref{bnsequence} fulfills all assertions of Lemma~\ref{SpecialSequence}.
\end{exam}

We conclude this section with a proof of the existence of potentials that satisfy the hypotheses of Theorem~\ref{main1}, but do not fulfill the conditions introduced in \cite{goh}.

\begin{prop}\label{counterex}
    There exist $V \in \K(\H_\G)$ such that $(\bf{A1})$ and Assumption \ref{A1} are satisfied for $k=1$, but $(\bf{A2})$ is not fulfilled. 
\end{prop}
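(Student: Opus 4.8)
The plan is to manufacture $V$ from a single scalar sequence produced by Lemma~\ref{SpecialSequence}, placed in one diagonal entry of $V_0$, and to take $V_1=0$. Concretely, I would first fix a real sequence $(b_j)_{j\in\Z_+}$ satisfying conclusions (i)--(iii) of Lemma~\ref{SpecialSequence} (for instance the explicit sequence of the example preceding this proposition). Since $\sum_j b_j$ converges by (i), the tail sums $W(n):=\sum_{i\geq n}b_i$ are well defined, real, and tend to $0$ as $n\to\infty$. I would then set $V:=V_0$ with $V_0(n):=\mathrm{diag}(W(n),0)$ for $n\in\Z_+$ (extended by $0$ on $\Z_-$ when $\G=\Z$) and $V_1:=0$, which is of the form \eqref{GeneralPotential} with $N=1$. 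Because $W$ is real and each $V_0(n)$ diagonal, $V_0=V_0^*$; because $W(n)\to 0$, $V\in\K(\H_\G)$; and the entrywise constraints of $(\bf{A1})$ hold trivially since $V_0^{12}\equiv 0\neq -1$ and $V_1^{21}\equiv 0\neq 1$. Thus $(\bf{A1})$ is satisfied.

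Next I would verify Assumption~\ref{A1} with $k=1$ by showing $V_0\in\Qq_{1,2}(\G)$ (while $V_1=0$ lies in every one of the relevant spaces). The key is the telescoping identities for the tail sums, $(W-\tau W)(n)=-b_{n-1}$ and $(W-2\tau W+\tau^2 W)(n)=b_{n-2}-b_{n-1}$. Since $V_0(n)$ is diagonal, its matrix norm is a fixed multiple of $|W(n)|$, so it suffices to bound $n(W-\tau W)(n)=-nb_{n-1}$ and $n^2(W-2\tau W+\tau^2W)(n)=n^2(b_{n-2}-b_{n-1})$; both are bounded uniformly in $n$ by conclusion (iii) of Lemma~\ref{SpecialSequence}, after the harmless index shifts $n\mapsto n-1$ and $n\mapsto n-2$ whose weight ratios tend to $1$. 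This gives $q_{1,1}(V_0)+q_{1,2}(V_0)<\infty$, hence $V_0\in\Qq_{1,2}(\G)\subseteq \S(\G)+\M_1(\G)+\Qq_{1,2}(\G)$, so Assumption~\ref{A1} holds for $k=1$.

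Finally, to show that $(\bf{A2})$ fails I would invoke conclusion (ii). For any $p\in\Z_+\setminus\{0\}$ and $n\geq p$ one has $\bigl(\at{V_0^{11}}{\Z_+}-\tau^{p}\at{V_0^{11}}{\Z_+}\bigr)(n)=W(n)-W(n-p)=-(b_{n-p}+\cdots+b_{n-1})$, and summing in $n$ (substituting $m=n-p$) yields $\sum_n|b_{n-p}+\cdots+b_{n-1}|=\sum_{m\geq 0}|b_m+\cdots+b_{m+p-1}|=\infty$ by (ii). Hence $\at{V_0^{11}}{\Z_+}-\tau^{p}\at{V_0^{11}}{\Z_+}\notin\ell^1$ for every $p$, so the requirement on the diagonal entry $j=1$ in $(\bf{A2})$ can never be met, and $(\bf{A2})$ fails. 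I expect the main point of care to be bookkeeping rather than any genuine difficulty: keeping the index shifts straight in the $\Qq_{1,2}$ estimates, and, in the case $\G=\Z$, checking that extending $V_0$ by zero on $\Z_-$ introduces only finitely many ``seam'' terms near $n=0$ (each multiplied by a bounded weight), so that the suprema defining $q_{1,1}$ and $q_{1,2}$ stay finite over all of $\Z$ while the restriction to $\Z_+$ driving the failure of $(\bf{A2})$ is left unchanged.
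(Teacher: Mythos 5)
Your proposal is correct and follows essentially the same route as the paper: the paper also takes $V=V_0$ with all entries zero except $V_0^{11}$, which it defines by the increment relation $V_0^{11}(n+1)-V_0^{11}(n)=b_n$ with $V_0^{11}(0)=-\sum_{j\ge 0}b_j$ — i.e.\ exactly your tail sums $W(n)$ up to an irrelevant sign — and then invokes items (i), (iii) of Lemma~\ref{SpecialSequence} for compactness, $(\bf{A1})$ and membership in $\Qq_{1,2}(\G)$, and item (ii) for the failure of $(\bf{A2})$. Your write-up merely makes the telescoping and index-shift bookkeeping explicit, which the paper leaves implicit.
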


\begin{proof}
Let  $(b_j)_{j \in \Z_{+}}$ be any sequence of the form \eqref{bnsequence} for which all conclusions of Lemma \ref{SpecialSequence} hold. From item (i) of this lemma, we can define $V:= V_0 \in \B(\H_\G)$ by $V_0^{12} = V_0^{21} = V_0^{22} =0$, $V_0^{11}(n)=0$ if $n <0$, and 
\begin{equation*}
V_0^{11}(0) = - \sum_{j=0}^\infty b_j,  \quad V_0^{11}(n+1)- V_0^{11}(n)=b_n \quad \mbox{for} \quad n \in \Z_{+}.
\end{equation*}
According to item (iii) from Lemma \ref{SpecialSequence}, $V \in \K(\H_{\G}) \cap \Qq_{1,2}(\G)$ is self-adjoint and satisfies  $(\bf{A1})$, while item (ii) from this lemma shows that $(\bf{A2})$  is not fulfilled.
\end{proof}

\begin{rem} By Lemma \ref{AppendixLemma}, the potential $V \in \K(\H_\G)$ defined in the proof of Proposition~\ref{counterex} belongs to $\Qq_{1,2}(\G) \setminus \left( \S(\G) \cup \M_1(\G) \right)$. 
\end{rem}

\section{A brief review on positive commutators methods}\label{ReviewMourre}

This section introduces the key ideas from Mourre theory that will be used later. Most of the notations and definitions are adopted from \cite{abmg}, to which we refer for further details. Through this section, unless explicitly stated otherwise, let  $ I\subseteq \mathbb{R}$ denote a Borel set, $A$ a self-adjoint operator acting on a separable complex  Hilbert space $\Hh$, with domain $\D(A)$, and $H, T \in \B\left(\Hh\right)$ with $H$ self-adjoint.

The Mourre theory refers to the positive commutator method introduced in \cite{EM}. This theory enables the study of the nature of the spectrum of $H$ in $I$,  based on the regularity of $H$ with respect to $A$ and the local positivity of the commutator $[iA, H]$.   We begin by defining the regularity classes associated to $A$.

\begin{defi}\label{RegularityClasses} 
\begin{itemize}
    \item[(i)] Let $m \in \Z_+$. We say that $T$ is of class $C^m(A)$, and write $ T \in C^m(A)$, if the map $\Tau: \R \to \B\left(\Hh\right)$, defined by $\Tau(t):=e^{i A t} T e^{-iAt} $ for $t \in \R$, is of class $C^m(\R)$ with respect to strong topology of $\B\left(\Hh\right)$. The notation $T \in C^\infty(A)$ means that $T \in C^m(A) $ for all $m \geq 1$. 
    \item[(ii)] We say that $T$ is of class $\Cc^{1,1}(A)$ provided that
    $$\int_0^1 \| e^{i At}T e^{-i At}+e^{-i At}T e^{i At}-2T\| \, \frac{dt}{t^2}<\infty.$$
In this case, we write $ T \in \Cc^{1,1}(A)$.

\end{itemize}
\end{defi}

\begin{rem}\label{starAlgebras}
\begin{enumerate}
    \item[(i)] The operator $T$ is of class $C^1(A)$ if and only if $T$ leaves the domain of $A$ invariant,  and the operator $i(AT-TA)$ defined on $\D(A)$ has a bounded extension to $\Hh $. We denote this bounded extension by $[iA,T]$. In this case, $\Tau'(0) = [iA,T].$
    \item[(ii)] $T  \in C^2(A)$ is equivalent to the property that $T\in C^1(A)$ and $ [iA, T ] \in C^1(A)$, so that the second order commutator $ [iA, [iA, T]]$ has a bounded extension to the whole space. 
    \item[(iii)] The sets $C^m(A)$  and $\Cc^{1,1}(A)$ are Banach $*$-subalgebras of $\B\left(\Hh\right)$ and the inclusions $C^m(A) \subset  \Cc^{1,1}(A) \subset  C^1(A)$ are true for all $m \geq 2$.
\end{enumerate}
\end{rem}

From now on, we adopt the following notation: given two operators $R, T \in \B\left(\Hh \right)$, we write $R \simeq T$  to express that $R-T \in \K(\Hh )$, and $ R \gtrsim T$ means that $R \geq T + K$ for some $K \in \K(\Hh )$. 

\begin{defi}\label{DefMourre} Let $H \in C^1(A)$. The operator   {$A $ is conjugate to $H $ on $I $}   if there exist $ c >0 $  such that
\begin{equation}\label{ME}
    E_I(H)[iA,H]E_I(H) \gtrsim c E_I(H). 
\end{equation}
This is known as a  {Mourre estimate} for the triplet $(H, A, I)$. If 
\begin{equation}\label{SME}
    E_I(H)[iA,H]E_I(H) \geq c E_I(H),
\end{equation}
then {$A $ is strictly conjugate to $H $ on $I$}, and this inequality is called a   {strict Mourre estimate} for the triplet $(H, A, I)$. We also define the subsets $ \tilde{\mu}_{A}^{\pm}(H)$ and $ {\mu}_{A}^{\pm}(H)$ of $\R$  by
\begin{align*}
     \tilde{\mu}_{A}^{\pm}(H)&:= \{ x \in \sigma_{\ess}(H): \exists c>0 \mbox{ and an open interval } I ; x \in I\mbox{ and } \eqref{ME} \mbox{ holds with} \pm A \}, \\
{\mu}_{A}^{\pm}(H)&:= \{ x \in \sigma_{\ess}(H): \exists c>0 \mbox{ and an open interval } I; x \in I\mbox{ and } \eqref{SME} \mbox{ holds with} \pm A\}.
\end{align*}
We also let $\tilde{\mu}_A(H):= \tilde{\mu}_{A}^{+}(H) \cup \tilde{\mu}_{A}^{-}(H)$, $\mu_{A}(H):= \mu_{A}^{+}(H) \cup \mu_{A}^{-}(H)$, and
\begin{equation}
\kappa_A(H):= \sigma_{\ess}(H) \setminus  \tilde{\mu}_A(H).
\end{equation}
We say that $\kappa_A(H)$ is the set of critical points of $H$ with respect to $A.$
\end{defi}

In Definition \ref{DefMourre}, we have  $\pm A$ is locally conjugate  (respectively, strictly conjugate) to $H$ on $  \tilde{\mu}_{A}^{\pm}(H)$ (respectively, $ {\mu}_{A}^{\pm}(H)$).  The following result describes the difference between $\tilde{\mu}_A(H)$ and $\mu_{A}(H)$.
\begin{theo}\label{MainMourre1}

Let $H \in C^1(A)$. Then
\begin{itemize}
    \item[(i)] The spectrum of $H$ in $\mu_A(H) $ is purely continuous and  $ \tilde{\mu}_A(H) \setminus \mu_A(H)$ consists of eigenvalues of $H$ of finite multiplicity. 
    \item[(ii)] If  $\con{I} \subseteq \tilde{\mu}_A(H)$, the set $\E_p(H) \cap I$ is finite. In particular, the    
possible accumulation points of $ \E_p(H)$ belong to $ \kappa_A(H)$.
\end{itemize}
\end{theo}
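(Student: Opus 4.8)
The plan is to make the Virial theorem the engine of the whole argument. Since $H \in C^1(A)$, the commutator $[iA,H]$ extends to a bounded operator, so $\langle\psi,[iA,H]\psi\rangle$ is defined for every $\psi$, and the Virial theorem (which I would quote from \cite{abmg} rather than reprove) asserts that this expectation vanishes whenever $H\psi=\lambda\psi$; equivalently $E_{\{\lambda\}}(H)[iA,H]E_{\{\lambda\}}(H)=0$. The same identity holds with $\pm A$, so throughout I may freely pass between the two signs according to whether a point lies in $\tilde\mu_A^{+}(H)$ or $\tilde\mu_A^{-}(H)$.

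The technical core is a single finiteness lemma: on any open interval $I$ where \eqref{ME} holds with a compact $K$, the total multiplicity of $\E_p(H)\cap I$ is finite. To see this I would suppose the contrary and extract an orthonormal sequence $(\psi_m)$ of eigenvectors with eigenvalues in $I$ (orthonormality is automatic across distinct eigenvalues and can be arranged inside a fixed eigenspace). Each $\psi_m=E_I(H)\psi_m$, so compressing \eqref{ME} and invoking the Virial theorem gives $0=\langle\psi_m,[iA,H]\psi_m\rangle\geq c+\langle\psi_m,K\psi_m\rangle$, whence $\langle\psi_m,K\psi_m\rangle\leq -c<0$; but $\psi_m\rightharpoonup 0$ and the compactness of $K$ force $\langle\psi_m,K\psi_m\rangle\to 0$, a contradiction. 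This lemma yields at once the finite-multiplicity statement in (i) and the finiteness in (ii).

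With this in hand I would prove (i) in three steps. First, if $x\in\mu_A(H)$ then \eqref{SME} holds on an open $I\ni x$ with $\pm A$; applying the Virial theorem to a putative normalized eigenvector $\psi=E_I(H)\psi$ gives $0\geq c\|\psi\|^2>0$, so there are no eigenvalues in $\mu_A(H)$ and the spectrum there is purely continuous. Second, the finiteness lemma shows every eigenvalue in $\tilde\mu_A(H)$ has finite multiplicity. Third, I would show $\tilde\mu_A(H)\setminus\mu_A(H)\subseteq\E_p(H)$: given $\lambda\in\tilde\mu_A(H)$ that is \emph{not} an eigenvalue, the projections $E_{I'}(H)$ converge strongly to $E_{\{\lambda\}}(H)=0$ as $I'\downarrow\{\lambda\}$, so $\|E_{I'}(H)K\|\to0$ by compactness of $K$; compressing \eqref{ME} by $E_{I'}(H)$ for $I'$ small enough that $\|E_{I'}(H)KE_{I'}(H)\|<c/2$ produces the strict estimate $E_{I'}(H)[iA,H]E_{I'}(H)\geq (c/2)E_{I'}(H)$, i.e. $\lambda\in\mu_A(H)$. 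Contrapositively, the points of $\tilde\mu_A(H)\setminus\mu_A(H)$ are exactly eigenvalues, of finite multiplicity.

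For (ii), I would cover the compact set $\overline{I}\subseteq\tilde\mu_A(H)$ by finitely many open intervals on each of which \eqref{ME} holds with a compact $K_j$; the finiteness lemma applied to each and summed over the cover shows $\E_p(H)\cap I$ is finite. Finally, any accumulation point of $\E_p(H)$ lies in $\sigma_{\ess}(H)$ (it is a non-isolated point of $\sigma(H)$) and cannot lie in $\tilde\mu_A(H)$, since every point of the latter has a neighborhood meeting $\E_p(H)$ in a finite set; hence it belongs to $\kappa_A(H)=\sigma_{\ess}(H)\setminus\tilde\mu_A(H)$. I expect the only genuinely delicate step to be the upgrade from \eqref{ME} to \eqref{SME} in the third part of (i): it rests on the norm-convergence $\|E_{I'}(H)K\|\to0$, which combines the strong convergence $E_{I'}(H)\to0$ with the compactness of $K$; every other step is a direct application of the Virial identity together with the elementary fact that a compact operator is asymptotically negligible along weakly null sequences.
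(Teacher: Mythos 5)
Your proposal is correct and takes essentially the same route as the paper: both rest on the Virial theorem of \cite[Proposition 7.2.10]{abmg}, with part (ii) obtained from the compactness of $\con{I}$ and a finite covering argument. The only difference is expository: the paper delegates to \cite{abmg} the consequences of the Virial theorem that you prove in detail (the finiteness lemma via weakly null orthonormal sequences of eigenvectors, and the upgrade from \eqref{ME} to \eqref{SME} at points that are not eigenvalues), and your derivations of these are sound.
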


\begin{proof}
    Assertion (i) is a consequence of the Virial Theorem \cite[Proposition 7.2.10]{abmg}. To show (ii), observe that if  $\con{I} \subseteq \tilde{\mu}_A(H)$, then $\con{I}$ is compact since $ \tilde{\mu}_A(H) \subseteq \sigma(H)$ is bounded. Hence,  taking a finite open cover of $ \con{I}$, and using (i), we deduce that $\E_p(H) \cap I$ is finite. This, in turn, yields the claimed assertion concerning accumulation points of $\E_p(H)$.
\end{proof}

If $H \in \Cc^{1,1}(A)$, then $H$ has no singular continuous spectrum on $\tilde{\mu}_A(H)$. Recall that for each $s \in \R$, $\Hh_s(A)$ denotes the Sobolev space associated to $A$.

\begin{theo}\label{MainMourre2}
Let $H \in \Cc^{1,1}(A)$, $s>1/2$ and $\K= \Hh_s(A)$.
Then  the following LAP  holds: the holomorphic map $ \C_{\pm} \ni z \mapsto (H-z)^{-1} \in \B\left(\K,\K^*\right)$ extends to a weak$^*$-continuous map on $\C_{\pm} \cup \mu_A(H)$.  It follows that the spectrum of $H$ is purely absolutely continuous on $\mu_A(H)$ and $\sic(H) \cap \tilde{\mu}_A(H)= \emptyset$.
\end{theo}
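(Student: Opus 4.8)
The plan is to derive the statement from the abstract limiting absorption principle of Mourre theory in its sharp $\Cc^{1,1}(A)$ form, as developed in \cite{abmg}. The two ingredients we can hand to that machinery are exactly the regularity hypothesis $H \in \Cc^{1,1}(A)$ and, by the very definition of $\mu_A(H)$, the availability of a \emph{strict} Mourre estimate \eqref{SME} on a neighborhood of each of its points (with $+A$ on $\mu_A^+(H)$ and with $-A$ on $\mu_A^-(H)$).

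First I would establish the LAP on $\mu_A(H)$. Fix $x \in \mu_A(H)$; by definition there are $c>0$ and an open interval $I \ni x$ on which the strict estimate \eqref{SME} holds for $A$ or for $-A$. The central theorem of \cite{abmg} asserts that $H \in \Cc^{1,1}(A)$ together with such a strict estimate yields, for every $s>1/2$ and $\K = \Hh_s(A)$, a uniform bound $\sup_{\Rp z \in I', \, \Ip z \neq 0} \| \bra A\ket^{-s}(H-z)^{-1}\bra A\ket^{-s}\| < \infty$ on each $I' \Subset I$, together with the existence of the boundary values $(H-\lambda\pm i0)^{-1} \in \B(\K,\K^*)$ as weak$^*$-continuous functions of $\lambda \in I$. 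Since one strict estimate already controls the resolvent from both half-planes, it is immaterial whether the estimate available at $x$ involves $+A$ or $-A$. As $\mu_A(H)$ is the union of these open intervals $I$ and the LAP is a local assertion, patching the local boundary values over this cover produces the desired weak$^*$-continuous extension of $z \mapsto (H-z)^{-1}$ to $\C_\pm \cup \mu_A(H)$.

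From these boundary values I would extract the spectral consequences. For $f$ in the dense subspace $\K \subseteq \Hh$, the continuity of $\lambda \mapsto \Ip \innp{f}{(H-\lambda-i0)^{-1}f}$ on $\mu_A(H)$ exhibits a continuous density for the measure $\lambda \mapsto \innp{f}{E_{(-\infty,\lambda]}(H)f}$ there, so this measure is purely absolutely continuous on $\mu_A(H)$; density of $\K$ then shows that the spectrum of $H$ is purely absolutely continuous on $\mu_A(H)$, whence $\sic(H)\cap\mu_A(H)=\emptyset$. To reach $\sic(H)\cap\tilde\mu_A(H)=\emptyset$ I would invoke Theorem \ref{MainMourre1}(i), by which $\tilde\mu_A(H)\setminus\mu_A(H)$ consists solely of eigenvalues of $H$: working locally on a compact subinterval where only finitely many such eigenvalues occur, one removes them by projecting onto the orthogonal complement of the corresponding finite-dimensional eigenspace, restores a strict Mourre estimate on the reduced operator, and concludes that no singular continuous mass sits at these points either.

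The main obstacle is the abstract LAP estimate itself, that is, the implication from the strict Mourre estimate plus $\Cc^{1,1}(A)$ regularity to the uniform resolvent bound and the existence of boundary values. This is precisely the step where the optimal $\Cc^{1,1}(A)$ hypothesis is essential — it cannot be relaxed to $C^1(A)$ and is what makes the result sharp — and its proof rests on the differential-inequality and real-interpolation arguments of \cite{abmg}; I would cite it as a black box rather than reproduce it. A secondary, more technical point is the treatment of the singular continuous spectrum at the eigenvalues populating $\tilde\mu_A(H)\setminus\mu_A(H)$, handled by the localization-and-reduction scheme sketched above.
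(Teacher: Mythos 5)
Your proposal follows the same route as the paper: the paper states this theorem as part of its review of Mourre theory and offers no proof of its own, deferring (in the remark that follows it) to \cite{abmg}; the core black box you invoke — strict Mourre estimate plus $\Cc^{1,1}(A)$ regularity implies the uniform resolvent bound and weak$^*$-continuous boundary values in $\B(\K,\K^*)$, locally on an interval — is exactly the ABMG limiting absorption theorem that the paper relies on. Your patching over the open cover of $\mu_A(H)$, the irrelevance of the sign of $A$ (since $\Hh_s(A)=\Hh_s(-A)$ and the boundary values exist from both half-planes), and the Stone-formula argument giving purely absolutely continuous spectrum on $\mu_A(H)$ are all correct and standard.

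The one step I would not accept as written is your treatment of $\sic(H)\cap\tilde{\mu}_A(H)=\emptyset$ at the points of $\tilde{\mu}_A(H)\setminus\mu_A(H)$. Your plan — project onto the orthogonal complement of the finite-dimensional eigenspaces and ``restore a strict Mourre estimate on the reduced operator'' — is not justified in this generality: $A$ does not leave $\mathrm{Ran}(1-P)$ invariant, and under mere $\Cc^{1,1}(A)$ regularity of $H$ the eigenprojection $P$ has no regularity with respect to $A$ (eigenvectors need not even lie in $\D(A)$), so there is no conjugate-operator framework for the ``reduced operator'' to which the LAP machinery could be applied. Fortunately, no such argument is needed. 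By Theorem \ref{MainMourre1}(i), $\tilde{\mu}_A(H)\setminus\mu_A(H)\subseteq\E_p(H)$, and by Theorem \ref{MainMourre1}(ii) this set is locally finite in the open set $\tilde{\mu}_A(H)$. For any vector $f$, the singular continuous part of its spectral measure is a continuous measure, hence assigns zero mass to any locally finite set of points, and it also vanishes on $\mu_A(H)$ by the purely absolutely continuous spectrum you already established there. Hence the singular continuous part of every spectral measure vanishes on all of $\tilde{\mu}_A(H)$, which gives $\sic(H)\cap\tilde{\mu}_A(H)=\emptyset$ directly. With this replacement your proof is complete.
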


\begin{rem}
   In Theorem \ref{MainMourre2}, a LAP can be guaranteed for the Banach space $(D(A),\Hh)_{1/2,1}$, obtained via real interpolation (see \cite{abmg} for a proof). Moreover, similar observations to those made in Remark \ref{RmkLAP1} hold here as well. 
\end{rem}

Theorems \ref{MainMourre1} and \ref{MainMourre2} provide useful connections between the spectral properties of $H$ and the regularity of $H$ with respect to $A$. Now, a natural question is whether these spectral properties remain stable under suitable compact perturbations. In this context, it is useful to consider the following definition.

\begin{defi}\label{DefCritandAdmiss}
Let    $ H \in \Cc^{1,1}(A)$. We say that an operator $V \in \B(\Hh)$ is an  {admissible perturbation for $H$ with respect to} $A$ provided that $V$ is self-adjoint and $V  \in \K(\Hh) \cap \Cc^{1,1}(
A)$.
\end{defi}

The next proposition says that the spectral properties of $H$ are essentially stable under admissible perturbations. 

\begin{prop}\label{MainMourrePerturbed}
    Assume that $ H \in \Cc^{1,1}(A)$ and $V$ is an admissible perturbation for $H$ with respect to $A$, and let $\tilde{\kappa}_A(H,V):=  \kappa_A(H) \cup \E_p(H) \cup E_p(H+V)$. Then $\sigma_{\ess}(H+V) = \sigma_{\ess}(H)$  and  the following assertions hold:
    \begin{itemize}
    \item[(i)] $H+V \in \Cc^{1,1}(
A)$, $[iA, V]$ is a compact operator, $\tilde{\mu}_A(H)=\tilde{\mu}_A(H+V)$ and $ \kappa_A(H)=\kappa_A(H+V)$.
        \item[(ii)] The conclusions from Theorem \ref{MainMourre1} hold for $H$ and $H+V$. In particular, the set  $\tilde{\kappa}_A(H,V)$ is closed.
        \item[(iii)] For all $s>1/2$ and $\K= \Hh_s(A)$, the following LAP   is satisfied: the holomorphic map $ \C_{\pm} \ni z \mapsto (H-z)^{-1} \in \B\left(\K,\K^*\right)$ extends to a weak$^*$-continuous map on $\R \setminus \tilde{\kappa}_A(H,V)$.  In particular, if $\tilde{\kappa}_A(H,V)$ is countable then $\sic(H+V) = \emptyset$ . 
    \end{itemize}
\end{prop}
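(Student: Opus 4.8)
The plan is to derive everything from the abstract Mourre results recalled above, the compactness of $V$, and the compactness of the commutator $[iA,V]$. First, since $V \in \K(\Hh)$, the Weyl theorem on stability of the essential spectrum under compact perturbations immediately gives $\sigma_{\ess}(H+V) = \sigma_{\ess}(H)$.

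For assertion (i), the membership $H+V \in \Cc^{1,1}(A)$ follows at once from $H,V \in \Cc^{1,1}(A)$ and the fact that $\Cc^{1,1}(A)$ is a vector space (Remark \ref{starAlgebras}(iii)). The admissibility of $V$ enters through the compactness of $[iA,V]$: since $V$ is compact and of class $C^1(A)$ (because $\Cc^{1,1}(A)\subseteq C^1(A)$), the bounded extension $[iA,V]$ is again compact, a standard fact of the theory \cite{abmg}. To obtain $\tilde{\mu}_A(H) = \tilde{\mu}_A(H+V)$ I would transfer the Mourre estimate \eqref{ME} between $H$ and $H+V$. Writing $[iA,H+V] = [iA,H] + [iA,V]$ with $[iA,V]$ compact, and using that $\varphi(H+V)-\varphi(H)\in\K(\Hh)$ for $\varphi\in C_c^\infty(\R)$ (a consequence of $V$ compact, via the resolvent identity), one checks that $E_I(H+V)[iA,H+V]E_I(H+V) \gtrsim c\,E_I(H+V)$ holds on an open interval $I$ precisely when the analogous estimate holds for $H$, since both are formulated modulo $\K(\Hh)$. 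Hence the local conjugation sets coincide, and $\kappa_A(H) = \sigma_{\ess}(H)\setminus\tilde{\mu}_A(H) = \sigma_{\ess}(H+V)\setminus\tilde{\mu}_A(H+V) = \kappa_A(H+V)$ follows from the equalities already established. I expect this transfer of the Mourre estimate to be the main technical obstacle, as it simultaneously requires the compactness of $[iA,V]$ and control of $E_I(H+V)-E_I(H)$ modulo compact operators.

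For assertion (ii), both $H$ and $H+V$ lie in $C^1(A)$, so Theorem \ref{MainMourre1} applies to each; in particular the possible accumulation points of $\E_p(H)$ and of $\E_p(H+V)$ lie in $\kappa_A(H)$ and $\kappa_A(H+V)=\kappa_A(H)$ respectively. The closedness of $\tilde{\kappa}_A(H,V) = \kappa_A(H)\cup\E_p(H)\cup\E_p(H+V)$ then follows: $\kappa_A(H)$ is closed, being $\sigma_{\ess}(H)$ minus the relatively open set $\tilde{\mu}_A(H)$, and any limit of a sequence of distinct points of the union lies, by the pigeonhole principle, among the accumulation points of one of the three sets, hence in $\kappa_A(H)\subseteq\tilde{\kappa}_A(H,V)$.

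For assertion (iii), I would apply Theorem \ref{MainMourre2} to $H+V\in\Cc^{1,1}(A)$, obtaining the LAP on $\C_\pm\cup\mu_A(H+V)$, and then show that $\R\setminus\tilde{\kappa}_A(H,V)$ is covered by the resolvent set of $H+V$ together with $\mu_A(H+V)$. Indeed, a real point outside $\tilde{\kappa}_A(H,V)$ either lies in the resolvent set, where the resolvent extends trivially and continuously to the axis, or in $\sigma_{\ess}(H+V)\setminus\kappa_A(H+V) = \tilde{\mu}_A(H+V)$; in the latter case, since $\tilde{\mu}_A(H+V)\setminus\mu_A(H+V)$ consists of eigenvalues of $H+V$ by Theorem \ref{MainMourre1}(i) and such points are excluded, the point belongs to $\mu_A(H+V)$. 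This yields the LAP on all of $\R\setminus\tilde{\kappa}_A(H,V)$, which in turn forces $\sic(H+V)\subseteq\tilde{\kappa}_A(H,V)$. Finally, if $\tilde{\kappa}_A(H,V)$ is countable, then the singular continuous part of the spectral measure, being non-atomic, assigns zero mass to this countable set, whence $\sic(H+V)=\emptyset$.
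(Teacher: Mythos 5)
Your proposal follows the same route as the paper: Weyl's theorem for the essential spectrum, stability of $\tilde{\mu}_A$ under compact perturbations with compact commutator for (i), Theorem \ref{MainMourre1} for (ii), and Theorem \ref{MainMourre2} together with the inclusion $\sigma_{\ess}(H)\setminus\tilde{\kappa}_A(H,V)\subseteq\mu_A(H)\cap\mu_A(H+V)$ for (iii); parts (ii) and (iii) of your argument are correct. However, there is a genuine gap at the crucial step of (i): you justify the compactness of $[iA,V]$ by the claim that $V\in\K(\Hh)\cap C^1(A)$ forces $[iA,V]\in\K(\Hh)$, calling this ``a standard fact of the theory''. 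That claim is false. Take $\Hh=\ell^2(\Z_+)$ with orthonormal basis $(e_n)_{n\ge 1}$, let $Ae_n:=ne_n$, and $V:=\sum_{n\ge1}n^{-1}\,|e_{2n}\ket\bra e_n|$. Then $V$ is compact, $V\Hh\subseteq\D(A)$, and on $\D(A)$ one computes $AV-VA=\sum_{n\ge1}|e_{2n}\ket\bra e_n|$, a partial isometry with infinite-dimensional range. So $V\in\K(\Hh)\cap C^1(A)$ while $[iA,V]$ is bounded and non-compact. The underlying reason is that strong $C^1(A)$-regularity only exhibits $[iA,V]$ as a \emph{strong} limit of the difference quotients $t^{-1}\left(e^{iAt}Ve^{-iAt}-V\right)$, and strong limits of compact operators need not be compact.

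The gap is repairable with the hypotheses you actually have, and this is precisely what the paper's citation of \cite[Proposition 2.1]{g-m-1} accomplishes: admissibility gives $V\in\Cc^{1,1}(A)$, and $\Cc^{1,1}(A)$ is contained in the \emph{uniform} class $C^1_u(A)$, meaning that $t\mapsto e^{iAt}Ve^{-iAt}$ is $C^1$ in operator norm (see \cite{abmg}, Chapter 5). For $V\in C^1_u(A)\cap\K(\Hh)$ the difference quotients above converge in norm, each of them is compact, hence $[iA,V]\in\K(\Hh)$. (Consistently, the operator $V$ in the counterexample above fails to be of class $\Cc^{1,1}(A)$.) Once this is fixed, your transfer of the Mourre estimate from $H$ to $H+V$ --- which uses exactly the compactness of $[iA,V]$ and of $\varphi(H+V)-\varphi(H)$ for $\varphi\in C_c^\infty(\R)$ --- goes through, and is in substance the proof of \cite[Theorem 7.2.9]{abmg}, which the paper invokes directly for this step.
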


\begin{proof}
Thanks to the Weyl criterion we know that $\sigma_{\ess}(H+V) = \sigma_{\ess}(H)$. Assertion (i)  follows from Remark \ref{starAlgebras}, \cite[Proposition 2.1]{g-m-1} and \cite[Theorem 7.2.9]
{abmg}. The statement given in (ii) is a direct consequence of the fact that $H+V \in \Cc^{1,1}(
A) \subseteq C^1(A) $. Finally, (iii) follows from Theorem \ref{MainMourre2} and the inclusion $\sigma_{\ess}(H) \setminus \tilde{\kappa}_A(H,V) \subseteq \mu_A(H) \cap \mu_A(H+V)$. 
\end{proof}

\begin{rem} In Proposition \ref{MainMourrePerturbed}, the sets $\mu_A(H)$ and $\mu_A(H+V)$ may differ. Observe that the possible eigenvalues of $H+V$ embedded in $\sigma_{\ess}(H)$ are included in $\kappa_A(H) \cup (\tilde{\mu}_A(H) \setminus {\mu}_A(H+V))$.
\end{rem}

We close this section by defining another regularity class with respect to $A$, which provides a criterion to deal with the class $\Cc^{1,1}(A)$.

\begin{defi} We say that $T \in \B\left(\H \right)$ is of class  $\Cc^{0,1}(A)$ if
    $$\int_{0}^1 \| e^{iAt} T e^{-iAt} -T \| \, \frac{dt}{t} < \infty. $$
In this case, we write $T \in \Cc^{0,1}(A)$.
\end{defi}

\begin{rem}\label{RmkC01}
The above class is a linear subspace of $\B\left(\H\right)$, stable under adjuntion. Furthermore, according to inclusion (5.2.19) of \cite{abmg}, if $T \in C^1(A)$ and $[iA, T] \in \Cc^{0,1}(A)$, then $ T \in \Cc^{1,1}(A)$.
\end{rem}

\section{Mourre estimates for \texorpdfstring{$H_0^{(\G)}$}{ }}\label{MourreEstimatesFreeModel}

This section is concerned with the deduction of Mourre estimates on some subintervals of the essential spectrum of $H_0^{(\G)}$ for suitable conjugate operators. To lighten the presentation, throughout this section, we let $H_0:=H_0^{(\Z)}$,  $H_0^+:=H_0^{(\Z_+)}$, $X:=X^{(\Z)}$ and $X^+:=X^{(\Z_+)}$. We also denote by $\chi_\Lambda$ the characteristic function on a given set $ \Lambda \subseteq \R$.

\subsection{Conjugate operators for \texorpdfstring{$H_0$}{ }}\label{seq1}

In this section, we present the construction of a family of conjugate operators for $H_0$, based on ideas from \cite{g-m-2}. As in Section \ref{FreeModel}, let $\varphi := \varphi_2 - \varphi_1$, where  $\varphi_1,  \varphi_2 \in (-\pi, \pi]$ are such that  $a=|a|e^{i \varphi_1}$ and $  b= |b|e^{i \varphi_2}$. For each $k \in \Z_+ \setminus \{0\}$, let  $A_k$ be the symmetric  operator acting on $\ell^2_{c}(\Z; \C^2) $ by
\begin{equation}\label{SequenceAkinZ}
      -\frac{|a||b|}{4i}\left[\left(e^{ik \varphi}S^k - e^{-ik \varphi}S^{-k} \right)X + X\left(e^{ik \varphi}S^k - e^{-ik \varphi}S^{-k} \right) \right].
\end{equation}

An adaptation of the arguments given in \cite{g-g} shows that $A_k$ is essentially self-adjoint; we also denote by $A_k$ its unique self-adjoint extension. In order to formulate Mourre estimates for the operators $H_0$ and $A_k$ in  a concise way, let us introduce the functions $g_k: \R \setminus\{0\} \to \R$, for $k \geq 1$, defined for  $t \in \R \setminus \{ 0 \}$ by
\begin{equation}\label{gk}
    g_k(t)= g_0(t)U_{k-1}\left( \frac{t^2-\alpha^2-|a|^2-|b|^2}{2 |a||b|}\right),
\end{equation}  
and
\begin{align*}
    g_0(t)     
    &= -\frac{1}{4|t|} \left(t - \sqrt{\alpha^2+(|a|-|b|)^2} \right) \left(t + \sqrt{\alpha^2+(|a|-|b|)^2} \right) \times   \\
    & \quad \quad  \quad \quad  \left(t - \sqrt{\alpha^2+(|a|+|b|)^2} \right) \left(t + \sqrt{\alpha^2+(|a|+|b|)^2} \right). 
\end{align*}
Here $U_k$ stands for the $k$-th polynomial of Chebyshev of the second kind. This sequence of polynomials is defined by the recurrence relation
$$U_0(t):=1, \ U_1(t):=2t, \ U_{k+1}(t)= 2t U_{k}(t)-U_{k-1}(t), \quad k>0, \ t \in \R.$$
Furthermore, they satisfy the identity 
\begin{equation}\label{sinUk}
    \sin(t) U_{k-1}(\cos(t)) = \sin(k t), \quad k>0, \ t \in \R.
\end{equation}

With this notation, we formulate the first result of this section.

\begin{theo}\label{MainAk}
Let $I_{\pm}$ be the spectral bands of the operator $H_0$ defined in \eqref{leftband},  $k \in \Z_{+} \setminus\{0\}$ and $g_k$ be defined as in \eqref{gk}. The operator $H_{0}$ is of class $C^\infty(A_k)$. Furthermore, for any Borel set    $\Lambda_\pm \subseteq \R$ such that  $\con{\Lambda_\pm} \subseteq I_\pm$ it holds 
\begin{equation}\label{MEAkwithGap1}
E_{\Lambda_{\pm}}(H_0 ) [ i{A}_k, H_0]  E_{\Lambda_{\pm}}(H_0 )= \pm g_k(H_0)  E_{\Lambda_{\pm}}(H_0 ).
\end{equation}
In particular, if for each $j \in \{0, 1, \ldots, k\}$ we let  $\theta_j \in [0, \pi]$ be such that $  \cos (\theta_j + \varphi)= \cos\left( \frac{\pi j}{k} \right),$ then
\begin{equation}\label{kthetaj}
    \kappa_{ A_k}(H_0)
 =  \left\{ \pm \lambda\left( \theta_j \right) : j=0,1,\ldots,k\right\},
\end{equation}
and
\begin{equation}\label{muthetaj}
    \mu^+_{A_k}(H_0) = \bigcup_{j=0}^{\floor*{\frac{k-1}{2}}} \left( \lambda(\theta_{2j+1}),  \lambda(\theta_{2j}) \right), \quad  \quad \mu^{-}_{A_k}(H_0) = \bigcup_{j=0}^{\floor*{\frac{k}{2}}} \left( \lambda(\theta_{2j}),  \lambda(\theta_{2j-1}) \right),
\end{equation}
Moreover, $ \tilde{\mu}_{A_k}(H_0) = \mu_{A_k}(H_0)$.
\end{theo}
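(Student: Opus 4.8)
The plan is to reduce everything to the Fourier side, where $H_0 = \F^{-1}\widehat{H}_0\F$ acts by multiplication by the matrix symbol $h(\theta)$, and to compute the commutator $[iA_k, H_0]$ as a multiplication (more precisely, a differential-in-$\theta$) operator. First I would verify the $C^\infty(A_k)$ regularity: since $A_k$ is built from $X$ and finitely many shifts $S^{\pm k}$, conjugation $e^{iA_k t}H_0 e^{-iA_k t}$ is governed by iterated commutators of $H_0$ with $A_k$, and each such commutator is again a bounded block finite-difference operator (the factors of $X$ are killed by the difference structure, leaving bounded shift combinations). Establishing that all iterated commutators are bounded gives $H_0 \in C^m(A_k)$ for every $m$, hence $C^\infty(A_k)$; I expect this to be a routine but slightly tedious induction, most cleanly organized on the Fourier side where $X$ becomes $-i\,d/d\theta$ and $S^{\pm k}$ becomes $e^{\pm ik\theta}$.

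Next I would compute $[iA_k, H_0]$ explicitly. On the Fourier side $A_k$ corresponds to a first-order differential operator whose symbol involves $\sin(k\theta + k\varphi)$ times $d/d\theta$ (symmetrized), and the key point is that $[iA_k, H_0]$ becomes multiplication by a matrix that commutes with $h(\theta)$, so it is diagonalized by the same eigenprojections $P_\pm(\theta)$ of $h(\theta)$ onto the $\pm\lambda(\theta)$ eigenspaces. The eigenvalues of this commutator symbol should come out to be $\pm g_k(\lambda(\theta))$, where the appearance of the Chebyshev polynomial $U_{k-1}$ is exactly the identity \eqref{sinUk}: differentiating $\lambda^2(\theta) = \alpha^2+|a|^2+|b|^2+2|a||b|\cos(\theta+\varphi)$ produces a factor $\sin(\theta+\varphi)$, and multiplying by $U_{k-1}(\cos(\theta+\varphi))$ turns $\sin(\theta+\varphi)$ into $\sin(k(\theta+\varphi))$, which is precisely what the shift-by-$k$ structure of $A_k$ generates. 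Once this symbol computation is done, applying the spectral projection $E_{\Lambda_\pm}(H_0)$, which on the Fourier side restricts to the appropriate band and eigenprojection, yields \eqref{MEAkwithGap1}: on the $\pm$ band the commutator acts as $\pm g_k(H_0)$.

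With \eqref{MEAkwithGap1} in hand, the remaining statements are a study of the sign and vanishing set of the function $g_k$. The critical set $\kappa_{A_k}(H_0)$ consists of the essential-spectrum points where no strict lower bound $g_k(H_0) \geq c > 0$ (or $\leq -c$) can hold on any neighborhood; these are exactly the zeros of $g_k$ on $\con{I_\pm}$, namely where $U_{k-1}\big((t^2-\alpha^2-|a|^2-|b|^2)/(2|a||b|)\big) = 0$ together with the boundary/gap points coming from $g_0$. Using \eqref{sinUk}, the zeros of $U_{k-1}(\cos\psi)$ are at $\psi = \pi j/k$, which translates through $\cos(\theta_j+\varphi)=\cos(\pi j/k)$ into the values $\pm\lambda(\theta_j)$, giving \eqref{kthetaj}. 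The sign of $g_k$ alternates between consecutive zeros (again because $\sin(k\psi)$ alternates sign on successive intervals $(\pi j/k, \pi(j+1)/k)$), and tracking which sign occurs on which subinterval yields the explicit unions in \eqref{muthetaj} for $\mu^+_{A_k}$ and $\mu^-_{A_k}$. Finally, since \eqref{MEAkwithGap1} is an \emph{exact} identity rather than an inequality modulo compacts, on every subinterval avoiding the zeros of $g_k$ one gets a \emph{strict} Mourre estimate directly (no compact remainder is needed because the projections localize exactly onto one band), which forces $\tilde{\mu}_{A_k}(H_0) = \mu_{A_k}(H_0)$. I expect the main obstacle to be the bookkeeping in the commutator symbol computation — correctly symmetrizing $A_k$, tracking the eigenprojections of $h(\theta)$, and matching the outcome to the precise form of $g_k$ via the Chebyshev identity — rather than any conceptual difficulty; the regularity and sign-analysis steps are comparatively mechanical once that identity is set up.
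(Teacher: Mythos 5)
Your overall route --- passing to the Fourier side, computing the commutator there, and reading off \eqref{kthetaj} and \eqref{muthetaj} from the sign and zero set of $g_k$ via the Chebyshev identity \eqref{sinUk} --- is the same as the paper's. However, there is a genuine gap at the central step. You claim that, on the Fourier side, $[iA_k,H_0]$ becomes multiplication by a matrix that \emph{commutes} with $h(\theta)$ and is therefore diagonalized by the eigenprojections of $h(\theta)$. This is false. A direct computation gives
\[
\left[i\widehat{A}_k,\widehat{H}_0\right] \;=\; -|a||b|\,\sin\left(k(\theta+\varphi)\right)\, h'(\theta),
\]
and $h'(\theta)$ does \emph{not} commute with $h(\theta)$: writing $h=\lambda\Pi-\lambda\Pi^{\perp}=2\lambda\Pi-\lambda$ (away from the set where $\lambda=0$), one gets $h'=\lambda'(2\Pi-1)+2\lambda\Pi'$, and the term $2\lambda\Pi'$ is purely off-diagonal with respect to the eigenbasis of $h$ (differentiating $\Pi=\Pi^2$ gives $\Pi\Pi'\Pi=0$ and likewise $\Pi^{\perp}\Pi'\Pi^{\perp}=0$) and is nonzero whenever $\Pi'\neq 0$, which is the generic situation here; concretely, the $(1,1)$ entry of $[h,h']$ equals $2i\left(|a||b|\cos(\theta+\varphi)+|b|^2\right)\not\equiv 0$. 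Consequently the commutator is \emph{not} a function of $\widehat{H}_0$, and your assertion that ``on the $\pm$ band the commutator acts as $\pm g_k(H_0)$'' does not follow as written. The identity \eqref{MEAkwithGap1} is only true \emph{after} sandwiching by $E_{\Lambda_\pm}(H_0)$ with $\con{\Lambda_\pm}\subseteq I_\pm$: the band projections annihilate the inter-band term $2\lambda\sin(k(\theta+\varphi))\Pi'$ precisely because $\Pi\Pi'\Pi=0$. This is what the paper's proof does (it introduces a cutoff $\eta$ with $\eta\chi_\Lambda=\chi_\Lambda$, $\eta'\chi_\Lambda=0$, splits the commutator into $\lambda'$-terms and $\Pi'$-terms, and kills the latter between the projections), and it is exactly the step your plan skips.

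A secondary, smaller gap: your last inference, that exactness of \eqref{MEAkwithGap1} ``forces'' $\tilde{\mu}_{A_k}(H_0)=\mu_{A_k}(H_0)$, is a non sequitur. Exhibiting strict Mourre estimates on the bands minus the zeros of $g_k$ shows that $\mu_{A_k}(H_0)$ contains that set; it does not by itself exclude that some zero of $g_k$ lies in $\tilde{\mu}_{A_k}(H_0)$ (i.e., admits an estimate with compact error) while failing to lie in $\mu_{A_k}(H_0)$, and note that $\kappa_{A_k}(H_0)$ is defined as the complement of $\tilde{\mu}_{A_k}(H_0)$, not of $\mu_{A_k}(H_0)$. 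The paper closes this by the virial theorem (Theorem \ref{MainMourre1}): $\tilde{\mu}_{A_k}(H_0)\setminus\mu_{A_k}(H_0)$ consists of eigenvalues, and $\E_p(H_0)=\emptyset$ by Proposition \ref{Prop1}. Alternatively, one can argue directly that a Mourre estimate with compact error near a zero of $g_k$ would force a compact operator to dominate a positive multiple of an infinite-rank spectral projection, which is impossible; either way, an argument is needed here that your proposal does not supply.
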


\begin{proof} We first show that  ${H}_0 \in C^\infty({A}_k)$. Let $ \F : \H_\Z \to L^2(\T;\C^2)$ be the Fourier transform defined in \ref{FourierT},  $\widehat{H}_0:=  \F H_0 \F^{-1}$ and $ \widehat{A}_k := \F A_k \F^{-1}$. Note that  $ \widehat{A}_k$  agrees with the unique self-adjoint extension to $L^2(\T;\C^2)$ of the symmetric operator 
\begin{equation*}\label{hatAk}
  -\frac{|a| |b| }{2}\left[  \sin(k(\theta + \varphi))(-i \partial_\theta) + (-i \partial_\theta) \sin(k(\theta + \varphi)) \right],
\end{equation*}
defined on $C^\infty(\T; \C^2)$. Here $ -i\partial_\theta$ stands for the first derivative operator on $L^2(\T;\C^2)$. Since $\widehat{H}_0$ is the multiplication operator by the smooth matrix-valued symbol 
$h$ defined in \eqref{symbol}, we know that $\widehat{H}_0 \in C^\infty \left(\widehat{A}_k \right)$. From this, we conclude that ${H}_0 \in C^\infty({A}_k)$.

Regarding identity \eqref{MEAkwithGap1}, we only consider the case  $ \con{\Lambda_+} \subseteq I_+$. The identity for $ \con{\Lambda_-} \subseteq I_-$ can be handled analogously. Let $\Lambda \subseteq \R$ be a Borel set such that $\con{\Lambda} \subseteq I_+$. Consider $ {\eta} \in C_c^\infty(  {\R_{+}})$ such that $ {\eta} \chi_{\Lambda} = \chi_{\Lambda}$ and $ \eta' \chi_{\Lambda} = 0$.  So, 
\begin{align*}
  E_{\Lambda}\left({\widehat{H}_0}\right)  \left[ i\widehat{A}_k, \widehat{H}_0 \right]   E_{\Lambda}\left({\widehat{H}_0}\right)   &=  E_{\Lambda}\left({\widehat{H}_0}\right)   \left[i \widehat{A}_k,  {\eta} ( \widehat{H}_0) \widehat{H}_0 \right] E_{\Lambda}\left({\widehat{H}_0}\right) . 
\end{align*}
For each $\theta \in \T$, let  $\Pi(\theta)$ and $\Pi^\perp (\theta)$ be the orthogonal projections on the eigenspace of the symbol $h(\theta)$ associated to the eigenvalues $ \lambda (\theta) $ and $- \lambda (\theta) $, respectively. We define the set $N:= \{ \theta \in \T : \lambda(\theta) = 0 \}$, which contains at most one point. Note that  
\begin{align*} 
\Pi (\theta) &= \frac{1}{2} \left( \frac{\widehat{H}_0(\theta)}{\lambda (\theta) } + 1  \right), \quad \theta \in \T \setminus N.
\end{align*}
Hence, for all $f \in L^2(\T; \C^2)$ and $\theta \in \T$ we have 
\begin{align*}
\left(\widehat{H}_0 f \right)(\theta)  &= \chi_{\T \setminus N}(\theta)  \left( \lambda(\theta) \Pi(\theta)   -\lambda(\theta)  \Pi ^\perp(\theta)  \right)f(\theta).
\end{align*}
 
Furthermore, the following operators can also be written as multiplication operators on $L^2(\T; \C^2)$ by matrix-valued functions: $|\widehat{H}_0|= \lambda,$
$$
\begin{aligned}
     E_\Lambda \left({\widehat{H}_0}\right)  &= (\chi_\Lambda \circ \lambda) \Pi, &&
{\eta}\left({\widehat{H}_0}\right)   = ( {\eta} \circ \lambda) \Pi,  \\
  {\eta} \left({\widehat{H}_0}\right) \widehat{H}_0  &= ( {\eta} \circ \lambda) \lambda \Pi, && 
  {\eta'} \left({\widehat{H}_0}\right)   = ( {\eta'} \circ \lambda) \Pi.
\end{aligned}
$$
 Thus, 
\begin{align}
    \left[i \widehat{A}_k, {\eta} \left({\widehat{H}_0}\right) \widehat{H}_0\right] &=- |a| |b| \sin(k(\theta + \varphi))\left[ i(-i\partial_\theta), ({\eta}   \circ \lambda) \lambda \Pi \right] \notag \\
     &= - |a| |b| \sin(k(\theta + \varphi))\left[({\eta} ' \circ \lambda) \lambda'  \lambda \Pi + ({\eta}  \circ \lambda) \lambda' \Pi + ({\eta}   \circ \lambda) \lambda \Pi'\right] \notag \\
     & = - |a| |b| \sin(k(\theta + \varphi))\left[ \lambda'  \lambda {\eta} '\left({\widehat{H}_0}\right)  + \lambda' {\eta} \left({\widehat{H}_0}\right)  +  \lambda({\eta}   \circ \lambda) \Pi'\right]. \label{proofe1}
\end{align}
By the choice of $\eta$, we know that $ E_{\Lambda}\left({\widehat{H}_0}\right) \eta' \left({\widehat{H}_0}\right) \Pi'  E_{\Lambda}\left({\widehat{H}_0}\right)=0. $
Since  $\Pi(\theta) = \Pi^2(\theta)$ for $\theta \in \T \setminus N $, we have $(\Pi \Pi' \Pi f)(\theta) =0$ for $f \in L^2(\T; \C^2)$  and $\theta \in \T \setminus N $, and this implies
\begin{align*}
   E_{\Lambda}\left({\widehat{H}_0}\right)  \Pi'  E_{\Lambda}\left({\widehat{H}_0}\right)    =   E_{\Lambda}\left({\widehat{H}_0}\right)  \Pi  \Pi' \Pi  E_{\Lambda}\left({\widehat{H}_0}\right)  = 0.
\end{align*}
 
Hence from \eqref{proofe1} together with identities \eqref{EigvSymbol} and \eqref{sinUk},  we get
\begin{align*}
    E_{\Lambda}\left({\widehat{H}_0}\right) \left[i \widehat{A}_k,  \widehat{H}_0 \right]  E_{\Lambda}\left({\widehat{H}_0}\right) & =U_{k-1}(\cos(\theta + \varphi)) \frac{|a|^2|b|^2\sin^2(\theta + \varphi)}{\lambda(\theta)}  E_{\Lambda}\left({\widehat{H}_0}\right)  \\
    & = g_k\left({\widehat{H}_0}\right) E_{\Lambda}\left({\widehat{H}_0}\right) .
\end{align*}
By taking the Fourier transform, we obtain 
$
      E_{\Lambda}({H}_0)  \left[i  {A}_k,  {H}_0 \right]    E_{\Lambda}({{H}_0}) = g_k( {H}_0) E_{\Lambda}({ {H}_0}).
$

Now, since $\E_p(H_0) = \emptyset$ (see Proposition \ref{Prop1}), by Theorem \ref{MainMourre1}  we have that $ \mu_{A_k}(H_0)=  \tilde{\mu}_{A_k}(H_0)$. So, by \eqref{MEAkwithGap1} we conclude that  $\kappa_{ A_k}(H_0) = \sigma(H_0) \setminus \mu_A(H_0) =\{ t \in \sigma(H_0) : g(t)=0\}$. Therefore,  \eqref{kthetaj} and \eqref{muthetaj} follows from the fact that  the roots of the function $\R \ni x \mapsto (1-x^2)U_{k-1}(x)$ are given by 
$$x_j:=\cos\left( \frac{\pi j}{k} \right), \quad j=0,1, \ldots,k,$$
and it is positive on  
$ \bigcup_{j=0}^{\floor*{\frac{k-1}{2}}} \left( x_{2j+1}, x_{2j} \right)$ and negative on $\bigcup_{j=0}^{\floor*{\frac{k}{2}}} \left(x_{2j},  x_{2j-1} \right).$ 
\end{proof}

\begin{rem} If $H_0$ exhibits a spectral gap, then identity \eqref{MEAkwithGap1} is true for $\Lambda_\pm:=I_{\pm}$. 
 
\end{rem}

\subsection{Conjugate operators for \texorpdfstring{$H_0^+$}{ } }\label{seq2}

The aim now is to construct a family of conjugate operators for $H_0^+$. With a slight abuse of notation, we denote the canonical orthonormal bases of $\ell^2(\Z_+)$ and $\ell^2(\Z)$ by $(\delta_n)_{n \in \Z^+}$ and $(\delta_n)_{n \in \Z}$, respectively. By Remark \ref{rmkH1},  a natural orthonormal basis of  $\H_\G$ is given by $\left\{ \delta_n^-, \delta_n^+: n \in \G  \right\}$, where
\begin{align*}
\delta_{n}^- = \left( \begin{array}{c} 
\delta_{n} \\
0 \end{array} \right)
 \quad \text{and} \quad 
\delta_{n}^+ = \left( \begin{array}{c} 
0 \\
\delta_{n} \end{array} \right), \quad n\in \G.
\end{align*}
Moreover, if $P \in \B\left(\H_{\Z} \right)$ is the orthogonal projection of $\H_{\Z}$ onto $\H_{\Z_+}$, then
$$P= \sum_{j=0}^\infty \left( |  \delta_j^{-}  \ket   \bra \delta_j^{-}  | +  | \delta_j^{+}   \ket  \bra \delta_j^{+}  |\right).$$
Mind that a linear operator $T$ acting on a subspace of $\H_{\Z_+}$ can be canonically identified with  $ {P TP}$. Conversely, if $T$ is a linear operator acting on a subspace of $ \H_\Z$, then ${P TP}$ can be considered as a linear operator on  $\H_{\Z_+}$.

Now, for each $k \in \Z_+ \setminus \{0\} $, let $A_k^+$ be the self-adjoint operator on $\H_{\Z_+}$ defined by 
\begin{equation*}
    A_k^+:= P A_k P, \quad k>0,
\end{equation*}
where $(A_k)_{k >0}$ is the sequence of operators on $\H_{\Z}$ defined in Section \ref{seq1}. The operator $ A_k^+$ agrees with the closure of the essentially  self-adjoint operator defined on $\ell^2_{c}(\Z_+; \C^2)$ by
\begin{equation}\label{SequenceAkinZplus}
     -\frac{|a||b|}{4i}\left[ \left(e^{ik \varphi}S^k - e^{-ik \varphi}S^{*k} \right)X^+ + X^+\left(e^{ik \varphi}S^k - e^{-ik \varphi}S^{*k} \right) \right].
\end{equation}

We require two auxiliary lemmas to prove that $\left(A_k^+\right)_{k>0}$ is indeed a sequence of conjugate operators for $H_0^+$. The scalar version of the next result was proved in \cite[Lemma 3.7]{o-toe}.

\begin{lem}\label{FuncH0} The operators  $P H_0 P^\perp$ and $P^\perp H_0 P$ are of finite rank. Moreover, if $\Phi: \R \to \C$ is a continuous function of compact support, then
    \begin{itemize}
        \item[(i)] $ \Phi(H_0^{+}) \simeq P \Phi(H_0)P$.
        \item[(ii)] $P \Phi(H_0)P^\perp$ and $ P^\perp\Phi(H_0)P$ are compact. 
    \end{itemize}
\end{lem}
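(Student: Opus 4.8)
The plan is to compare $H_0$ with its ``decoupled'' block-diagonal version and to transfer compactness through the continuous functional calculus.

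First I would settle the finite-rank assertion by a direct computation. Because $\alpha,a,\bar a$ act diagonally in the $\Z$-index while $S,S^*$ shift it by $\pm1$, the operator $H_0$ only couples neighbouring sites, so the sole matrix elements linking $\H_{\Z_+}$ (indices $n\ge0$) to $\H_{\Z_-}$ (indices $n<0$) sit on the bond $\{-1,0\}$ and come from the shift terms. Evaluating on the basis $\{\delta_n^\pm\}$ gives $P^\perp H_0 P=\bar b\,|\delta_{-1}^-\ket\bra\delta_0^+|$, of rank one, and, by self-adjointness of $H_0$, $PH_0P^\perp=(P^\perp H_0 P)^\ast=b\,|\delta_0^+\ket\bra\delta_{-1}^-|$, also of rank one; both are finite rank.

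Next I would introduce $\tilde H_0:=PH_0P+P^\perp H_0 P^\perp$, which by Remark~\ref{rmkH1} is the block-diagonal operator $H_0^+\oplus H_0^{(\Z_-)}$ and which, by the previous step, satisfies $H_0-\tilde H_0=PH_0P^\perp+P^\perp H_0P\in\K(\H_{\Z})$ (indeed finite rank). The engine of the proof is then the standard stability fact: if $A,B\in\B(\Hh)$ are self-adjoint with $A-B\in\K(\Hh)$, then $\Phi(A)-\Phi(B)\in\K(\Hh)$ for every continuous $\Phi$. For a monomial this follows from the telescoping identity $A^n-B^n=\sum_{j=0}^{n-1}A^j(A-B)B^{n-1-j}$ together with the ideal property of $\K(\Hh)$, hence it holds for all polynomials; approximating $\Phi$ uniformly on a compact interval containing $\sigma(A)\cup\sigma(B)$ by the Weierstrass theorem, and using norm-continuity of the functional calculus together with the norm-closedness of $\K(\Hh)$, extends it to every continuous $\Phi$. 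Applied with $A=H_0$ and $B=\tilde H_0$, this yields $\Phi(H_0)\simeq\Phi(\tilde H_0)$.

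Finally I would read off (i) and (ii) from the block structure. Since $\tilde H_0$ commutes with $P$, so does $\Phi(\tilde H_0)$, and the functional calculus respects the direct sum, giving $\Phi(\tilde H_0)=\Phi(H_0^+)\oplus\Phi(H_0^{(\Z_-)})$; thus $P\Phi(\tilde H_0)P=\Phi(H_0^+)$ under the identification of Remark~\ref{rmkH1}, while $P\Phi(\tilde H_0)P^\perp=P^\perp\Phi(\tilde H_0)P=0$. Combining this with the previous paragraph,
\[
\Phi(H_0^+)-P\Phi(H_0)P=P\bigl(\Phi(\tilde H_0)-\Phi(H_0)\bigr)P\in\K(\H_{\Z_+}),
\]
which is (i), and $P\Phi(H_0)P^\perp=P\bigl(\Phi(H_0)-\Phi(\tilde H_0)\bigr)P^\perp$ together with its adjoint $P^\perp\Phi(H_0)P$ are compact, which is (ii). The only genuinely substantive step is the compactness of $\Phi(H_0)-\Phi(\tilde H_0)$; everything else is bookkeeping with $P$. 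I also note that compact support of $\Phi$ plays no role — only its continuity on the compact set $\sigma(H_0)$ matters — so the conclusions in fact hold for every $\Phi\in C(\R)$.
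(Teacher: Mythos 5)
Your proof is correct, and it reaches the conclusions by a genuinely different organization than the paper. The paper first invokes Stone--Weierstrass to reduce to polynomial $\Phi$, and then runs two inductions directly on powers of $H_0$: it shows $\left(H_0^+\right)^j \simeq P (H_0)^j P$ and $P (H_0)^j P^\perp \simeq 0$ by inserting $1 = P + P^\perp$ at each step and absorbing the finite-rank errors $P H_0 P^\perp$, $P^\perp H_0 P$. You instead introduce the decoupled operator $\tilde H_0 := P H_0 P + P^\perp H_0 P^\perp$, observe that $H_0 - \tilde H_0$ is finite rank, and appeal to the standard stability fact that $\Phi(A)-\Phi(B) \in \K(\H_\Z)$ whenever $A,B$ are bounded self-adjoint with $A-B$ compact and $\Phi$ continuous; items (i) and (ii) then reduce to the observation that $\Phi(\tilde H_0)$ commutes with $P$ and respects the direct sum. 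Both routes ultimately rest on polynomial approximation (your telescoping identity is essentially the same algebra as the paper's induction), but your version cleanly separates the analytic input (stability of the functional calculus under compact perturbations) from the projection bookkeeping. This buys modularity --- the same abstract lemma also underlies the essential-spectrum claim in the proof of Proposition \ref{Prop1} --- and makes explicit, as you correctly note, that compact support of $\Phi$ is irrelevant here. The paper's induction, on the other hand, is self-contained and avoids introducing an auxiliary operator.

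Two minor remarks. First, your rank-one formulas $P^\perp H_0 P = \bar b\,|\delta_{-1}^-\rangle\langle\delta_0^+|$ and $P H_0 P^\perp = b\,|\delta_0^+\rangle\langle\delta_{-1}^-|$ are the correct ones; the corresponding display in the paper's proof carries superscript and conjugation typos (its two formulas are not even mutually adjoint), though of course only finite-rankness is used. Second, the adjoint of $P\Phi(H_0)P^\perp$ is $P^\perp\overline{\Phi}(H_0)P$, not $P^\perp\Phi(H_0)P$; this slip is harmless, since your own identity $P^\perp \Phi(H_0) P = P^\perp\left(\Phi(H_0)-\Phi(\tilde H_0)\right)P$ gives the compactness directly, with no adjoint needed.
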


\begin{proof}
    Simple calculations show that 
    $$P H_0 P^\perp =  \con{b} \left| \delta_0^{-} \right\ket \left\bra \delta_{-1}^{-} \right| \quad \mbox{and} \quad    P^\perp H_0 P = b \left| \delta_{-1}^+ \right\ket \left\bra \delta_0^+ \right|    .$$
According to the Stone-Weierstrass Theorem, to show statements (i) and (ii), it is enough to consider the case when $\Phi$ is a polynomial. We prove by induction that $\left(H_0^+ \right)^j \simeq P (H_0)^j P$ for $j \in \Z_{+}$. This is true for $j=0$. Assume the induction hypothesis for some $j \geq 0$. Then 
    $$\left(H_0^+ \right)^{j+1} = \left(H_0^+ \right) ^j(PH_0P) \simeq  P (H_0)^j P  H_0 P =  P (H_0)^{j} (1 - P^\perp )(H_0)P \simeq P (H_0)^{j+1} P.$$
    Thus statement (i) is true if $\Phi$ is a polynomial. 
    
We also proceed by induction to verify that $P(H_0)^jP^\perp \simeq 0$ for $j \in \Z_{+}$. This is clear for $j=0$. Suppose the induction hypothesis is true for some $j \geq 0$. Then
$$ P(H_0)^{j+1}P^\perp = P(H_0)^jP P H_0P^\perp  + P(H_0)^jP^\perp P^\perp H_0P^\perp \simeq 0. $$
Hence $P \Phi(H_0) P^\perp \simeq 0$ if $\Phi$ is a polynomial. Finally, note that $P^\perp \Phi(H_0) P = (P \con{\Phi}(H_0) P^\perp)^*  $ is also compact.
\end{proof}

\begin{lem}\label{PAkPperp}
Let $k \in \Z_{+}$, with $k>0$ . The operators $PA_kP^\perp$ and $ P^\perp A_kP $ are of finite-rank, and their ranges are included in $\D\left(A_k \right)$.
\end{lem}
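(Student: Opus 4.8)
The plan is to reduce everything to the explicit action of $A_k$ on the canonical basis $\{\delta_m^-, \delta_m^+ : m \in \Z\}$ of $\H_\Z$. Writing $c := -|a||b|/(4i)$ and using $X\delta_m = m\delta_m$ together with $S^{\pm k}\delta_m = \delta_{m \pm k}$ (so the shifts displace the index by exactly $\pm k$), a direct computation from \eqref{SequenceAkinZ} yields
\begin{equation*}
A_k \delta_m^{\pm} = c\left[ e^{ik\varphi}(2m+k)\, \delta_{m+k}^{\pm} - e^{-ik\varphi}(2m-k)\, \delta_{m-k}^{\pm}\right], \quad m \in \Z.
\end{equation*}
The essential feature is that $A_k$ couples $\delta_m^{\pm}$ only to $\delta_{m\pm k}^{\pm}$: there is no mixing between the two $\C^2$-components, and no coupling beyond distance $k$ in the index. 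This ``finite speed of propagation'' $\pm k$ is exactly what will force the off-diagonal blocks to act on, and into, only finitely many basis vectors.

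With this formula in hand, the two off-diagonal blocks are controlled by elementary index bookkeeping. Since $P^\perp$ annihilates $\delta_m^{\pm}$ for $m \geq 0$, it suffices to evaluate $PA_kP^\perp$ on $\delta_m^\pm$ with $m \leq -1$; there the term $c\, e^{ik\varphi}(2m+k)\delta_{m+k}^\pm$ survives $P$ precisely when $m+k \geq 0$, whereas the $\delta_{m-k}^\pm$ term (index $m-k < 0$) is killed by $P$. Hence $PA_kP^\perp$ acts nontrivially only on the $2k$ basis vectors $\{\delta_m^\pm : -k \leq m \leq -1\}$, with range contained in $\spa\{\delta_j^\pm : 0 \leq j \leq k-1\}$. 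Symmetrically, $P^\perp A_kP$ acts nontrivially only on $\{\delta_m^\pm : 0 \leq m \leq k-1\}$ through the $\delta_{m-k}^\pm$ term, with range in $\spa\{\delta_j^\pm : -k \leq j \leq -1\}$. In both cases one reads off an explicit operator of rank at most $2k$.

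The one point requiring care is that $A_k$ is unbounded and $P, P^\perp$ need not manifestly preserve $\D(A_k)$, so a priori the products $PA_kP^\perp$ and $P^\perp A_kP$ are only defined on a suitable subspace. I would resolve this by carrying out the computation on the core $\ell^2_{c}(\Z; \C^2)$, which is invariant under $P$ and $P^\perp$ and contained in $\D(A_k)$ (recall $A_k$ is essentially self-adjoint on $\ell^2_{c}(\Z; \C^2)$). The explicit finite-rank operators exhibited above then provide bounded extensions that, by density of $\ell^2_{c}(\Z; \C^2)$, coincide with $PA_kP^\perp$ and $P^\perp A_kP$. Finally, each range is spanned by finitely many $\delta_j^{\pm}$ of finite support, hence lies in $\ell^2_{c}(\Z; \C^2) \subseteq \D(A_k)$, which gives the last assertion. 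I do not anticipate a genuine obstacle: the combinatorics is elementary, and the main thing to get right is the accounting of which shifted index survives each projection.
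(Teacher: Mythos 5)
Your proposal is correct and follows essentially the same route as the paper: the paper's proof consists of exactly this direct computation, writing $PA_kP^\perp$ and $P^\perp A_kP$ as explicit sums of $2k$ rank-one operators $\left|\delta_j^\pm\right\ket\left\bra\delta_{j-k}^\pm\right|$ (with coefficients $\mp\frac{|a||b|}{4i}(2j-k)e^{\pm ik\varphi}$, matching your formula under $j=m+k$), from which both the finite rank and the inclusion of the ranges in $\ell^2_c(\Z;\C^2)\subseteq\D(A_k)$ are immediate. Your extra care about defining the products on the core $\ell^2_c(\Z;\C^2)$ is a reasonable refinement of what the paper leaves implicit.
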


\begin{proof}
Direct calculations show that
$$ P A_k P^\perp = - \frac{|a||b|}{4i} \sum_{j=0}^{k-1} (2j-k) e^{ik \varphi}\left( \left|\delta_{j}^- \right\ket \left\bra \delta_{j-k}^-\right| + \left|\delta_{j}^+ \right\ket \left\bra \delta_{j-k}^+ \right|   \right)   $$
and
$$ P^\perp A_k P=  \frac{|a||b|}{4i} \sum_{j=0} ^{k-1} (2j-k) e^{-ik \varphi}\left( \left|\delta_{j-k}^- \right\ket \left\bra \delta_j^- \right| + \left|\delta_{j-k}^+ \right\ket \left\bra \delta_j^+ \right|  \right). $$
The conclusion follows.
\end{proof}

\begin{theo}\label{MourreH0+}
    Let $k \in \Z_{+}\setminus \{0\}$. Then $H_0^+$ is of class $C^2\left(A_k^+\right)$ and  $ \left[iA_k^+, H_0^+ \right] \simeq  P \left[iA_k, H_0 \right] P $. 
\end{theo}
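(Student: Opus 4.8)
The plan is to transfer the regularity of $H_0$ with respect to $A_k$ (Theorem~\ref{MainAk}) to the half-line operators by introducing a block-diagonal conjugate operator. Write $P^\perp := 1-P$ and set $F := PA_kP^\perp + P^\perp A_k P$ and $\tilde A_k := A_k - F = PA_kP + P^\perp A_k P^\perp$. By Lemma~\ref{PAkPperp}, $F$ is a finite-rank self-adjoint operator whose range consists of finitely supported sequences; hence $\tilde A_k$ is self-adjoint on $\D(A_k)$, and since it agrees with $A_k^+\oplus A_k^-$ on the common core $\ell^2_c(\Z;\C^2)$ (where $A_k^- := P^\perp A_k P^\perp$ is the analogue of $A_k^+$ on $\H_{\Z_-}$), the two operators coincide. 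I would first record that $P\in C^\infty(A_k)$: the operator $P$ preserves $\D(A_k)$, and $[iA_k,P] = i(P^\perp A_k P - PA_kP^\perp)$ is finite rank by Lemma~\ref{PAkPperp}; because $A_k$ maps finitely supported sequences to finitely supported sequences, the iterated commutators $[iA_k,P]$, $[iA_k,[iA_k,P]],\dots$ all remain finite rank, so $P\in C^\infty(A_k)$, and likewise $F\in C^\infty(A_k)$. As $C^\infty(A_k)$ is a $*$-algebra (Remark~\ref{starAlgebras}) and $H_0\in C^\infty(A_k)$, it follows that $PH_0P\in C^\infty(A_k)\subseteq C^2(A_k)$.

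Next I would pass from $A_k$ to $\tilde A_k$. Since $F$ is bounded we have $\D(\tilde A_k)=\D(A_k)$, and for any bounded $T\in C^1(A_k)$ the identity $[\tilde A_k,T]=[A_k,T]-[F,T]$ (with $[F,T]$ automatically bounded) shows $C^1(A_k)=C^1(\tilde A_k)$. Using in addition that $F\in C^1(A_k)=C^1(\tilde A_k)$ and that $C^1(\tilde A_k)$ is an algebra, the same identity upgrades this to $C^2(A_k)=C^2(\tilde A_k)$ on bounded operators; hence $PH_0P\in C^2(\tilde A_k)$. Now $\tilde A_k$ and $PH_0P$ are block diagonal for the decomposition $\H_\Z=\H_{\Z_-}\oplus\H_{\Z_+}$, acting on the $\H_{\Z_+}$-summand as $A_k^+$ and $H_0^+$ respectively. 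Since $e^{i\tilde A_k t}(PH_0P)e^{-i\tilde A_k t}$ splits as the direct sum of the corresponding conjugations on each summand, membership in $C^2(\tilde A_k)$ is equivalent to membership of each block in the respective class; reading off the $\H_{\Z_+}$-block gives $H_0^+\in C^2(A_k^+)$.

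For the commutator identity I would expand, using $P,H_0\in C^1(A_k)$ and the Leibniz rule for the derivation $[iA_k,\cdot\,]$,
\begin{equation*}
[i\tilde A_k, PH_0P] = P[iA_k,H_0]P + [iA_k,P]\,H_0P + PH_0\,[iA_k,P] - [iF, PH_0P].
\end{equation*}
The last three terms are finite rank, because $[iA_k,P]$ and $F$ are finite rank while $H_0$, $P$ and $PH_0P$ are bounded; therefore $[i\tilde A_k, PH_0P]\simeq P[iA_k,H_0]P$. Both sides are block diagonal with vanishing $\H_{\Z_-}$-block, so passing to the $\H_{\Z_+}$-block — where a finite-rank operator on $\H_\Z$ restricts to a compact operator on $\H_{\Z_+}$ — yields $[iA_k^+, H_0^+]\simeq P[iA_k,H_0]P$, as claimed.

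The main obstacle is the change of conjugate operator from $A_k$ on $\H_\Z$ to $A_k^+$ on $\H_{\Z_+}$, carried out in the second paragraph. The two facts that make it routine are, first, that $A_k$ and $\tilde A_k$ differ by the bounded operator $F\in C^1(A_k)$, so the classes $C^1$ and $C^2$ are unchanged, and second, that $\tilde A_k$ is block diagonal, so these classes split over the orthogonal decomposition $\H_\Z=\H_{\Z_-}\oplus\H_{\Z_+}$. Both reductions rest on the finite-rank statement of Lemma~\ref{PAkPperp}; the remaining bookkeeping of the correction terms is then straightforward.
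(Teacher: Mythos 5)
Your proof is correct, and it reaches the conclusion by a genuinely different route than the paper. The paper's proof is computational: writing $A_k^+=PA_kP$ and $H_0^+=PH_0P$, it expands the first commutator as $[iA_k^+,H_0^+]=P[iA_k,H_0]P-iPA_kP^\perp H_0P+iPH_0P^\perp A_kP$, then expands the difference $[iA_k^+,[iA_k^+,H_0^+]]-P[iA_k,[iA_k,H_0]]P$ into six cross terms, each shown to be finite rank via Lemmas~\ref{FuncH0} and~\ref{PAkPperp}; boundedness of the second commutator then yields $H_0^+\in C^2(A_k^+)$. You instead modify the conjugate operator once and for all: with $F:=PA_kP^\perp+P^\perp A_kP$ finite rank, the operator $\tilde A_k=A_k-F$ is block diagonal, you check $P,F\in C^\infty(A_k)$, transfer regularity through the bounded $C^1$ perturbation (so $C^m(A_k)=C^m(\tilde A_k)$ for $m=1,2$), and read the result off the $\H_{\Z_+}$ block. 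Both arguments hinge on Lemma~\ref{PAkPperp}, but yours dispenses with Lemma~\ref{FuncH0} and with the second-order bookkeeping entirely, and it gives more: iterating your upgrade argument (using $F\in C^\infty(A_k)$) yields $C^m(A_k)=C^m(\tilde A_k)$ for every $m$, hence $H_0^+\in C^\infty(A_k^+)$ rather than merely $C^2$, which is what one expects by analogy with Theorem~\ref{MainAk}. What the paper's direct computation buys in exchange is self-containedness: it needs no lemma on changing the conjugate operator and no identification of $\tilde A_k$ with a direct sum. Two assertions in your write-up deserve a one-line justification, though neither is a gap: that $P$ preserves $\D(A_k)$ (this follows from the closedness of $A_k$ together with the boundedness of the off-diagonal compressions supplied by Lemma~\ref{PAkPperp}), and that $\ell^2_c(\Z;\C^2)$ is a core simultaneously for $\tilde A_k$ (because $F$ is bounded and $\ell^2_c$ is a core for $A_k$) and for $A_k^-\oplus A_k^+$ (because $A_k^\pm$ are by construction the closures of their restrictions to compactly supported sequences), so that these two self-adjoint operators indeed coincide.
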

    
\begin{proof}
    By Theorem \ref{MainAk}, $H_0 \in C^\infty(A_k)$. So
    $$\left[iA_k^+, H_0^+ \right]  = [i P A_k P, PH_0P] = P [iA_k, H_0] P - i P A_k P^\perp H_0 P + iP H_0 P^\perp  A_kP.$$
By Lemma \ref{PAkPperp}, this means that $[iA_k^+, H_0^+] \simeq P [iA_k, H_0] P $. For the second order commutator, we have that 
   \begin{align*}
       \left[iA_k^+, \left[iA_k^+, H_0^+\right]\right] -  P\left[iA_k, [iA_k, H_0]\right] P & = -i PA_k P^\perp [iA_k, H_0] P +i P [iA_k, H_0] P^\perp A_k P   \\
       & \quad + PA_kPA_kP^\perp H_0P  + PH_0P^\perp A_k P A_k P\\
       & \quad - PA_kP^\perp H_0PA_kP   - P A_k P H_0 P^\perp A_k P.
   \end{align*}
  By virtue of Lemmas \ref{FuncH0} and \ref{PAkPperp}, the operators on the right side are of finite rank. In particular, $ \left[iA_k^+, \left[iA_k^+, H_0^+\right] \right] \simeq  P[iA_k, [iA_k, H_0]] P $. This means that $H_0^+$ is of class $C^2\left(A_k^+ \right)$. This concludes the proof. 
\end{proof}

The next theorem is the analog of Theorem \ref{MainAk} for $H_0^+$ and $A_k^+$ instead of $H_0$ and $A_k$, respectively.

\begin{theo}\label{MainAkUnilateral}
Let $I_{\pm}$ be the spectral bands of the operator $H_0^+$ defined in \eqref{leftband},  $k \in \Z_{+} \setminus\{0\}$ and $g_k$ be defined as in \eqref{gk}. For any Borel set $\Lambda_\pm \subseteq \R$ such that   $\con{\Lambda_\pm} \subseteq I_\pm$ it holds
\begin{equation}\label{MEAkUnilateral}
E_{\Lambda_{\pm}} \left(H_0^+ \right) \left[ i{A}^+_k, H_0^+ \right]  E_{\Lambda_{\pm}}\left(H_0^+ \right) \simeq \pm g_k(H_0^+)  E_{\Lambda_{\pm}}\left(H_0^+ \right).
\end{equation}
In particular,  $\kappa_{ A_k^+}\left(H_0^+ \right) =  \kappa_{ A_k}(H_0) $ and $  \tilde{\mu}_{A_k^+}\left(H_0^+\right) =\mu_{A_k^+}\left(H_0^+\right) =  \mu_{A_k}(H_0)$.  
\end{theo}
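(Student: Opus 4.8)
The plan is to transfer the bilateral Mourre identity \eqref{MEAkwithGap1} of Theorem \ref{MainAk} to the half-line compression, carefully tracking compact remainders; recall that $H_0^+ = PH_0P$ and $A_k^+ = PA_kP$ by construction. Fix $\Lambda = \Lambda_\pm$ with $\con{\Lambda} \subseteq I_\pm$ and write $B := [iA_k,H_0]$. First I would invoke Theorem \ref{MourreH0+}, which gives $[iA_k^+,H_0^+] \simeq PBP$, so that conjugating by the bounded operator $E_\Lambda(H_0^+)$ yields
\[
E_\Lambda(H_0^+)[iA_k^+,H_0^+]E_\Lambda(H_0^+) \simeq E_\Lambda(H_0^+)\,PBP\,E_\Lambda(H_0^+).
\]
Since $\con{\Lambda}$ is a compact subset of the open band $I_\pm$ (which avoids $0$), I choose $\eta \in C_c(I_\pm)$ with $\eta \equiv 1$ on $\con{\Lambda}$; then $E_\Lambda(H_0^+)\eta(H_0^+) = E_\Lambda(H_0^+)$, and Lemma \ref{FuncH0}(i) lets me replace each factor $\eta(H_0^+)$ by $P\eta(H_0)P$ modulo compacts. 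Using in addition Lemma \ref{FuncH0}(ii) to absorb the interior projections $P$ across $B$ (the cross terms contain $P\eta(H_0)P^\perp$ or $P^\perp\eta(H_0)P$, hence are compact), I reach
\[
E_\Lambda(H_0^+)[iA_k^+,H_0^+]E_\Lambda(H_0^+) \simeq E_\Lambda(H_0^+)\,P\,\eta(H_0)B\eta(H_0)\,P\,E_\Lambda(H_0^+).
\]

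The heart of the argument is then the continuous-cutoff version of the identity behind Theorem \ref{MainAk}, namely $\eta(H_0)[iA_k,H_0]\eta(H_0) = \pm g_k(H_0)\eta(H_0)^2$ whenever $\supp\eta \subseteq I_\pm$. I would prove this exactly as in the proof of Theorem \ref{MainAk}: on $L^2(\T;\C^2)$ the commutator $\widehat B$ is multiplication by $-|a||b|\sin(k(\theta+\varphi))\,h'(\theta)$, while $\eta(\widehat H_0) = (\eta\circ\lambda)\Pi$ on the $I_+$ band (the $\Pi^\perp$-part drops out because $\supp\eta$ is positive and $-\lambda \le 0$, and symmetrically on $I_-$). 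The Feynman--Hellmann relation $\Pi h'\Pi = \lambda'\Pi$, equivalently $\Pi\Pi'\Pi = 0$ which is already exploited in Theorem \ref{MainAk}, collapses the sandwich to $(\eta\circ\lambda)^2 g_k(\lambda)\Pi = g_k(\widehat H_0)\eta(\widehat H_0)^2$. Crucially $g_k\eta^2 \in C_c(\R)$ is continuous and compactly supported, so Lemma \ref{FuncH0}(i) applies to give $P\,g_k(H_0)\eta(H_0)^2\,P \simeq g_k(H_0^+)\eta(H_0^+)^2$. Substituting, and then using $\eta(H_0^+)^2 E_\Lambda(H_0^+) = E_\Lambda(H_0^+)$ together with the fact that $g_k(H_0^+)$ commutes with $E_\Lambda(H_0^+)$, collapses the right-hand side to $\pm g_k(H_0^+)E_\Lambda(H_0^+)$, which is \eqref{MEAkUnilateral}.

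For the remaining identifications I would read off the Mourre sets from \eqref{MEAkUnilateral} just as for Theorem \ref{MainAk}: on any open subinterval of $I_\pm$ where $\pm g_k > 0$ the estimate furnishes a (strict, modulo compact) Mourre estimate for $\pm A_k^+$, whereas the zeros of $g_k$ on $\sigma_{\ess}(H_0^+) = \sigma_{\ess}(H_0)$ are exactly the points of $\kappa_{A_k}(H_0)$; hence $\kappa_{A_k^+}(H_0^+) = \kappa_{A_k}(H_0)$ and $\tilde{\mu}_{A_k^+}(H_0^+) = \mu_{A_k}(H_0)$. To upgrade $\tilde{\mu}$ to $\mu$, I invoke Theorem \ref{MainMourre1}(i): the set $\tilde{\mu}_{A_k^+}(H_0^+)\setminus\mu_{A_k^+}(H_0^+)$ consists only of eigenvalues of $H_0^+$, and by Proposition \ref{Prop1}(iii) the single possible eigenvalue $-\alpha$ lies in the spectral gap, hence outside $I_\pm \supseteq \tilde{\mu}_{A_k^+}(H_0^+)$; so there are no eigenvalues to remove and $\tilde{\mu}_{A_k^+}(H_0^+) = \mu_{A_k^+}(H_0^+)$.

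I expect the main obstacle to be organizational rather than conceptual: the spectral projection $E_\Lambda(H_0^+)$ cannot be transferred to $PE_\Lambda(H_0)P$ directly, because $\chi_\Lambda$ is not continuous and Lemma \ref{FuncH0}(i) applies only to continuous functions; a naive strong-limit approximation fails, since strong limits of compact operators need not be compact. The device that sidesteps this is to never transfer $\chi_\Lambda$ at all: the cutoff $\eta$ is continuous, and after forming the sandwich the only function that actually has to cross the compression, namely $g_k\eta^2$, is again continuous and compactly supported. Keeping the compact remainders under control through the several $P$ and $P^\perp$ insertions (via Lemma \ref{FuncH0}) is the only delicate bookkeeping.
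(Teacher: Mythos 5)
Your proposal is correct and follows essentially the same route as the paper: a continuous Urysohn-type cutoff (your $\eta$, the paper's $\Phi$) equal to $1$ on $\con{\Lambda_\pm}$ and supported inside $I_\pm$, the transfer of the commutator via Theorem \ref{MourreH0+}, Lemma \ref{FuncH0} to move continuous compactly supported functions of $H_0$ across $P$ and to dispose of the $P^\perp$ cross terms, collapsing at the end with $E_{\Lambda_\pm}\left(H_0^+\right)$, and the same concluding identification of $\kappa$ and $\tilde\mu=\mu$ via Theorem \ref{MainMourre1} and Proposition \ref{Prop1}(iii). The only cosmetic difference is that the paper obtains the cutoff identity $\Phi(H_0)[iA_k,H_0]\Phi(H_0)=\pm g_k(H_0)\Phi^2(H_0)$ directly from \eqref{MEAkwithGap1} applied to an intermediate open interval $\Delta$ satisfying $\Phi(H_0)E_\Delta(H_0)=\Phi(H_0)$, rather than re-running the Fourier-space projection computation as you suggest.
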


\begin{proof} We show \eqref{MEAkUnilateral} for $\Lambda_+$. The proof for $\Lambda_{-}$ follows the same argument. Let $\Delta$ be an open interval such that $\con{\Delta} \subseteq  I_+$ and  $\con{\Lambda_{+}} \subseteq \Delta \subseteq I_+$. By Urysohn Lemma, there exists a continuous function $\Phi: \R \to \C$ that takes the value $1$ on $\con{\Lambda_+}$ and vanishes on $\R \setminus \Delta$. Thus $\Phi(H_0)E_{\Delta}(H_0) = \Phi(H_0) $ and from  \eqref{MEAkwithGap1}, applied to $\Delta$, we  get
 \begin{equation*}
       \Phi(H_0) [iA_k, H_0] \Phi(H_0) =  g_k(H_0) \Phi^2(H_0).
    \end{equation*}
By Theorem \ref{MourreH0+} and Lemma \ref{FuncH0}, we have 
    \begin{align*}
       \Phi\left(H_0^+ \right) \left[iA_k^+, H_0^+\right] \Phi(H_0^+) &\simeq \Phi\left(H_0^+ \right) P [iA_k, H_0] P  \Phi \left(H_0^+ \right) \\
        &\simeq  P \Phi(H_0)  P [iA_k, H_0] P  \Phi(H_0) P  \\
        & =   P \Phi(H_0)  [iA_k, H_0]   \Phi(H_0) P - P \Phi(H_0)  P^\perp [iA_k, H_0]   \Phi(H_0) P \\
        & \quad -  P \Phi(H_0)   [iA_k, H_0] P^\perp  \Phi(H_0) P \\ & \quad  +  P \Phi(H_0)  P^\perp [iA_k, H_0] P^\perp  \Phi(H_0) P \\
        & \simeq P \Phi(H_0)  [iA_k, H_0]   \Phi(H_0) P \\
        & =  P g_k(H_0) \Phi^2(H_0)  P \\
        & \simeq  g_k\left(H_0^+\right) \Phi^2\left(H_0^+\right). 
    \end{align*}
Together  with $E_{\Lambda_+}\left(H_0^+ \right)\Phi\left(H_0^+ \right) = E_{\Lambda_+}\left(H_0^+ \right) $, this yields the claimed relation. Now, from   \eqref{MEAkwithGap1} and \eqref{MEAkUnilateral} we deduce that $\tilde{\mu}_{A_k^+}\left( H_0^+ \right)  = \mu_{A_k}(H_0)$ and $\kappa_{A_k^+}\left( H_0^+ \right) = \kappa_{A_k }\left( H_0  \right)$. Finally,  $ {\mu}_{A_k^+}\left(H_0^+\right) =\mu_{A_k}(H_0)$ follows from  Proposition \ref{Prop1} and Theorem \ref{MainMourre1}. 
    \end{proof}

\subsection{An alternative conjugate operator in the gapless case}\label{A0Gapless}

In this section, we assume that $H_0^{(\G)}$ does not exhibit a spectral gap (see Proposition \ref{Prop1}). Following Remark \ref{remDiscreteLaplacian}, a conjugate operator for $H_0^{(\Z)}$ can be derived from a conjugate operator for the discrete Laplacian via a suitable unitary equivalence. We instead adopt an intrinsic construction that is formulated directly in terms of the structure of $H_0^{(\Z)}$. 

As before, let $a = |a|e^{i \varphi_1}$, $b=|b|e^{i \varphi_2}$ and $ \varphi:= \varphi_2-\varphi_1$. We define $A_0$ as the unique self-adjoint extension on $\H_{\Z}$ of the operator
\begin{equation}\label{A0inZ}
   -|a|\begin{pmatrix}
        0 & i e^{i\varphi_1} (I - e^{i \varphi} S) \\
        -i e^{-i\varphi_1} (I - e^{-i\varphi}S^{-1})  & 0
    \end{pmatrix}X + h.c.
\end{equation}
defined initially on $\ell^2_c(\Z; \C^2)$. Here, $h.c.$ denotes the adjoint of the preceding operator.

\begin{theo}\label{MainA0} The operator $H_0$ is of class $ C^\infty(A_0)$ and $$[A_0, H_0] = 4|a|^2-H_0^2.$$
Furthermore, $\kappa_{A_0} (H_0)= \{\pm 2|a|\}$ and  $\tilde{\mu}_{A_0}(H_0) =\mu_{A_0}(H_0) = \mu^+_{A_0}(H_0) = (-2|a|,2|a|)$.
\end{theo}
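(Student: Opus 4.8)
The plan is to mirror the strategy of the proof of Theorem \ref{MainAk}: conjugate by the Fourier transform $\F$ of \eqref{FourierT} and reduce the statement to an explicit symbol computation on $L^2(\T;\C^2)$. Writing $\widehat{H}_0=\F H_0\F^{-1}$ (multiplication by $h(\theta)$, now with $\alpha=0$, $|a|=|b|$) and $\widehat{A}_0=\F A_0\F^{-1}$, the operator $S$ becomes multiplication by $e^{i\theta}$ and $X$ becomes $-i\partial_\theta$, so $\widehat{A}_0$ is the unique self-adjoint extension of a first-order differential operator of the form $M(\theta)(-i\partial_\theta)+(-i\partial_\theta)M(\theta)$, where $M$ is the Hermitian matrix-valued multiplier read off from the coefficient of $X$ in \eqref{A0inZ}. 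First I would record that $M$ is a trigonometric polynomial, hence manifestly $C^\infty$ on all of $\T$ (including at the conical point $\theta_0$ where $\lambda(\theta_0)=0$), and that $\widehat{A}_0$ is essentially self-adjoint on $C^\infty(\T;\C^2)$ by the adaptation of \cite{g-g} already used in Section \ref{seq1}. Since $h$ is smooth, the iterated commutators of $\widehat{H}_0$ with $\widehat{A}_0$ are smooth multiplication operators, hence bounded; this gives $\widehat{H}_0\in C^\infty(\widehat{A}_0)$ and therefore $H_0\in C^\infty(A_0)$, exactly as in Theorem \ref{MainAk}.

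The heart of the argument is the commutator identity. Expanding $[i\widehat{A}_0,\widehat{H}_0]$ via the product rule $(-i\partial_\theta)h=-ih'+h(-i\partial_\theta)$ produces \emph{a priori} a genuine first-order (in $-i\partial_\theta$) term, whose coefficient is proportional to the matrix commutator $[M,h]$, together with a zeroth-order term assembled from $Mh'$, $M'h$ and $hM'$. The \textbf{crucial structural feature} of the construction \eqref{A0inZ} is that $M(\theta)$ commutes with $h(\theta)$ for every $\theta$ (at $\theta_0$ this is automatic since $h(\theta_0)=0$, while away from $\theta_0$ one has $M=f\,h$ for a scalar function $f=-\tan\!\big(\tfrac{\theta+\varphi}{2}\big)$). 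Consequently the first-order term vanishes identically and $[i\widehat{A}_0,\widehat{H}_0]$ is again a multiplication operator. Using $h^2=\lambda^2 I$ (Cayley–Hamilton, since $\operatorname{tr}h=0$), the surviving zeroth-order part collapses to $f\,(\lambda^2)'\,I$, and from \eqref{EigvSymbol} one computes $f\,(\lambda^2)'=4|a|^2-\lambda^2$ with $\lambda^2=2|a|^2(1+\cos(\theta+\varphi))$. Transporting back through $\F$ yields $[iA_0,H_0]=4|a|^2-H_0^2$, the commutator identity of the statement. I expect the verification that $[M,h]\equiv0$ and that the zeroth-order part assembles \emph{exactly} into $4|a|^2-\lambda^2$ (rather than some other function of $\theta$) to be the main obstacle, since this is precisely where the phases $e^{\pm i\varphi_1}$, $e^{\pm i\varphi}$ in \eqref{A0inZ} and the vanishing of $h$ at the conical point must be tracked carefully.

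Finally I would extract the spectral conclusions from this identity, as in Theorem \ref{MainAk}. Set $g(t):=4|a|^2-t^2$. On $\sigma(H_0)=\con{I_-}\cup\con{I_+}=[-2|a|,2|a|]$ one has $g\ge 0$, with $g>0$ precisely on $(-2|a|,2|a|)$ and $g(\pm2|a|)=0$. Hence for any Borel set $\Lambda$ with $\con{\Lambda}\subseteq(-2|a|,2|a|)$ the relation $E_\Lambda(H_0)[iA_0,H_0]E_\Lambda(H_0)=g(H_0)E_\Lambda(H_0)\ge\big(\min_{\con{\Lambda}}g\big)E_\Lambda(H_0)$ is a strict Mourre estimate for $+A_0$, so $(-2|a|,2|a|)\subseteq\mu_{A_0}^+(H_0)$; conversely no positive lower bound survives as the band edges are approached, whence $\pm2|a|$ are critical and $\kappa_{A_0}(H_0)=\{\pm2|a|\}$. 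Since $\E_p(H_0)=\emptyset$ by Proposition \ref{Prop1}, Theorem \ref{MainMourre1} gives $\tilde{\mu}_{A_0}(H_0)\setminus\mu_{A_0}(H_0)=\emptyset$, and as $g\ge0$ prevents $\mu_{A_0}^-(H_0)$ from contributing anything in the interior, the three sets $\tilde{\mu}_{A_0}(H_0)$, $\mu_{A_0}(H_0)$ and $\mu_{A_0}^+(H_0)$ all coincide with $(-2|a|,2|a|)$, completing the proof.
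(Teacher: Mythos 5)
Your proposal is correct and follows essentially the same route as the paper's proof: Fourier conjugation, the pointwise commutation of the coefficient matrix $M$ with the symbol $h$ (which the paper merely asserts and you justify via the factorization $M=-\tan\!\big(\tfrac{\theta+\varphi}{2}\big)h$), the resulting collapse of $[i\widehat{A}_0,\widehat{H}_0]$ to multiplication by $4|a|^2-\lambda^2$, and the spectral conclusions via Theorem \ref{MainMourre1} combined with $\E_p(H_0)=\emptyset$ from Proposition \ref{Prop1}. One caveat applying equally to your argument and the paper's: with the phases exactly as printed in \eqref{A0inZ}, the Fourier image of the coefficient of $X$ is the entrywise conjugate of $-\tan\!\big(\tfrac{\theta+\varphi}{2}\big)h$ and does \emph{not} commute with $h$, so the crucial identity $[M,h]\equiv 0$ (and hence the whole computation) holds for the evidently intended operator with conjugated phases, pointing to a typo in \eqref{A0inZ} rather than a flaw in either proof.
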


\begin{proof}
 Let $ \F : \H_\Z \to L^2(\T;\C^2)$ be the Fourier transform defined in \eqref{FourierT},  $\widehat{H}_0:=  \F H_0 \F^{-1}$ and $\widehat{A}_0 := \F A_0 \F^{-1}$. Then $\widehat{H}_0$ is the multiplication operator defined by \eqref{MultOperator}, and $\widehat{A}_0$ is the unique self-adjoint extension of the operator acting on $C^\infty(\T;\C^2)$ by
\begin{equation*}\label{AkNoGap}
  M(- i \partial_\theta) +  (- i \partial_\theta)M, 
\end{equation*}
where $M$ is the multiplication operator by the matrix-valued function
\begin{equation*}
    M(\theta) =-|a| \begin{pmatrix}
        0 & i e^{i \varphi_1} (1-e ^{i(\varphi+ \theta)}) \\
        - i e^{-i \varphi_1} (1-e ^{-i(\varphi+ \theta)}) & 0
    \end{pmatrix}.
\end{equation*}
One readily verifies that $M$ commutes with $\widehat{H}_0$. Hence 
\begin{align*}
    \left[i\widehat{A}_0, \widehat{H}_0 \right] =  M \left[\partial _ \theta, \widehat{H}_0 \right] +  \left[\partial _ \theta, \widehat{H}_0 \right] M =M\widehat{H}'_0 + \widehat{H}'_0  M =2\Rp\left(M \widehat{H}'_0\right).
\end{align*}
Since for all $\theta \in \T$ it holds that
\begin{equation*}
    M(\theta) \widehat{H}'_0(\theta) = -|a|^2 \begin{pmatrix}
        e^{-i(\varphi+\theta)} -1 & 0\\
        0 & e^{i(\varphi+\theta)}-1
    \end{pmatrix}, 
\end{equation*}
we have
\begin{equation*}
2    \Rp\left(M \widehat{H}'_0\right) = 2|a|^2 (1-\cos(\varphi + \theta)).
\end{equation*}
 
Combined  with the identity $\widehat{H}_0^2= 2|a|^2 (1+\cos(\varphi + \theta))$, this gives
$$ \left[i \widehat{A}_0, \widehat{H}_0 \right] = -\widehat{H}_0^2  +2|a|^2 +2|a|^2 = 4|a|^2-\widehat{H}_0^2.$$ 
Applying the Fourier transform we get the claimed expression for  $[iA_0, H_0]$. By induction, we infer that $H_0 \in C^\infty(A_0)$. The identity $ \mu_{A_0}(H_0) = \mu^+_{A_0}(H_0) = (-2|a|,2|a|) $ is deduced from the fact that the roots of the function $\sigma(H_0) \ni t \mapsto 4|a|^2-t^2 \in \R$ are $\pm 2|a|$, and this function can be bounded from below by a positive constant on any compact interval $I \subseteq (-2|a|, 2|a|)$.  Finally, since $\E_p(H_0) = \emptyset$ (see Proposition \ref{Prop1}), by Theorem \ref{MainMourre1}  we conclude that  $ \mu_{A_0}(H_0)=  \tilde{\mu}_{A_0}(H_0)$ and $\kappa_A(H_0)=\{\pm 2|a| \}$.  
\end{proof}

Now, as in Remark \ref{rmkH1}, let $P \in \B\left(\H_{\Z}\right)$ denote the orthogonal projection of $\H_{\Z}$ onto $\H_{\Z_+}$. We define  the self-adjoint operator $A_0^+$  on $\H_{\Z_+}$ by
\begin{equation*}
    A_0^+ := PA_0P.
\end{equation*}
Then $A_0^+$ is the closure of the operator acting on $ \ell^2_c(\Z_{+}; \C^2)$ by
\begin{equation}\label{A0inZplus}
    -|a|\begin{pmatrix}
        0 & i e^{i\varphi_1} (I - e^{i \varphi} S) \\
        -i e^{-i\varphi_1} (I - e^{-i\varphi}S^*)  
    \end{pmatrix}X^+ + h.c.
\end{equation}
The next result is the analog of Theorem \ref{MainA0} for $H_0^+$ and $A_0^+$. Observe in particular that by Proposition \ref{Prop1} and Theorem \ref{MainMourre1}, $\tilde{\mu}_{A_0^+}(H_0^+) = \mu_{A_0^+}(H_0^+)$ because $ \E_p \left( H_0^+\right)=\emptyset$.

\begin{theo}\label{MainA0Unilateral} The operator $H_0^+$ is of class $ C^\infty(A_0^+)$ and $$\left[A_0^+, H_0^+ \right] \simeq 4|a|^2-(H_0^+)^2.$$
Furthermore,  $\tilde{\mu}_{A_0^+}(H_0^+) = \mu_{A_0^+}(H_0^+) = \mu^+_{A_0^+}(H_0^+) = (-2|a|,2|a|)$ and $\kappa(H_0^+, A_0^+)= \{\pm 2|a|\}$.
\end{theo}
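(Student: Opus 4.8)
The plan is to reproduce, with $A_0$ in place of $A_k$, the half-line reduction carried out for $A_k^+$ in Theorems~\ref{MourreH0+} and \ref{MainAkUnilateral}, feeding in the clean full-line identity $[iA_0,H_0]=4|a|^2-H_0^2$ of Theorem~\ref{MainA0}. The first step is the analogue of Lemma~\ref{PAkPperp}: computing the matrix elements of $A_0$ in the basis $\{\delta_n^-,\delta_n^+\}$ and using that the operator in \eqref{A0inZ} couples only sites at distance at most one, I would check that the defect operators $PA_0P^\perp$ and $P^\perp A_0P$ are of finite rank, with ranges spanned by finitely many basis vectors and hence contained in $\ell^2_c(\Z_+;\C^2)\subseteq\D(A_0)$. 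Combined with Lemma~\ref{FuncH0}, which gives that $PH_0P^\perp$ and $P^\perp H_0P$ are finite rank, this controls every boundary term created by compressing to $\H_{\Z_+}$.

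Next I would establish the commutator identity. Writing $A_0^+=PA_0P$, $H_0^+=PH_0P$ and inserting $I=P+P^\perp$ between the factors gives
\begin{equation*}
[iA_0^+,H_0^+]=P[iA_0,H_0]P-iPA_0P^\perp H_0P+iPH_0P^\perp A_0P,
\end{equation*}
where the last two terms are finite rank by the previous step. Substituting $[iA_0,H_0]=4|a|^2-H_0^2$ and rewriting $PH_0^2P=(H_0^+)^2+PH_0P^\perp H_0P$, whose second summand is finite rank by Lemma~\ref{FuncH0}, I obtain the exact identity
\begin{equation*}
[iA_0^+,H_0^+]=4|a|^2-(H_0^+)^2+K,
\end{equation*}
with $K$ a finite-rank operator whose range and co-range lie in $\ell^2_c(\Z_+;\C^2)$. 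In particular $[iA_0^+,H_0^+]\simeq 4|a|^2-(H_0^+)^2$; since the right-hand side is bounded and $\ell^2_c(\Z_+;\C^2)$ is a core for $A_0^+$ on which the commutator form is computed, this yields $H_0^+\in C^1(A_0^+)$.

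To upgrade this to $C^\infty(A_0^+)$ I would bootstrap. Because $A_0^+$ maps $\ell^2_c(\Z_+;\C^2)$ into itself, this subspace is contained in $\bigcap_n\D((A_0^+)^n)$, so any finite-rank operator with range and co-range in $\ell^2_c(\Z_+;\C^2)$ lies in $C^\infty(A_0^+)$; in particular $K\in C^\infty(A_0^+)$. Then I argue by induction: if $H_0^+\in C^m(A_0^+)$ then $(H_0^+)^2\in C^m(A_0^+)$, since $C^m(A_0^+)$ is an algebra (Remark~\ref{starAlgebras}), so $[iA_0^+,H_0^+]=4|a|^2-(H_0^+)^2+K\in C^m(A_0^+)$; together with $H_0^+\in C^1(A_0^+)$ this gives $H_0^+\in C^{m+1}(A_0^+)$. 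Starting from $m=1$ yields $H_0^+\in C^\infty(A_0^+)$.

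Finally I would read off the Mourre sets from the identity. For any compact $I\subseteq(-2|a|,2|a|)$ the function $t\mapsto 4|a|^2-t^2$ is bounded below on $I$ by some $c>0$, so $(4|a|^2-(H_0^+)^2)E_I(H_0^+)\ge c\,E_I(H_0^+)$ and the commutator identity gives a strict Mourre estimate for $+A_0^+$; hence $(-2|a|,2|a|)\subseteq\mu^+_{A_0^+}(H_0^+)$. In the gapless case $\sigma_{\ess}(H_0^+)=[-2|a|,2|a|]$ and $\E_p(H_0^+)=\emptyset$ by Proposition~\ref{Prop1}, so Theorem~\ref{MainMourre1} forces $\tilde{\mu}_{A_0^+}(H_0^+)=\mu_{A_0^+}(H_0^+)$, while the endpoints $\pm2|a|$ are zeros of $4|a|^2-t^2$ and admit no Mourre estimate; this gives the chain $\tilde{\mu}_{A_0^+}(H_0^+)=\mu_{A_0^+}(H_0^+)=\mu^+_{A_0^+}(H_0^+)=(-2|a|,2|a|)$ and $\kappa_{A_0^+}(H_0^+)=\{\pm2|a|\}$. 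The main obstacle is not any single estimate but the bookkeeping of the boundary corrections: one must verify that compressing to the half-line produces only finite-rank operators supported near $n=0$, so that these corrections are automatically $C^\infty(A_0^+)$ and the full-line identity of Theorem~\ref{MainA0} survives modulo compacts and closes the bootstrap.
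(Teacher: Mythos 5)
Your proposal is correct, and its skeleton is the paper's: the paper's own proof of this theorem is a one-line pointer to Theorem \ref{MainA0} together with ``arguments analogous to'' Theorems \ref{MourreH0+} and \ref{MainAkUnilateral}, i.e.\ compress to the half-line, show the defect operators $PA_0P^\perp$, $P^\perp A_0P$ (and $PH_0P^\perp$, $P^\perp H_0P$) are finite rank, transfer the full-line commutator, and recover the Mourre sets via Proposition \ref{Prop1} and Theorem \ref{MainMourre1}. Your execution differs in two refinements, both to your advantage. First, you bypass the Urysohn-function localization that drives the proof of Theorem \ref{MainAkUnilateral} (needed there because for $k>0$ the identity \eqref{MEAkwithGap1} only holds after spectral localization inside $I_\pm$): since here $[iA_0,H_0]=4|a|^2-H_0^2$ is a global polynomial identity, the purely algebraic compression $PH_0^2P=(H_0^+)^2+PH_0P^\perp H_0P$, whose second term is finite rank by Lemma \ref{FuncH0}, already yields $[iA_0^+,H_0^+]\simeq 4|a|^2-(H_0^+)^2$ with no spectral cut-offs. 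Second, your $C^\infty$ bootstrap --- finite-rank errors with range and co-range in $\ell^2_c(\Z_+;\C^2)$ lie in $C^\infty(A_0^+)$ because $A_0^+$ preserves compact supports, and then one inducts using that $C^m(A_0^+)$ is an algebra and that $T\in C^{m+1}(A_0^+)$ iff $T\in C^1(A_0^+)$ and $[iA_0^+,T]\in C^m(A_0^+)$ --- supplies something the cited theorems do not literally provide, since Theorem \ref{MourreH0+} only establishes $C^2(A_k^+)$; your induction is what actually closes the $C^\infty$ claim. Two cosmetic caveats: the bracket $[A_0^+,H_0^+]$ in the statement should be read as $[iA_0^+,H_0^+]$ (an inconsistency already present between the statement and proof of Theorem \ref{MainA0}, and your usage matches the proof), and your assertion that no Mourre estimate holds at the endpoints $\pm2|a|$ tacitly uses the standard Weyl-sequence argument (an orthonormal sequence localized where $4|a|^2-t^2$ is small defeats any estimate modulo compacts) --- but that is exactly the level of detail at which the paper itself leaves this point.
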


\begin{proof}
The proof follows from Theorem \ref{MainA0} and arguments analogous to those used in the proofs of Theorems~\ref{MourreH0+} and \ref{MainAkUnilateral}.     
\end{proof}

\section{Admissible perturbations}\label{admissiblesboth}
In this section, we describe the admissible perturbations for  $H_0^{(\G)}$ with respect to the conjugate operators defined in Section \ref{MourreEstimatesFreeModel}. To unify the notation, we define $A_k^{(\Z)}:= A_k$ and $A_k^{(\Z_{+})}:= A_k^+$ for each $k \in \Z_+$. Recall that if $k>0$, then  $A_k^{(\G)}$  is the unique self-adjoint extension of the operator defined in $\ell^2_c(\G; \C^2)$  by \eqref{SequenceAkinZ}, when $\G=\Z$, and \eqref{SequenceAkinZplus} when $\G=\Z_+$. While, the operator $A_0^{(\G)}$ is the self-adjoint extension of the operator acting in $\ell^2_c(\G; \C^2)$ by \eqref{A0inZ} or \eqref{A0inZplus}, depending on whether $\G = \Z$  or $\G=\Z_+$. The next lemma concerns the regularity of the shift operators on $\H_\G$ with respect to each $A_k^{(\G)}$.

\begin{lem}\label{shiftRegularity}
    The shift operators $S$ and $S^{*}$ on $\H_{\G}$ are of class $C^{\infty}\left(A_k^{(\G)} \right)$ for each $k \in \Z_+$.   
\end{lem}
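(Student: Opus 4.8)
The plan is to reduce the statement to the boundedness of all iterated commutators of $S$ and $S^*$ with $A_k^{(\G)}$, and to organize these commutators inside a set of bounded operators that is stable under commutation. By the characterization of the regularity classes recalled in Remark \ref{starAlgebras}, applied inductively, a bounded operator $T$ belongs to $C^\infty\left(A_k^{(\G)}\right)$ as soon as $T$ and every $m$-fold commutator $\mathrm{ad}_{iA_k^{(\G)}}^m(T)$, computed as a sesquilinear form on the core $\ell^2_c(\G;\C^2)$, extends to a bounded operator. Thus it suffices to exhibit, for each $k$, a linear set $\mathcal{W}$ of bounded operators with $S, S^* \in \mathcal{W}$ such that $T \mapsto \left[iA_k^{(\G)}, T\right]$ maps $\mathcal{W}$ into itself.

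For $\G = \Z$ I would work in the Fourier picture of the proofs of Theorems \ref{MainAk} and \ref{MainA0}. Conjugating by $\F$ from \eqref{FourierT}, the operator $\F A_k^{(\Z)}\F^{-1}$ is, for every $k\ge 0$, a first-order differential operator on $L^2(\T;\C^2)$ with smooth matrix-valued coefficients, while $\F S \F^{-1}$ and $\F S^* \F^{-1}$ are multiplication by the smooth functions $e^{i\theta}I_2$ and $e^{-i\theta}I_2$. The commutator of a first-order differential operator with a smooth multiplication operator is again a bounded smooth multiplication operator, so all iterated commutators stay bounded; moreover multiplication by a smooth function preserves the domain $H^1(\T;\C^2)$ of $\F A_k^{(\Z)}\F^{-1}$. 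Since the classes $C^m$ are preserved under the unitary conjugation $\F$, this gives $S, S^* \in C^\infty\left(A_k^{(\Z)}\right)$ for all $k$. In position space, for $k>0$ the same conclusion follows from the explicit identity $\left[iA_k, S^{j}\right] = -\tfrac{|a||b|\,j}{2}\,B\,S^{j}$, with $B := e^{ik\varphi}S^{k}-e^{-ik\varphi}S^{-k}$, obtained from $[X, S^{j}] = jS^{j}$ and $[B, S^{j}] = 0$; here $\mathcal{W} := \spa\{S^{j} : j \in \Z\}$ is manifestly invariant.

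The delicate case is $\G = \Z_+$, where $A_k^{(\Z_+)} = P A_k P$ and the shifts fail to commute: $[S, S^*]$ equals, up to sign, the finite-rank projection onto the site $n=0$, so that $[S^{*k}, S]$ and $[S^{k}, S^*]$ are finite-rank operators whose range and corange lie in $\ell^2_c(\Z_+;\C^2)$. I would enlarge the invariant set to $\mathcal{W} := \spa\{S^{m}S^{*n} : m,n \in \Z_+\} + \mathcal{R}$, where $\mathcal{R}$ denotes the finite-rank operators with range and corange in $\ell^2_c(\Z_+;\C^2)$. The two claims to verify are: (a) commuting $A_k^{(\Z_+)}$ with a word $S^{m}S^{*n}$ yields a leading combination of such words (using $S^{*m}S^{m}=I$ to reduce products) plus finite-rank errors in $\mathcal{R}$, the errors arising precisely from the rank defect of $[S,S^*]$ and from $X^{(\Z_+)}$ merely rescaling the fixed basis vectors $\delta_j$; and (b) $\mathcal{R}$ is itself stable under $\mathrm{ad}_{iA_k^{(\Z_+)}}$, because $A_k^{(\Z_+)}$ maps $\ell^2_c(\Z_+;\C^2)$ into itself, so that $A_k^{(\Z_+)}F$ and $FA_k^{(\Z_+)}$ remain finite rank with compactly supported range and corange for every $F \in \mathcal{R}$. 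Granting (a) and (b), $\mathcal{W}$ consists of bounded operators and is stable under commutation, yielding $S, S^* \in C^\infty\left(A_k^{(\Z_+)}\right)$; the case $k=0$ is identical, using the explicit form \eqref{A0inZplus} of $A_0^+$.

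The main obstacle is precisely the bookkeeping in (a)–(b) on the half-line: one must track how the finite-rank corrections generated by the noncommutativity of $S$ and $S^*$ behave under repeated commutation with the unbounded operator $X^{(\Z_+)}$, and check they never escape $\mathcal{R}$. This is exactly where the stability of $\ell^2_c(\Z_+;\C^2)$ under $A_k^{(\Z_+)}$ — equivalently, the finite-rank nature of $PA_kP^\perp$ and $P^\perp A_kP$ established in Lemma \ref{PAkPperp} — is indispensable.
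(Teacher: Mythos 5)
Your proposal is correct, and at bottom it runs on the same engine as the paper's proof: exhibit a linear set of bounded operators containing $S$ and $S^*$ that is stable under $T \mapsto \left[iA_k^{(\G)}, T\right]$ computed on the core $\ell^2_c(\G;\C^2)$, and induct. The difference lies in the execution. The paper treats $\Z$ and $\Z_+$ simultaneously and fully explicitly: it computes $\left[iA_k^{(\G)}, S\right]$ on $\ell^2_c(\G;\C^2)$ and finds that it lies in the algebra generated by the shifts, constant matrices, and the single rank-two projection $F^{(\Z_+)}=\chi_{\{0\}}\left(X^{(\Z_+)}\right)I_2$ (with $F^{(\Z)}=0$); it then computes $\left[iA_k^{(\G)}, F^{(\Z_+)}\right]$ and checks that it stays in the same small algebra, so the induction closes with no residual bookkeeping, and the explicit formulas are reused later (compare Lemma \ref{ClasC1Gap}). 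You instead use a Fourier argument on $\Z$ (clean, and in the spirit of how the paper proves $H_0\in C^\infty(A_k)$ in Theorem \ref{MainAk}) and, on $\Z_+$, the larger invariant class $\spa\{S^mS^{*n}\}+\mathcal{R}$; this buys robustness (no formulas needed, only locality of $A_k^{(\Z_+)}$ and the finite-rank defect of the half-line shifts), at the price of the deferred bookkeeping (a)--(b). That bookkeeping does close, but it hinges on one cancellation your sketch leaves implicit: writing $B^+:=e^{ik\varphi}S^k-e^{-ik\varphi}S^{*k}$, the commutator $\left[B^+, S^mS^{*n}\right]$ has \emph{no} word component at all --- it is purely finite rank with compactly supported range and corange, because the corresponding commutator on $\Z$ vanishes identically and passing to $\Z_+$ costs only finite rank. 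This is precisely what prevents the terms $X^{(\Z_+)}\left[B^+,W\right]+\left[B^+,W\right]X^{(\Z_+)}$ from producing unbounded contributions of the form word times $X^{(\Z_+)}$; the word part of $\left[iA_k^{(\Z_+)},W\right]$ comes only from $\left[X^{(\Z_+)},W\right]=(m-n)W$. Finally, two harmless inaccuracies: the domain of $\F A_k^{(\Z)}\F^{-1}$ is strictly larger than $H^1(\T;\C^2)$ (its coefficient $\sin(k(\theta+\varphi))$ vanishes at some points), so you should lean on your form-on-core criterion rather than on that domain identification; and the stability of $\ell^2_c(\Z_+;\C^2)$ under $A_k^{(\Z_+)}$ is read off directly from \eqref{SequenceAkinZplus} and \eqref{A0inZplus} rather than being equivalent to Lemma \ref{PAkPperp}.
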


\begin{proof} Let $k \in \Z_+$ and define $F^{(\Z)} := 0 $ and $ F^{(\Z_+)} := \chi_{\{0 \}}\left(X^{(\Z_+)} \right)  I_2 $. On the subspace $\ell^2_c(\G; \C^2)$ we have
\begin{align*}
      \frac{4}{|a||b|}  \left[iA_k^{(\G)}, S\right] &= -2  e^{ik \varphi} S^{k+1}  + 2e^{-ik \varphi} S^{*(k-1)}   + e^{-ik \varphi} (k-2) F^{(\G)}   S^{*(k-1)} 
   \end{align*}
for $k>0$, and
\begin{align*}
     -\frac{1}{|a|} \left[iA_0^{(\G)}, S\right] & = 2 S \begin{pmatrix}
         0 & -e^{i \varphi_1} \\
         e^{-i \varphi_1} & 0
     \end{pmatrix} + 2 S^{2} \begin{pmatrix}
         0 & e^{i \varphi_2} \\
         0 & 0
     \end{pmatrix} -  \left(2- F^{(\G)}  \right)\begin{pmatrix}
         0 & 0 \\
         e^{-i \varphi_2} & 0
     \end{pmatrix}.
\end{align*}
By density, these identities remain true on $\H_\G$. Thus, $S \in C^{1}\left(A_k^{(\G)} \right)$, and hence $S^{*} \in C^{1}\left(A_k^{(\G)} \right)$. Note that $$ \frac{4}{|a||b|}  \left[iA_k^{(\G)}, F^{(\Z_+)}\right] = ke^{ik\varphi} S^k  F^{(\Z_+)}  -ke^{-ik\varphi}  F^{(\Z_+)} S^{*k}$$
for $k>0$, and
$$   -\frac{1}{|a|} \left[iA_0^{(\G)}, F^{(\Z_+)} \right]  =  S F^{(\Z_+)} \begin{pmatrix}
    0 & e^{i\varphi_2} \\
    0 & 0
\end{pmatrix} +  \begin{pmatrix}
    0 & 0 \\
     e^{-i\varphi_2}  & 0
\end{pmatrix} F^{(\Z_+)} S^*.  $$
An argument by induction yields that $S$ and $S^{*}$ are of class $C^{\infty}\left(A_k^{(\G)} \right).$ 
\end{proof}

From Remark \ref{starAlgebras} and Lemma \ref{shiftRegularity}, we know that if   $V \in \K(\H_{\G})$ is given by \eqref{GeneralPotential}, then $V \in \Cc^{1,1}\left(A_k^{(\G)}\right) $ or $V \in C^m\left(A_k^{(\G)}\right)$,  for some $m \in \Z_+$,  provided that for each $j=0,1, \ldots,N$, the component $V_j$ belongs to the same class. The next lemma gives criteria for the classes $C^1\left(A_k^{(\G)}\right)$ and $C^2\left(A_k^{(\G)}\right)$. In this result, we consider the sets $\Qq_{0,1}(\G)$ and $\Qq_{k,m}(\G)$, with $k>0$ and $m \in \{1,2\}$,   defined in \eqref{Q0set} and \eqref{Qkset}, respectively.

\begin{lem}\label{ClasC1Gap} Let $V \in \K(\H_{\G})$ be as in \eqref{GeneralPotential}, $k \in \Z_+$ and $m \in \{1,2\}$.

\begin{itemize}
    \item[(i)] If $k>0$ and   $V_j \in \Qq_{k,m}(\G)$ for each $j=0,1, \ldots,N$, then $V \in C^m\left(A_k^{(\G)}\right)$.
    \item[(ii)] If $k=0$ and  $V_j \in \Qq_{0,1}(\G)$  for all $j=0,1, \ldots, N$, then $V \in C^1\left(A_0^{(\G)}\right)$. 
\end{itemize}

\end{lem}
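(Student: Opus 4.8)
The plan is to reduce everything to a single commutator computation for a matrix-valued multiplication operator, and then to read off boundedness directly from the seminorms $q_{k,1}$, $q_{k,2}$ and the sequence $q_0$. As recorded just before the lemma, $C^m\!\left(A_k^{(\G)}\right)$ is a $*$-algebra (Remark \ref{starAlgebras}) containing the shifts $S, S^*$ (Lemma \ref{shiftRegularity}); since $V = V_0 + \sum_{j=1}^N(S^j V_j + V_j^* S^{*j})$ is a noncommutative polynomial in $S, S^*$ with coefficients $V_j, V_j^*$, it suffices to prove that each multiplication operator $W := V_j$ lies in $C^m\!\left(A_k^{(\G)}\right)$ (resp. in $C^1\!\left(A_0^{(\G)}\right)$). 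All commutators below are computed on $\ell^2_c(\G; \C^2)$, a core for every $A_k^{(\G)}$; membership in $C^1(A)$ then follows from boundedness of the associated commutator form on this core via the standard form criterion (cf. \cite{abmg} and Remark \ref{starAlgebras}(i)), and $C^2$ follows by iterating.

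For part (i) set $T_k := e^{ik\varphi}S^k - e^{-ik\varphi}S^{-k}$ on $\Z$ (with $S^{*k}$ in place of $S^{-k}$ on $\Z_+$), so that $A_k^{(\G)} = -\tfrac{|a||b|}{4i}\!\left(T_k X^{(\G)} + X^{(\G)}T_k\right)$. Since $W$ commutes with $X^{(\G)}$ and $S^{\pm k}W = (\tau^{\pm k}W)S^{\pm k}$, one obtains $[T_k, W] = e^{ik\varphi}(\tau^k W - W)S^k - e^{-ik\varphi}(\tau^{-k}W - W)S^{-k}$ and
$$[iA_k^{(\G)}, W] = -\tfrac{|a||b|}{4}\left([T_k, W]\,X^{(\G)} + X^{(\G)}\,[T_k, W]\right).$$
Pushing $X^{(\G)}$ through the shifts via $S^{\pm k}X^{(\G)} = (X^{(\G)}\mp k)S^{\pm k}$, each resulting term is a shift composed with the multiplication operator $n \mapsto n(\tau^{\pm k}W - W)(n)$, up to an index shift and a bounded correction proportional to $\|W\|_\infty$. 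Its norm is therefore controlled by $q_{k,1}(W)$, so the commutator form is bounded and $W \in C^1\!\left(A_k^{(\G)}\right)$ whenever $V_j \in \Qq_{k,1}(\G)$. For $m=2$ one commutes $A_k^{(\G)}$ once more: commuting $T_k$ through the first difference $\tau^k W - W$ yields the second difference $W - 2\tau^k W + \tau^{2k}W$, now weighted by $(X^{(\G)})^2$ and governed by $q_{k,2}(W)$, while the lower-order contributions are governed by $q_{k,1}(W)$; both are finite since $\Qq_{k,2}(\G)$ requires $q_{k,1}(W)+q_{k,2}(W)<\infty$. Hence $[iA_k^{(\G)}, W] \in C^1\!\left(A_k^{(\G)}\right)$ and $W \in C^2\!\left(A_k^{(\G)}\right)$.

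For part (ii) write $A_0^{(\G)} = M X^{(\G)} + X^{(\G)} M$, where $M = M_0 + M_+ S + M_- S^{-1}$ (with $S^*$ on $\Z_+$) is the self-adjoint operator of \eqref{A0inZ}--\eqref{A0inZplus}, with constant anti-diagonal $M_0$ and nilpotent $M_\pm$. As before $[iA_0^{(\G)}, W] = i\left([M, W]\,X^{(\G)} + X^{(\G)}\,[M, W]\right)$, and a direct block computation shows that the entries of $[M, W]$ are combinations, with $n$-independent coefficients (up to a single $\tau^{\pm1}$ shift), of $W^{12}$, $W^{21}$, $W^{11}-W^{22}$ and $\tau W^{22}-W^{11}$ --- precisely the four terms defining $q_0(W)$ in \eqref{seqW0}. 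Weighting by $X^{(\G)}$ then bounds the commutator form by $\sup_{n\in\G}|n|\,q_0(W)(n)$, which is finite exactly when $W \in \Qq_{0,1}(\G)$; thus $W \in C^1\!\left(A_0^{(\G)}\right)$.

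The half-line case $\G=\Z_+$ runs through the same computations, the only change being that replacing $S^{-k}$ (resp. $S^{-1}$) by $S^{*k}$ (resp. $S^*$) and using $A_k^{(\Z_+)}=PA_kP$ produces boundary corrections supported near $n=0$; as in Lemmas \ref{shiftRegularity} and \ref{PAkPperp} these are finite-rank, hence bounded, and do not affect membership in the regularity classes. The step I expect to demand the most care is the second-order bookkeeping when $m=2$: one must check that all lower-order terms organize correctly so that only the $(X^{(\G)})^2$-weighted second difference carries the leading weight, and that the passage from the formal commutator on $\ell^2_c(\G;\C^2)$ to genuine $C^2\!\left(A_k^{(\G)}\right)$ membership is justified on the core.
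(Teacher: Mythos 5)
Your proposal is correct and follows essentially the same route as the paper's proof: reduce to a single matrix-valued multiplication operator $W$ via the $*$-algebra property of $C^m\left(A_k^{(\G)}\right)$ and Lemma \ref{shiftRegularity}, compute $\left[iA_k^{(\G)}, W\right]$ on $\ell^2_c(\G;\C^2)$ and bound it by $q_{k,1}(W)$ (the paper's \eqref{FirstCommutatorAk}), iterate for the second commutator whose leading term is the $X^2$-weighted second difference controlled by $q_{k,2}(W)$ with lower-order terms controlled by $q_{k,1}(W)$ (the paper's operators $\mathfrak{D}_{\pm 1}(W)$, $\mathfrak{D}_0(W)$), and for $k=0$ identify the entries of the commutator with exactly the four combinations defining $q_0(W)$ (the paper's $B_{\pm 1}(W)$, $B_0(W)$ in \eqref{Dzero1}--\eqref{Dzero3})). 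The only cosmetic differences are your factorization $A_k^{(\G)} = -\tfrac{|a||b|}{4i}\left(T_kX^{(\G)}+X^{(\G)}T_k\right)$ and your handling of $\G=\Z_+$ by projecting from $\Z$, where the paper instead works directly with \eqref{SequenceAkinZplus} and absorbs the boundary contributions into the finite-rank operators $F^{(\G)}$ and $F_k^{(\G)}$.
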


\begin{proof} Let $ W \in \ell^\infty\left(\G; M_2(\C)\right)$. We first show (i). Suppose that $k>0$ and $W \in \Qq_{k,1}(\G)$. On the space $\ell^2_c(\G; \C^2)$ one has
\begin{equation}\label{FirstCommutatorAk}
    \frac{4}{|a||b|} \left [i  A_k^{(\G)}, W \right] =    e^{i k \varphi} S^k
    (2X^{(\G)}+k)(\tau^{-k} W-W)+(2X^{(\G)}+k) (\tau^{-k} W -W) e^{-ik \varphi}S^{*k} 
\end{equation}
From this, we deduce that  $W \in C^1\left(A_k^{(\G)}\right)$.

Now suppose that $W \in \Qq_{k,2}(\G)$. On the space $\ell^2(\G; \C^2)$ we have that 
\begin{align*}
    \left[[i{A}_k^{(\G)}, \left[i{A}_k^{(\G)},W\right]\right] & = -\frac{1}{16} |a|^2|b|^2 e^{i\varphi X^{(\G)}}  S^{2k}(\mathfrak{D}_1(W)+ \mathfrak{D}_{-1}(W))  e^{-i\varphi X^{(\G)}} -\frac{1}{8} |a|^2|b|^2  \mathfrak{D}_0(W) \\
      & \quad -\frac{1}{16} |a|^2|b|^2  (\mathfrak{D}_1(W)+ \mathfrak{D}_{-1}(W)) S^{*2k} e^{-i\varphi X^{(\G)}}
\end{align*}
where $\mathfrak{D}_j(W)$ are multiplication operators on $\H_{\G}$ given by
\begin{align*}
     \mathfrak{D}_1(W) & := X^{(\G)}(2X^{(\G)}+k)(2 \tau^{-k}W -W - \tau^{-2k}W) + k(2X^{(\G)}+k) (\tau^{-k}W -W  )  \\
  &  \quad  -2kX^{(\G)} (\tau^{-2k}W -\tau^{-k}W)  \\
  \mathfrak{D}_0(W) & := (4 (X^{(\G)})^2+k^2) (2W-\tau^k W -\tau^{-k}W) - 4X^{(\G)} k(\tau^{-k}W -\tau^{k}W ) \\
&   \quad -(2x-K)^2 F_k^{(\G)}, \\
  \mathfrak{D}_{-1}(W) & :=  X^{(\G)}(2X^{(\G)}+k)(2 \tau^{-k}W -W - \tau^{-2k}W)  +2k(2X^{(\G)} +k)(\tau^{-k}W-W) \\
  & \quad -k(4  X^{(\G)}+3k) (\tau^{-2k}W -\tau^{-k}W).
\end{align*}
Here we let $F_k^{(\Z)} :=0$ and $F_k^{(\Z_+)}:
 = \chi_{(-\infty, k)}( X^{\Z_+})$ for each  $k>0$. Hence $ W \in C^2\left(A_k^{(\G)}\right)$.
 
Therefore, by Lemma \ref{shiftRegularity}, we infer that if $m \in \{1,2\}$ and $V_j \in \Qq_{k,m}(\G)$ for each $j=0,1, \ldots,N$, then $V \in C^m\left(A_k^{(\G)}\right)$. This finishes the proof of (i).

Finally, we show (ii). Assume that $W \in \Qq_{0,1}(\G)$.  On the space $\ell_c^2(\G;\C^2)$ one has that
    \begin{equation}\label{CommutatorA0}
        \left[iA_0^{(\G)}, W \right] = -|a| (e^{i \varphi_2}S  (2 X^{(\G)}+1) B_1(W) -2   X^{(\G)} B_0(W)+ e^{-i \varphi_2}(2 X^{(\G)}+1)B_{-1}(W)S^{*}),
    \end{equation}
where $B_1(W)$, $B_0(W)$ and $B_{-1}(W)$ are multiplication operators on $\ell_c^2(\G;\C^2)$ given  by
\begin{align} \label{Dzero1}
    B_1(W) & :=  \begin{pmatrix}
        W^{21} & W^{22}- \tau^{-1}W^{11} \\
        0 & -  \tau^{-1} W^{21} 
    \end{pmatrix} \\ \label{Dzero2}
    B_0(W) & :=\begin{pmatrix} 
        (e^{i \varphi_1}W^{21}+e^{-i \varphi_1}W^{12}) & e^{i \varphi_1}(W^{22}-W^{11}) \\
        e^{-i \varphi_1}(W^{22}-W^{11}) & -(e^{i \varphi_1}W^{21}+e^{-i \varphi_1}W^{12})
    \end{pmatrix} \\ \label{Dzero3}
    B_{-1}(W) & := \begin{pmatrix}
        W^{12} & 0\\
        W^{22}- \tau^{-1}W^{11}  & - \tau^{-1} W^{12}
    \end{pmatrix}.
\end{align}
By density, we conclude that $W \in C^1\left(A_k^{(\G)}\right)$. Together with Lemma~\ref{shiftRegularity}, this completes the proof.
\end{proof}

The following extension of \cite[Lemma~5.1]{mandich2017} is used to show a criterion for a perturbation $V$ belonging to  $\mathcal{C}^{1,1}\left(A_k^{(\G)}\right)$.

\begin{lem}\label{AtoX} 
    For all $k \in \Z_{+}$ and $s>0$, $\left\bra A_k^{(\G)} \right\ket^s \left\bra X^{(\G)} \right\ket^{-s} $ and $\left\bra X^{(\G)} \right\ket^{-s} \left\bra A_k^{(\G)} \right\ket^s$ are bounded operators in $\H_{\G}$.
\end{lem}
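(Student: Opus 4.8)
The plan is to reduce the statement to a single boundedness assertion, compare $A_k^{(\G)}$ with $X^{(\G)}$ at first order, promote this to integer orders by an algebraic induction, and close the argument by interpolation. First, since $\bra A_k^{(\G)}\ket^s$ is self-adjoint and $\bra X^{(\G)}\ket^{-s}$ is bounded self-adjoint, the two operators in the statement are mutual adjoints,
$$\left(\bra A_k^{(\G)}\ket^s \bra X^{(\G)}\ket^{-s}\right)^* = \bra X^{(\G)}\ket^{-s}\,\bra A_k^{(\G)}\ket^s,$$
so boundedness of either is equivalent to boundedness of the other, and it suffices to treat $\bra A_k^{(\G)}\ket^s \bra X^{(\G)}\ket^{-s}$. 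Next I record the algebraic structure of the conjugate operators: using $[X^{(\G)},S^{\pm k}]=\pm k S^{\pm k}$ (with bounded boundary corrections built from $F^{(\Z_+)}$ when $\G=\Z_+$), the expressions \eqref{SequenceAkinZ}, \eqref{SequenceAkinZplus}, \eqref{A0inZ} and \eqref{A0inZplus} can be rewritten on the core $\ell^2_c(\G;\C^2)$ as
$$A_k^{(\G)} = B_k\,X^{(\G)} + R_k,\qquad B_k,\,R_k\in\B(\H_\G),$$
where $B_k$ and $R_k$ are finite combinations of shifts composed with bounded matrix multiplications; in particular $[X^{(\G)},B_k]$ and $[X^{(\G)},R_k]$ are again bounded. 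Since $X^{(\G)}\bra X^{(\G)}\ket^{-1}$ is bounded by the spectral theorem, this gives at once that $A_k^{(\G)}\bra X^{(\G)}\ket^{-1}$ is bounded, and hence, via $\bra A_k^{(\G)}\ket\le |A_k^{(\G)}|+1$, that $\bra A_k^{(\G)}\ket\,\bra X^{(\G)}\ket^{-1}$ is bounded; this settles $s=1$.

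To bootstrap to an arbitrary integer $n$, I would prove by induction, using $X^{(\G)}C=CX^{(\G)}+[X^{(\G)},C]$ with bounded commutators, that
$$\left(A_k^{(\G)}\right)^n=\sum_{j=0}^n C_{n,j}\left(X^{(\G)}\right)^j,\qquad C_{n,j}\in\B(\H_\G),$$
each $C_{n,j}$ being a product of $B_k,R_k$ and their $X^{(\G)}$-commutators. Because $(X^{(\G)})^j\bra X^{(\G)}\ket^{-n}$ is bounded for $0\le j\le n$, this shows that $(A_k^{(\G)})^n\bra X^{(\G)}\ket^{-n}$ is bounded, and then, using that the graph norm of $\bra A_k^{(\G)}\ket^n$ is equivalent to $\|\cdot\|+\|(A_k^{(\G)})^n\cdot\|$ on $\D((A_k^{(\G)})^n)$, that $\bra A_k^{(\G)}\ket^n\bra X^{(\G)}\ket^{-n}$ is bounded for every $n\in\Z_+$. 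Finally I would pass to non-integer $s$ by complex interpolation: for fixed $N$, the family $F(z):=\bra A_k^{(\G)}\ket^{z}\bra X^{(\G)}\ket^{-z}$ satisfies $\|F(it)\|=1$ (a product of unitaries) on $\re z=0$ and $\|F(N+it)\|\le\|\bra A_k^{(\G)}\ket^N\bra X^{(\G)}\ket^{-N}\|$ on $\re z=N$, so the three-lines theorem yields $\|F(s)\|<\infty$ for $0\le s\le N$; letting $N\to\infty$ covers all $s>0$. This is the mechanism of \cite[Lemma~5.1]{mandich2017} adapted to the block setting, and can equivalently be phrased through the interpolation of the Sobolev scales $\Hh_s(A_k^{(\G)})$ and $\Hh_s(X^{(\G)})$ in \cite{abmg}.

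The main obstacle is not the algebra but the analytic closure of the argument. On the one hand, the three-lines step must be justified for the unbounded family $F$, which I would do by estimating matrix elements $\innp{u}{F(z)v}$ for $u,v\in\ell^2_c(\G;\C^2)$, where all quantities are finite and $z\mapsto\innp{u}{F(z)v}$ is analytic and of admissible growth in the strip, and then extending by density; the abstract interpolation of $\Hh_s$-scales in \cite{abmg} packages precisely this. On the other hand, when $\G=\Z_+$ one must keep track of the boundary corrections (the terms $F^{(\Z_+)}$ and $F_k^{(\Z_+)}$ appearing in Lemmas \ref{shiftRegularity} and \ref{ClasC1Gap}); these perturb $B_k$ and $R_k$ only by bounded, finitely supported operators and therefore affect none of the estimates above, so the case $\G=\Z_+$ reduces to the same computation as $\G=\Z$.
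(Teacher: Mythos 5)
The paper offers no proof of this lemma: it is stated as an extension of \cite[Lemma~5.1]{mandich2017} and used as such, so there is no in-paper argument to compare against line by line. Your proposal is correct and supplies precisely the mechanism that citation stands for: the first-order decomposition $A_k^{(\G)}=B_kX^{(\G)}+R_k$ with $B_k,R_k$ bounded (which indeed works for $k=0$ and absorbs the half-line boundary terms), the inductive rewriting $\left(A_k^{(\G)}\right)^n=\sum_{j}C_{n,j}\left(X^{(\G)}\right)^j$ yielding boundedness of $\left\bra A_k^{(\G)}\right\ket^n\left\bra X^{(\G)}\right\ket^{-n}$ for integer exponents, and complex interpolation using unitarity of the imaginary powers to reach all real $s>0$. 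The only points to tighten in a full write-up are routine domain matters, which you flag: the algebraic identities hold a priori on $\ell^2_c(\G;\C^2)$ and must be propagated to $\D\left(\left(X^{(\G)}\right)^n\right)$ by truncation together with closedness of $\left(A_k^{(\G)}\right)^n$; the passage from boundedness of $A_k^{(\G)}\left\bra X^{(\G)}\right\ket^{-1}$ to that of $\left\bra A_k^{(\G)}\right\ket\left\bra X^{(\G)}\right\ket^{-1}$ is cleanest via $\left\|\left\bra A_k^{(\G)}\right\ket v\right\|^2=\left\|A_k^{(\G)}v\right\|^2+\|v\|^2$ rather than the operator inequality you quote; and the three-lines argument should be run on matrix elements with $u$ of compact $\left\bra A_k^{(\G)}\right\ket$-spectral support and $v\in\ell^2_c(\G;\C^2)$, then extended by density and closedness.
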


Remember that the set $\S(\G)$ is defined by \eqref{short}, while $\M_k(\G)$ is defined by \eqref{mediumk} for $k>0$, and \eqref{medium0} for $k=0$.

\begin{lem}\label{AdmissibleGap2} 
Let $V \in \K(\H_{\G})$ be as in \eqref{GeneralPotential} and $k \in \Z_+$. If $V_j \in \S(\G) + \M_k(\G)$ for all $j=0,1,\ldots, N$, then $V \in \mathcal{C}^{1,1}\left(A_k^{(\G)}\right)$. 
\end{lem}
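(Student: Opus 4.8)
The plan is to reduce the statement first to multiplication operators, then to the two building-block classes $\S(\G)$ and $\M_k(\G)$, and finally to transfer the defining integral of $\Cc^{1,1}\left(A_k^{(\G)}\right)$ to a Besov-type condition on $X^{(\G)}$ by means of Lemma \ref{AtoX}. First I would use that $\Cc^{1,1}\left(A_k^{(\G)}\right)$ is a $*$-subalgebra of $\B(\H_\G)$ (Remark \ref{starAlgebras}) that contains the shifts: by Lemma \ref{shiftRegularity} and Remark \ref{starAlgebras} we have $S, S^{*} \in C^\infty\left(A_k^{(\G)}\right) \subseteq \Cc^{1,1}\left(A_k^{(\G)}\right)$, hence so do all $S^j$ and $S^{*j}$. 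Writing $V$ as in \eqref{GeneralPotential}, each term $S^j V_j$ and $V_j^* S^{*j}$ is then a product of $\Cc^{1,1}$-elements as soon as the multiplication operator $V_j$ (and, by $*$-stability, its adjoint) lies in $\Cc^{1,1}\left(A_k^{(\G)}\right)$. Since the class is a linear space and $V_j \in \S(\G) + \M_k(\G)$, it suffices to prove the two implications
$$ W \in \S(\G) \ \Longrightarrow \ W \in \Cc^{1,1}\left(A_k^{(\G)}\right), \qquad W \in \M_k(\G) \ \Longrightarrow \ W \in \Cc^{1,1}\left(A_k^{(\G)}\right), $$
for an arbitrary multiplication operator $W \in \ell^\infty(\G; M_2(\C))$.

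For both implications I would estimate the symmetrized second difference $e^{iA_k^{(\G)}t} W e^{-iA_k^{(\G)}t} + e^{-iA_k^{(\G)}t} W e^{iA_k^{(\G)}t} - 2W$ that enters the definition of $\Cc^{1,1}$, inserting a dyadic localization $\sum_l \theta_l\left(X^{(\G)}\right)$ of the identity into the scales $|n| \sim 2^l$. The comparison of Sobolev scales furnished by Lemma \ref{AtoX} — the boundedness of $\left\bra A_k^{(\G)}\right\ket^s \left\bra X^{(\G)}\right\ket^{-s}$ — is exactly what allows me to control the unbounded generator $A_k^{(\G)}$ by the concrete position operator $X^{(\G)}$ in these localizations, and thereby to convert the $dt/t^2$ integral into an integral over dyadic scales $dr$. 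In the short-range case this yields an upper bound controlled by $\int_1^\infty \sup_{\beta r < |n| < \gamma r}\|W(n)\|\, dr$, which is finite precisely by definition \eqref{short} of $\S(\G)$.

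For the medium-range case the point is that $W$ need not decay itself, but the first commutator is governed by a finite difference of $W$. When $k>0$, formula \eqref{FirstCommutatorAk} shows that $\left[iA_k^{(\G)}, W\right]$ is controlled by $\tau^{-k}W - W$ multiplied by $X^{(\G)}$ and a shift; when $k=0$, formula \eqref{CommutatorA0} plays the same role, the relevant differences being the entries assembled into $q_0(W)$ in \eqref{seqW0}. Exploiting this telescoping structure, the $X^{(\G)}$-factor is absorbed against the dyadic scale and the $\Cc^{1,1}$-integral reduces to $\int_1^\infty \sup_{\beta r < |n| < \gamma r}\|(\tau^k W - W)(n)\|\, dr$ (resp. $\int \sup_{\beta r<|n|<\gamma r} q_0(W)(n)\,dr$), finite by \eqref{mediumk} for $k>0$ (resp. \eqref{medium0} for $k=0$). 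I would stress that the shortcut of Remark \ref{RmkC01} is unavailable here: the condition defining $\M_k(\G)$ is an integrated condition on $\tau^k W - W$ and does not force $\sup_n |n|\,\|(\tau^k W - W)(n)\| < \infty$, so $W$ need not even belong to $C^1\left(A_k^{(\G)}\right)$, and membership in $\Cc^{1,1}$ must be extracted directly at the level of the defining integral.

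The main obstacle I anticipate is the rigorous control of the flow $e^{iA_k^{(\G)}t}$: since $A_k^{(\G)}$ is unbounded and built from the position operator, I must quantify how conjugation by $e^{iA_k^{(\G)}t}$ displaces the localizations $\theta_l\left(X^{(\G)}\right)$ and obtain transport estimates uniform in the scale, which is where Lemma \ref{AtoX} does the heavy lifting. A secondary technical point is the half-line case $\G = \Z_+$, where the commutators carry the boundary correction terms (the $F^{(\G)}$ contributions appearing in Lemmas \ref{shiftRegularity} and \ref{ClasC1Gap}); these are finite-rank, hence harmless for the $\Cc^{1,1}$ estimate, but they must be isolated explicitly so as not to disturb the scale-by-scale bookkeeping.
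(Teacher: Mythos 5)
Your reduction to multiplication operators via the $*$-algebra structure of $\Cc^{1,1}\left(A_k^{(\G)}\right)$ and Lemma \ref{shiftRegularity} is correct and matches the paper, but your treatment of the class $\M_k(\G)$ rests on a false premise. You assert that the integrated condition defining $\M_k(\G)$ ``does not force $\sup_n |n|\,\|(\tau^k W - W)(n)\| < \infty$, so $W$ need not even belong to $C^1\left(A_k^{(\G)}\right)$,'' and you conclude that the shortcut of Remark \ref{RmkC01} is unavailable. This is exactly backwards: Lemma \ref{AppendixLemma}(ii) states that $\int_{1}^\infty \sup_{\beta r < n < \gamma r} a_n\, dr < \infty$ implies that the sequence $(n a_n)_{n}$ is \emph{bounded}. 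Applied to $a_n = \|(\tau^k W - W)(n)\|$, this shows that $W \in \M_k(\G)$ does force $\sup_n |n|\,\|(\tau^k W - W)(n)\| < \infty$; hence the multiplication operators $D_{k,l}(W)$ built from $X^{(\G)}(\tau^{-k}W - W)$ (respectively from the entries of $q_0(W)$ when $k=0$) are bounded, the commutator identity \eqref{FirstCommutatorAk} (resp. \eqref{CommutatorA0}) extends to all of $\H_\G$, and $W \in C^1\left(A_k^{(\G)}\right)$. The paper's proof is precisely the route you reject: establish $C^1$, then show $\left[iA_k^{(\G)}, W\right] \in \Cc^{0,1}\left(A_k^{(\G)}\right)$ by applying \cite[Theorem 7.5.8]{abmg} (via Lemma \ref{AtoX}) to the commutator, whose coefficients carry the integrable decay, and conclude by Remark \ref{RmkC01}.

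This matters because the alternative you propose in its place is not carried out and would be substantially harder. Estimating the symmetrized second difference $e^{iA_k t} W e^{-iA_k t} + e^{-iA_k t} W e^{iA_k t} - 2W$ directly requires genuine propagation estimates for the flow $e^{iA_k^{(\G)}t}$ through dyadic localizations in $X^{(\G)}$, and Lemma \ref{AtoX} cannot do this ``heavy lifting'': it is a static comparison of Sobolev scales ($\left\bra A_k^{(\G)}\right\ket^s \left\bra X^{(\G)}\right\ket^{-s}$ bounded), not a transport estimate. The machinery that converts position-space dyadic decay into the $dt/t^2$ (or $dt/t$) integrability is exactly the content of \cite[Theorem 7.5.8]{abmg} and \cite[Theorem 6.1]{B-Sah2}, which the paper invokes as black boxes after Lemma \ref{AtoX} makes them applicable; your sketch amounts to reproving these theorems without supplying any of the estimates. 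The same remark applies to your short-range case: the correct and short argument is that $W \in \S(\G)$ gives $(n\|W(n)\|)_n$ bounded by Lemma \ref{AppendixLemma}, hence compactness, and then $\Cc^{1,1}$ membership again follows from Lemma \ref{AtoX} together with \cite[Theorem 7.5.8]{abmg}, rather than from a hand-built dyadic argument.
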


\begin{proof}
We first show that if $W \in \S(\G)$, then  $W \in \K(\H_{\G}) \cap \mathcal{C}^{1,1}\left(A_k^{(\G)}\right).$ By Lemma \ref{AppendixLemma} we know that $(n \left\|W(n)\right\|)_{n \in \G}$ is a bounded sequence. This  implies that $\lim_{n \to \infty}  \left\|W(n)\right\|=0$ and hence  $ W \in \K(\H_{\G})$. Furthermore, by Lemma \ref{AtoX} and  \cite[Theorem 7.5.8]{abmg}, we deduce that  $ W \in \mathcal{C}^{1,1}\left(A_k^{(\G)} \right)$ for all $k \in \Z_+$.

We now show that if $W \in \M_k(\G)$ then $W$ is of class $ C^1\left(A_k^{(\G)} \right)$ and $\left[ iA_k^{(\G)} , W \right] \in \Cc^{0,1}\left(A_k^{(\G)} \right)$. In particular, $W \in \Cc^{1,1}\left(A_k^{(\G)} \right)$.  By \eqref{FirstCommutatorAk} and \eqref{CommutatorA0}, on the space $ \ell^2_c(\G; \C^2)$ one has that
\begin{equation}\label{Eq2Appex}
     \left[ iA_k^{(\G)}  , W \right]=  SD_{k,1}(W) + D_{k,0}(W) + D_{k,-1}(W)S^*,
\end{equation}
where 
\begin{align*}
    D_{k,1}(W)&:= \left(c_{k,1} +d_{k,1}X^{(\G)}\right)F_{k,1}(W), \\
    D_{k,0}(W)&:= d_{k,0}X^{(\G)}F_{k,0}(W),\\
    D_{k,-1}(W)&:= \left(c_{k,-1} +d_{k,-1}X^{(\G)}\right)F_{k,-1}(W),
\end{align*}
are multiplication operators on $\H_{\G}$. Here $c_{k,l}$ and $ d_{k,l}$ are constants that do not depend on $W$,  $F_{k,1}(W)= F_{k,-1}(W) := \tau^{-k}W-W,$ $F_{k,0}(W):=0$ for $k>0$, and $ F_{0,l}(W) :=B_{l}(W) $ are given by \eqref{Dzero1} to \eqref{Dzero3} for $l=-1,0,1$. Since $W \in \M_k(\G)$, there are $0< \beta < \gamma < \infty$ such that
\begin{equation}\label{Eq3Appex}
        \int_1^\infty \sup_{ \beta r < |n| <  \gamma r}  \| F_{k,l}W(n) \| \, dr < \infty, \quad l=-1,0,1.
    \end{equation}
By Lemma \ref{AppendixLemma}, this implies that  $D_{k,l}(W) \in \B\left(\H_\G\right)$ for all $l=-1, 0, 1$. Thus, the identity \eqref{Eq2Appex} can be extended to the whole space $\H_\G$. This shows that $W \in C^1 \left(A_k^{(\G)} \right)$. 

Now let $\Theta \in C^\infty(\R)$ with $\theta(x)>0$ if $\beta < x < \gamma$ and $\theta(x)=0$ otherwise. Then by \eqref{Eq3Appex} we have that
\begin{align*}
    \int_1^\infty \left\| \Theta \left( {\langle X^{(\G)} \rangle}/{r} \right) \left[ iA_k^{(\G)}  , W \right]  \right\| \, \frac{dr}{r} & \leq \sum_{l=-1}^1   \int_1^\infty \left\| \Theta \left( {\langle X^{(\G)} \rangle}/{r} \right) D_{k,l}(W)  \right\| \, \frac{dr}{r} \\
    & \leq \gamma \int_1^\infty \sup_{ \beta r < |n| <  \gamma r}  \frac{1}{n}  \left \| D_{k,l}(W)  (n) \right\|  \, dr <  \infty.
\end{align*}
Hence by Lemma \ref{AtoX}, we can apply \cite[Theorem 7.5.8]{abmg} or \cite[Theorem 6.1]{B-Sah2} to deduce that $\left[iA_k^{(\G)} , W\right] \in \Cc^{0,1}\left(A \right)$. By Remark \ref{RmkC01}, we conclude that $W \in \Cc^{1,1}\left(A_k^{(\G)} \right)$. 

Therefore, since $\mathcal{C}^{1,1}\left(A_k^{(\G)}\right)$ is a Banach $*$-subalgebra of $\B\left(\H_{\G}\right)$, the claimed result follows from Lemma \ref{shiftRegularity} and the preceding argument.
\end{proof}


\section{Proofs of the main results}\label{proofss}

In this section, we prove our main results. For each $k \in \Z_+$, we  let $A_k^{(\G)}$ be as in Section~\ref{admissiblesboth}.

\begin{proof}[Proof of Theorem \ref{main1}.]
According to Theorems \ref{MainAk}, \ref{MourreH0+} and \ref{MainAkUnilateral} we know that $H_0^{(\G)}$ is of class $ \mathcal{C}^{1,1}\left(A_k^{(\G)}\right)$,  $$ \mu_k\left(H_0^{(\G)}\right) = {\mu}_{A_k^{(\G)}}\left(H_0^{(\G)}\right)  \quad \mbox{and}\quad \kappa_k\left(H_0^{(\G)}\right) = \kappa_{A_k^{(\G)}}\left(H_0^{(\G)}\right), $$ 
where the sets on the left-hand side of these equalities are given in \eqref{mukThm2.1} and  \eqref{CriPointsAk}, while those on the right-hand side are given in \eqref{muthetaj} and \eqref{kthetaj}.
Furthermore, from Lemmas \ref{ClasC1Gap} and \ref{AdmissibleGap2}, we deduce that under Assumption \ref{A1}, $V \in \B\left( H_{\G} \right)$ is an admissible perturbation of $H_0^{(\G)}$ with respect to $A_k^{(\G)}$. 
Therefore, the given statements follow from Proposition \ref{MainMourrePerturbed} and Lemma~\ref{AtoX}. This finishes the proof.
\end{proof}

\begin{proof}[Proof of Theorem \ref{main2}.] According to Theorems   \ref{MainA0} and \ref{MainA0Unilateral}, $H^{(\G)}_0 \in \mathcal{C}^{1,1}\left(A_0^{(\G)}\right),$
$$(-2|a|,2|a|) =\tilde{\mu}_{A_0^{(\G)}}\left(H_0^{(\G)}\right) ={\mu}_{A_0^{(\G)}}\left(H_0^{(\G)}\right)  \quad \mbox{and} \quad  \{ \pm 2|a| \} = \kappa_{A_0^{(\G)}}\left(H_0^{(\G)}\right).$$

By Lemma \ref{AdmissibleGap2}, if $V$ satisfies Assumption \ref{A2}, then $V \in \B\left( H_ {\G} \right)$ is an admissible perturbation of $H_0^{(\G)}$ with respect to $A_0^{(\G)}$. Therefore, the stated results follow from Proposition \ref{MainMourrePerturbed} and Lemma \ref{AtoX}. This concludes the proof. 
\end{proof}


\section{Appendix}\label{Appendix}

The goal of this appendix is to show a technical lemma that allows us to relate the sets $\S(\G)$ and $\M_k(\G)$, defined in Section \ref{Perturbations}, with the classes of admissible perturbations for $H_0^{(\G)}$ with respect to the conjugate operators defined in  Section \ref{MourreEstimatesFreeModel}. We adopt the convention that the supremum over the empty set is zero.
\begin{lem}\label{AppendixLemma}
Let $(a_n)_{n \in \Z_+}$ be a sequence of non-negative real numbers. Assume that there exist   $0<\beta <\gamma <  \infty$ such that
\begin{equation*}\label{integral}
    L_{\beta,\gamma}:= \int_{\frac{1}{\gamma-\beta}}^\infty \sup_{ \beta r < n <  \gamma r} a_n  \, dr = \int_{1}^\infty \sup_{ \beta r < n <  \gamma r} a_n  \, dr < \infty.
\end{equation*}
Then 
\begin{itemize}
    \item[(i)] $L_{\mu, \nu}< \infty$ for all $0<\mu < \nu$.
    \item[(ii)] The sequence $(na_n)_{n \in \Z_+}$ is bounded.
    \item[(iii)] The series $\sum_{n=1}^\infty a_n$ converges.
\end{itemize}
\end{lem}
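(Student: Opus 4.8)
The plan is to work throughout with the single integrand $M(r):=\sup_{\beta r<n<\gamma r}a_n$, observing first that the precise lower limit of integration is immaterial for finiteness. Indeed, for $r\le 1/\gamma$ the window $(\beta r,\gamma r)$ is contained in $(0,1)$ and contains no integer $n\ge 1$, so $M(r)=0$; and on any bounded interval $[0,T]$ only the finitely many integers $n<\gamma T$ can lie in a window, so $M$ is bounded there. Hence $\int_0^\infty M$, $\int_1^\infty M$ and $\int_{1/(\gamma-\beta)}^\infty M$ are simultaneously finite, and I set $L:=\int_0^\infty M(r)\,dr<\infty$. I will use this $L$ in all three parts.

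For (ii) I would exploit that a fixed integer $n\ge 1$ satisfies $\beta r<n<\gamma r$ precisely for $r$ in the interval $J_n:=(n/\gamma,\,n/\beta)$, of length $n(\gamma-\beta)/(\beta\gamma)$. Since $M(r)\ge a_n$ for every $r\in J_n$ and $J_n\subseteq(0,\infty)$, monotonicity of the integral gives $a_n\,n(\gamma-\beta)/(\beta\gamma)\le\int_{J_n}M\le L$, whence $n\,a_n\le \beta\gamma\,L/(\gamma-\beta)$ for all $n$, so $(na_n)_n$ is bounded. For (i), given $0<\mu<\nu$, the idea is that the window $(\mu r,\nu r)$, of multiplicative width $\nu/\mu$, can be covered by a fixed number $K=K(\mu,\nu,\beta,\gamma)$ of windows $(\beta s_i,\gamma s_i)$ of width $\gamma/\beta$, at scales $s_i=c_i r$ with constants $c_i>0$ independent of $r$ (taking the covering windows to overlap slightly, so that every integer of $(\mu r,\nu r)$ lies in the open interior of some covering window). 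Subadditivity of the supremum gives $\sup_{\mu r<n<\nu r}a_n\le\sum_{i=1}^K M(c_i r)$, and integrating with the change of variables $s=c_i r$ yields $\int_0^\infty\sup_{\mu r<n<\nu r}a_n\,dr\le\sum_{i=1}^K c_i^{-1}\int_0^\infty M(s)\,ds=\big(\sum_{i=1}^K c_i^{-1}\big)L<\infty$, i.e. $L_{\mu,\nu}<\infty$.

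The substantive point is (iii): the hypothesis controls only the \emph{supremum} over each window, whereas the conclusion concerns the \emph{full sum}, and bridging this gap is the main obstacle. I would partition the integers $n\ge 1$ into geometric blocks $B_j:=\{\,n:\rho^{\,j}\le n<\rho^{\,j+1}\,\}$, $j\ge 0$, where $\rho:=\gamma/\beta>1$, and set $m_j:=\max_{n\in B_j}a_n=a_{n_j^{*}}$. Since $|B_j|\le \rho^{\,j}(\rho-1)+1\le\rho^{\,j+1}$, one has $\sum_{n\in B_j}a_n\le |B_j|\,m_j\le \rho^{\,j+1}m_j$. On the other hand, as $n_j^{*}\in B_j$ the interval $J_{n_j^{*}}$ has length $\ge \rho^{\,j}(\gamma-\beta)/(\beta\gamma)$ and $M\ge m_j$ on it, so $\int_{J_{n_j^{*}}}M\ge m_j\,\rho^{\,j}(\gamma-\beta)/(\beta\gamma)$. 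The key geometric observation is that the intervals $J_{n_j^{*}}$ overlap with multiplicity at most $2$: if some $r$ lay in both $J_{n_j^{*}}$ and $J_{n_{j+2}^{*}}$, then $n_{j+2}^{*}/n_j^{*}<\gamma r/(\beta r)=\rho$, contradicting $n_{j+2}^{*}\ge\rho^{\,j+2}>\rho\,n_j^{*}$ (because $n_j^{*}<\rho^{\,j+1}$).

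Summing the lower bounds over $j$ and using that each $r$ lies in at most two of the $J_{n_j^{*}}$ then gives
\[
\frac{\gamma-\beta}{\beta\gamma}\sum_{j\ge 0}\rho^{\,j}m_j\ \le\ \sum_{j\ge 0}\int_{J_{n_j^{*}}}M\ \le\ 2\int_0^\infty M\ =\ 2L .
\]
Combining this with the block bound yields $\sum_{n\ge 1}a_n=\sum_{j\ge 0}\sum_{n\in B_j}a_n\le \rho\sum_{j\ge 0}\rho^{\,j}m_j\le 2\rho\,\beta\gamma\,L/(\gamma-\beta)<\infty$, which is (iii). The only routine caveats to dispatch are the finitely many integers $n<\gamma$ in (ii) and the slight enlargement of the covering windows in (i) to absorb breakpoint integers; neither affects the displayed estimates.
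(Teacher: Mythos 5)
Your argument is correct, and it splits naturally into two comparisons. For parts (i) and (ii) you follow essentially the paper's own route: (ii) is the remark that $M(r):=\sup_{\beta r<n<\gamma r}a_n$ dominates $a_n$ on the interval $J_n=(n/\gamma,n/\beta)$, whose length grows linearly in $n$; and (i) covers the wide window $(\mu r,\nu r)$ by finitely many dilates of the basic window and then changes variables — the paper abuts its covering windows exactly and discards the null set of $r$ for which an integer sits at a breakpoint, where you instead let the windows overlap, and both devices work. Part (iii) is where you take a genuinely different, and in fact more robust, route. The paper picks for each $k$ a near-minimizing $r_k\in J_k$, so that $a_k\le \inf_{J_k}M+2^{-k}$, and then bounds $\sum_k a_k$ by $\sum_k\int_{J_k}M\,dr+1$; but the full family $(J_k)_{k}$ has unbounded overlap multiplicity (roughly $(\gamma-\beta)r$ of these intervals contain a given point $r$), so that last sum need not be finite under the hypothesis: for $a_n=n^{-1}(\log n)^{-2}$ with $\beta=1$, $\gamma=2$ one has $\int_{J_k}M\,dr\gtrsim(\log k)^{-2}$, which is not summable, so the paper's final display only compares a finite quantity with an infinite one; repairing that argument requires retaining the normalization $|J_k|^{-1}\asymp 1/k$ and observing that $\sum_{k:\,r\in J_k}k^{-1}=O(1)$ uniformly in $r$. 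Your blocking argument is engineered exactly to avoid this issue: by keeping only one representative interval $J_{n_j^{*}}$ per geometric block $B_j$ (skipping the empty blocks that occur when $\gamma/\beta$ is close to $1$), the selected intervals have overlap multiplicity at most $2$, so summing their integrals against $M$ costs only a factor $2$, and the cardinality bound $|B_j|\le\rho^{\,j+1}$ converts the resulting control of $\sum_j\rho^{\,j}m_j$ into convergence of the whole series, at the price of the harmless constant $2\rho\beta\gamma/(\gamma-\beta)$. In short: same mechanism as the paper for (i)--(ii), while for (iii) your bounded-overlap selection supplies precisely the summability that the paper's own summation step glosses over.
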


\begin{proof} 
We start by showing (i). Let $c:= \frac{\gamma}{\beta}$ and $0<\mu < \nu$. Suppose that $ \frac{\nu}{\mu}= c$. By the change of variables $t=\frac{\beta r}{\mu}, $ we get 
\begin{equation}\label{Eqcase1}
    L_{ \mu  , \nu} =  \int_{1}^\infty \sup_{ \mu t < n <  \nu t} a_n dt = \frac{\beta}{\mu} \int_{\frac{1}{\gamma-\beta}}^\infty \sup_{ \beta r < n <  \gamma r} a_n dr
= \frac{\beta}{\mu}  L_{\gamma, \beta}. 
\end{equation}
Now we consider the general case. Let $m \in  \Z_+$ be such that $\nu< \mu c^m $. Then 
\begin{align*}
    \sup_{ \mu t < n <  \nu t} a_n \leq \sup_{ \mu t < n <  \mu c^m  t} a_n \leq \sum_{k=0}^{m-1} \sup_{ \mu c^{k} t < n < \mu c^{k+1} t} a_n, 
\end{align*}
 provided that $t> \frac{1}{\nu-\mu}$  and $t \notin \left\{ \frac{n}{\mu c^{k+1} } : k \in \{0, \ldots, m-1\},  n \in \Z_+    \right\}.$ Combining this with \eqref{Eqcase1}, we conclude that $L_{\mu, \nu}< \infty$. 

To show (ii) and (iii), fix $K \in \Z_+$ such that $K>{\frac{\gamma}{\gamma-\beta}}$. Note that if $k \in \Z_+$, $k \geq K$ and  $ r \in \left( \frac{k}{\gamma} , \frac{k}{\beta} \right)$, then $\beta r < k <  \gamma r$. Hence
$$
\left( \frac{1}{\beta} - \frac{1}{\gamma} \right) k a_k \leq  \int_{\frac{k}{\gamma}}^{ \frac{k}{\beta}} \sup_{ \beta r < n <  \gamma r} a_n  \, dr \leq L_{\beta, \gamma},
$$

which proves (ii). 

Finally,  for all $k \in   \Z_+ $, with $k \geq K$, let $r_k \in \left( \frac{k}{\gamma} , \frac{k}{\beta} \right)$  be such that
$$ a_k \leq \sup_{ \beta r_k < n <  \gamma r_k} a_n \leq \inf \left\{ \sup_{ \beta r < n <  \gamma r} a_n : r \in  \left( \frac{k}{\gamma} , \frac{k}{\beta} \right) \right\} + \frac{1}{2^k}. $$
Then
$$ \sum_{k=K}^\infty a_k \leq   \sum_{k=K}^\infty \int_{\frac{k}{\gamma}}^{ \frac{k}{\beta}} \sup_{ \beta r < n <  \gamma r} a_n  \, dr  + \sum_{k=K}^\infty \frac{1}{2^k}.$$
This completes the proof.
\end{proof}


\bibliographystyle{plain}  

\phantomsection

\end{document}